\documentclass[a4paper,reqno]{amsart}
\addtolength{\textwidth}{2cm} %\addtolength{\hoffset}{-1cm}
\addtolength{\textheight}{2cm} \addtolength{\voffset}{-.5cm}
\mathsurround=1pt

\usepackage{dsfont}
\usepackage{newcent} 
\usepackage[T1]{fontenc}
\usepackage{amsmath,amssymb,amsfonts,amsthm}
\usepackage{mathtools}
\usepackage{newlfont}
\usepackage{fancyhdr,fancyvrb}
\usepackage{graphicx}
\usepackage{nextpage}
\usepackage[dvipsnames,usenames]{color}
\usepackage[colorlinks=true, linkcolor=blue, citecolor=ForestGreen]{hyperref}

\newcommand{\sfd}{\mathsf d}
\newcommand{\crd}{}

 %{1\!\mathrm{l}}

\newcommand{\N}{\mathbb{N}}
\newcommand{\R}[1]{\mathbb{R}^{#1}}

\newcommand{\bb}{\boldsymbol{b}}

\newcommand{\by}{\boldsymbol{y}}
\newcommand{\bY}{\mathbf{Y}}

\newcommand{\cF}{\mathcal F}

\newcommand{\cK}{\mathcal K}
\newcommand{\cL}{\mathcal L}

\newcommand{\cP}{\mathcal P}

\newcommand{\cS}{\mathcal S}
\newcommand{\cT}{\mathcal T}

\newcommand{\cW}{\mathcal W}

\newcommand{\eeta}{\boldsymbol\eta}

\newcommand{\BL}{\mathrm{BL}}

\newcommand{\de}{\mathrm d}

\newcommand{\spann}{\operatorname{span}}

\newcommand{\diam}{\operatorname{diam}}
\newcommand{\dist}[2]{\operatorname{dist}(#1,#2)}

\newcommand{\Lip}{\operatorname{Lip}}

\newcommand{\supp}{\operatorname{supp}}

\DeclareMathOperator*{\ev}{ev}

\newcommand{\blu}[1]{\textcolor[rgb]{0,0,1}{#1}}

\renewcommand{\div}{\operatorname{div}}
\renewcommand{\geq}{\geqslant}
\renewcommand{\leq}{\leqslant}

\newcommand{\average}{{\mathchoice {\kern1ex\vcenter{\hrule
height.4pt width 8pt depth0pt}
\kern-11pt} {\kern1ex\vcenter{\hrule height.4pt width 4.3pt
depth0pt} \kern-7pt} {} {} }}

\newcommand{\be}{\begin{equation}}
\newcommand{\ee}{\end{equation}}

\renewcommand{\vec}[2]{\left(\begin{array}{c}
                    #1 \\
                    #2
                    \end{array}\right)}

\mathchardef\emptyset="001F

%
% % % NOTE CON COLORI PERSONALIZZATI

 % {\arabic{enumi})}, {\alph{enumi})}, {\Alph{enumi})}

\numberwithin{equation}{section}

%\theoremstyle{definition}
%\begingroup
\newtheorem{defin}{Definition}[section]
\newtheorem{remark}[defin]{Remark}
%\endgroup
%\theoremstyle{plain}
%\begingroup
\newtheorem{theorem}[defin]{Theorem}
\newtheorem{lemma}[defin]{Lemma}
\newtheorem{proposition}[defin]{Proposition}
\newtheorem{cor}[defin]{Corollary}
%\endgroup

%\renewcommand{\subsubsection}{\@startsection{subsubsection}{3}%
%  \z@{.5\linespacing\@plus.7\linespacing}{-.5em}%
%  {\normalfont\bfseries}}

\title[Mean-field analysis of multi-population dynamics with label switching]{Mean-field analysis of multi-population dynamics \\ with label switching}%
\author{}
\address{}
\email{}
\author{Marco Morandotti}
\address{Dipartimento di Scienze Matematiche ``G.~L.~Lagrange'', Politecnico di Torino, Corso Duca degli Abruzzi, 24, 10129 Torino, Italy}
\email[M.~Morandotti]{marco.morandotti@polito.it}
\author{Francesco Solombrino}
\address{Dipartimento di Matematica e Applicazioni ``Renato Caccioppoli'', Universit\`a degli Studi di Napoli Federico II, Via Cintia, Monte S.~Angelo, 80126 Napoli, Italy}
\email[F.~Solombrino]{francesco.solombrino@unina.it}

\date{\today}

\subjclass[2010]{35Q91, %  PDEs in connection with game theory, economics, social and behavioral sciences
(60J75, % View Publications (1973-now) Jump processes
37C10, % Vector fields, flows, ordinary differential equations
47J35,  %Nonlinear evolution equations
58D25)  %Equations in function spaces; evolution equations
	%Primary  
	%Primary %76Z10;	%Biopropulsion in water and in air
	%Secondary
	%Secondary %74F10,  	%Fluid-solid interactions (including aero- and hydro-elasticity, porosity, etc.)
	% 49J21,  	%Optimal control problems involving relations other than differential equations (sotto Calculus of variations and optimal control; optimization  - existence theories)
	% 93B05.  	%Controllability
}
\keywords{}%motion in viscous fluids, fluid-solid interaction, micro-swimmers, resistive force theory, controllability, optimal control.}

\makeindex

%%%%%%%%%%%%%%%%%%%%%%%%%%%%%%%%%%%%%%%%%%%%%%%%%%%%%%%%%
%%%%%%%%%%%%%%%%%%%%%%%%%%%%%%%%%%%%%%%%%%%%%%%%%%%%%%%%%
%%%%%%%%%%%%%%%%%%%%%%%%%%%%%%%%%%%%%%%%%%%%%%%%%%%%%%%%%
%%%%%%%%%%%%%%%%%%%%%%%%%%%%%%%%%%%%%%%%%%%%%%%%%%%%%%%%%
%%%%%%%%%%%%%%%%%%%%%%%%%%%%%%%%%%%%%%%%%%%%%%%%%%%%%%%%%

\begin{document}

\begin{abstract}
The mean-field analysis of a multi-population agent-based model is performed. The model couples a particle dynamics driven by a nonlocal velocity with a Markow-type jump process on the probability that each agent has of belonging to a given population. A general functional analytic framework for the well-posedness of the problem is established, and some concrete applications are presented, both in the case of discrete and continuous set of labels. In the particular case of a leader-follower dynamics, the existence and approximation results recently obtained in \cite{ABRS2018} are  recovered and generalized as a byproduct of the abstract approach proposed.
\end{abstract}

\maketitle

\noindent\textbf{Keywords}: kinetic equations, mean-field limits, continuity equations, jump processes, superposition principle.

\tableofcontents

%%%%%%%%%%%%%%%%%%%%%%%%%%%%%%%%%%%%%%%%%%%%%%%%%%%%%%%%%
%%%%%%%%%%%%%%%%%%%%%%%%%%%%%%%%%%%%%%%%%%%%%%%%%%%%%%%%%
%%%%%%%%%%%%%%%%%%%%%%%%%%%%%%%%%%%%%%%%%%%%%%%%%%%%%%%%%
%%%%%%%%%%%%%%%%%%%%%%%%%%%%%%%%%%%%%%%%%%%%%%%%%%%%%%%%%
%%%%%%%%%%%%%%%%%%%%%%%%%%%%%%%%%%%%%%%%%%%%%%%%%%%%%%%%%

\section{Introduction} \label{sec:intro}
The concept of mean-field interaction, originally used in statistical physics with Kac \cite{Kac} and then
McKean \cite{mckean1967propagation}  to describe the collisions between particles in a gas, has later
proved to be a powerful tool to analyze the asymptotic behavior of systems of interacting agents. Recent applications range from biological, social and economical phenomena \cite{albi2015invisible,parisi08,carrillo2014derivation} to automatic learning \cite{maggioni17, huang2018learning} and optimization heuristics \cite{dorigo2005ant,kennedy2011particle}. The underlying idea is that the collective behavior of large systems of particles (agent-based models, usually consisting of a set of ODE's) can be efficiently treated by replacing the influence of all the other individuals in the dynamics on a given agent by a single averaged effect. From a mathematical point of view, this amounts to passing from a particle description to a kinetic description, consisting of a limit PDE whose unknown is the particle density distribution in the state space. The well-posedness of such  models has therefore to be proven in spaces of measures (see, for instance, the results in \cite{CCR2011}).

In some of the applications, the interacting agents are assumed to belong to a number of different species, or populations \cite{albi2014boltzmann,albi2017opinion,cirant2015multi,francesco2013measure,fornasier2014mean}. This is a useful modeling assumption, \emph{e.g.}, in the theory of mean-field games, or in control theory, where it can been used to distinguish informed agents steering pedestrians to leave unknown environments,  or to highlight the influence of few key investors in the stock market (see the discussion in \cite[Introduction]{bongini2016optimal}). In a multi-population setting,  source (or sink) terms can be added to the model, to account for the ''birth'' and ''death'' rate of a single population (see for instance \cite[Sections 4-5]{Rossi-review}). Furthermore, even in the case where the  number of agents remains constant along the evolution, the possibility of having some individuals switching from one population to another has been considered \cite{thai}. 

As a matter of fact, exchange rates among populations are a common feature in several applications. 
For instance, in models of chemical reaction networks a particle may change its type as a result of the interaction with the others, at a stochastic rate which may also depend on its position \cite{LLN2019,CRN,O1989}.
Another relevant example comes from social dynamics, where transition rates appear in some kinetic models of opinion formation in the presence of strong leaders, as the one proposed in \cite{BMPW2009}, inspired by the earlier contribution \cite{Toscani2006}.
In this model, the opinion of the agents, described by their position in the state space, evolves because of the exchange of opinion with the others.
This is encoded by a nonlocal transport term taking into account the presence, among the overall population, of a restricted number of ``leaders'' promoting their opinion with a strong influence on the ``followers''.
It is natural to postulate, as the authors do (see \cite[Section~3.b]{BMPW2009} for a detailed discussion), that opinion leadership is not constant over time: someone who is an opinion leader today may lose this role tomorrow, or a follower may become a leader in the future.

Summarizing, the multi-population dynamics we are interested in attaches to an agent sitting at a position $x$ a probability measure $\lambda\in\cP(U)$, where $U$ is a compact space of labels accounting for the population to which the agent belongs.
In fact, we find it natural not to assume any deterministic knowledge of the label of a single agent, since transitions are  usually modeled as the outcome of a stochastic process. Hence, $\lambda$ expresses the probability that the agent has, at a certain moment in time, of belonging to a subset of $U$, and may itself evolve as a consequence of the interaction among the agents.
The minimal model for evolution that we propose\footnote{We neglect in the present contribution the possible presence of diffusion terms.} consists of the coupling of a non-local transport dynamics with a Markov-type jump process. 
We namely assume that
\begin{itemize}
\item a particle at position $x_i$ with probability distribution of labels $\lambda_1$ experiences a velocity field $\dot x_i=v_{\Psi^N}(x_i,\lambda_i)$ influenced by the global state of the system $\Psi^N$, the empirical measure $\frac1N\sum_{i=1}^N \delta_{(x_i,\lambda_i)}$;
\item the probability distribution of labels $\lambda_i$ evolves according to $\dot \lambda_i=\cT^*(x_i,\Psi^N)\lambda_i$, where the operator $\cT^*$, accounting for the transitions among the labels, is the adjoint of an operator $\cT(x,\Psi)$ defined on Lipschitz functions on $U$ and may depend on the position of the particle and on the state of the system.
\end{itemize}
In the case of two populations $U=\{F,L\}=\{\text{followers},\text{leaders}\}$, a model of this type has been recently analyzed in \cite{ABRS2018}, as a (partial) discrete counterpart of the PDE model in \cite{BMPW2009}.
Some simplifying assumptions had however to be added in order to perform a mean-field analysis.
In particular, the velocity field
\begin{subequations}\label{I11}
\begin{equation}\label{I1}
v_{\Psi^N}(x_i)= \frac{1}{N}\sum_{j=1}^N K^F(x_i-x_j)\lambda_j(\{F\})+\frac{1}{N}\sum_{j=1}^N K^L(x_i-x_j)\lambda_j(\{L\})
\end{equation}
did not depend on the probability on the labels $\lambda_i$ and the transition operators 
\begin{equation}\label{I2}
\begin{split}
(\cT^*(\Psi^N)\lambda_i)(\{F\})&=-\alpha_F (\Psi^N)\lambda_i(\{F\}) + \alpha_L(\Psi^N)(1-\lambda_i(\{F\}))\,,\\
(\cT^*(\Psi^N)\lambda_i)(\{L\})&=\alpha_F (\Psi^N)\lambda_i(\{F\}) - \alpha_L(\Psi^N)(1-\lambda_i(\{F\}))
\end{split}
\end{equation}
\end{subequations}
did not depend on the position $x_i$.
The goal of the present paper is to get rid of these simplifying assumptions, providing an \emph{appropriate functional setting} as well as a \emph{general set of assumptions} on the velocity field $v_\Psi$ and on the operator $\cT^*$ (see Section~\ref{sec:abstract_mod}) which allow one to perform a rigorous mean-field analysis of agent-based models of the kind discussed above.

In order to develop our analysis, we will borrow some functional-analytic and some measure-theoretic tools that have been recently specified for the different, although related, context of \emph{spatially inhomogeneous evolutionary games} in \cite{AFMS2018}.
The analogue of \eqref{I11} is described in \cite[Section~3.2]{AFMS2018}, where the dynamics considered is
\begin{equation}\label{I3}
     \left\{
    \begin{aligned}
      \dot x_{i}&=a(x_{i},\sigma_{i})=\int_U e(x_{i},u)\,\de\sigma_{i}(u),\\
      \dot \sigma_{i}&=
      \bigg(\frac1N\sum_{j=1}^N
      \Big(\int _U J(x_{i},\cdot,x_{j},u')\,\de\sigma_{j}(u')-
      \int_U\int_U
      J(x_{i},w,x_{j},u')\,\de\sigma_{j}(u')\,\de\sigma_{i}(w)\Big)\bigg)
      \sigma_{i},
    \end{aligned}
    \right.
\end{equation}
for $x\in\R{d}$ tracking the position of a player and $\sigma\in\cP(U)$ denoting their mixed strategy. 
In \eqref{I3}, the velocity $\dot x_i$ is determined by the $x$-dependent vector field $e(x,u)$ through an averaging process with respect to the strategies available to the player $x_i$; the evolution of the mixed strategy $\sigma_i$ is dictated by the \emph{replicator equation} (see \cite{HS1998}): the term in the parenthesis in the right-hand side of the second equation above determines the performance of the strategy played by player $x_i$ with respect to all of the available strategies, averaging out all of the possible distributions of the opponents $x_j$ with their mixed strategies $\sigma_j$.
In particular, $J(x_i,u,x_j,u')$ is the payoff of the game between player $x_j$ with strategy $u$ against player $x_j$ with strategy $u'$.
For the heuristic interpretation and the modeling issues related to \eqref{I3} we refer the interested reader to \cite{BorgersSarin,CH2017}.
%In principle, the the full dependence of the averaging process processes, described by $\int_U J(x_{i},w,x_{j},u')\,\de\sigma_{j}(u')$, on $\sigma_j$ may imply anticipation and a rather unrealistic knowledge of the full strategy of the opponent. Yet, it could emerge from a repetition of the evolutionary game (see \cite{BorgersSarin} for the spatially homogeneous case, and also \cite{CH2017} for the concept of \emph{learning} or \emph{fictitious play}).

The paper \cite{AFMS2018} introduced suitable notions (namely, Lagrangian and Eulerian) of solutions to the mean-field limit of \eqref{I3} showing convergence of the particle model to the limit description by means of stability estimates in the Wasserstein metric arising from the well-posedness theory of ODE's in Banach spaces \cite{Brezis,Cartan,Dieudonne}.

While sharing some common features, the dynamics described by \eqref{I11} and \eqref{I3} show a relevant difference in the structures of the velocity fields in the right-hand sides.
Indeed, in our setting, it is natural to postulate that the velocity of the agents is also depending on the behavior of the ones around them, at least in a small neighborhood. 
Such a feature is not present in the first equation in \eqref{I3}, whereas it is encoded in our model by considering a velocity field $v_\Psi$ depending on the global state of the system.
Beside this main distinction, also the right-hand sides of the equations for $\sigma_i$ and $\lambda_i$ present some differences; we refer the reader to Remark~\ref{differences} for a more detailed comparison of the two models.

The novel contribution of this work is in specifying the tools from \cite{AFMS2018} to the case of our particle system \eqref{I11}.
In doing this, we will pursue a more abstract point of view than the one considered in \cite{AFMS2018}, by focusing on the structural assumptions on the velocity field $v_\Psi$ and on the operator $\cT$ that guarantee the well-posedness of the mean-field analysis.
However, we will show in Sections~\ref{sec:LF} and~\ref{sec:other} that velocities and transition rates modeled by interaction kernels as those considered in \eqref{I11} and \eqref{I3} (in this last case, with the due difference that we pointed out before) also fit in the setting that we propose in Section~\ref{sec:abstract_mod}.
Thus, we can retrieve an even more general version of the results of \cite{ABRS2018} as a by-product of our analysis (see Theorem~\ref{T187}, where also an explicit dependence on the space variable $x$ of the transition rates is allowed).

The general assumptions on $v_\Psi$ and $\cT$ are presented at the beginning of Section~\ref{sec:abstract_mod}, see (v1)-(v3) and (T0)-(T3) below.
The dependence of $v_\Psi$ on the global status of the system call for a locally Lipschitz dependence both on the status variable $(x,\lambda)$ (in a suitable topology) and on $\Psi$ with respect to the Wasserstein distance between probability measures, which is customary for the mean-field analysis of transport equation with non-local vector fields, see
e.g. \cite{AG2008,PR2013,D79,golseparticle,villani}.
Furthermore, the sublinearity condition (v3) guarantees long-time existence of the solution.
Concerning the operator $\cT$, it must comply with analogous local Lipschitz continuity conditions and sublinearity conditions, see (T1)-(T2); in addition, we require that 
\begin{itemize}
\item constants belong to the kernel of $\cT(x,\Psi)$, namely $\cT(x,\Psi)1=0$;
\item there exists $\delta>0$ such that $\cT(x,\Psi)+\delta I\geq0$.
\end{itemize}
The two conditions above (see (T0) and (T3) below for a precise statement) ensure that the velocities $\dot\lambda_i$'s lie on the tangent plane to the probabilities on $\{F,L\}$ and that the positivity of the measures are preserved, respectively.
Altogether, conditions (v1)-(v3) and (T0)-(T3) allow us to show that the particle model, which we can rewrite in the form
\begin{equation}\label{I4}
\dot y_i=\vec{\dot x_i}{\dot\lambda_i}=\vec{v_{\Psi^N}(x_i,\lambda_i)}{\cT^*(x_i,\Psi^N)\lambda_i}\eqqcolon b_{\Psi^N}(y_i),
\end{equation}
is well-posed (see Proposition~\ref{T133}).

We subsequently show that in the limit as $N\to\infty$ the empirical measures $\Psi^N$ converge to a continuous path of probability measures $\Psi_t$  that solves the continuity-type equation
\begin{equation}\label{I5}
\partial_t\Psi_t+\div(b_{\Psi_t}\Psi_t)=0
\end{equation}
in the space of probability measures on the product space $Y=\R{d}\times\cP(U)$.

A key point of the proof is that the solutions to \eqref{I5}, which are defined via duality with test functions (Eulerian solutions, see Definition~\ref{T038}), can be equivalently characterized as Lagrangian solution satisfying the fixed-point equation
\begin{equation}\label{I6}
\Psi_t=\bY_\Psi(t,0,\cdot)_\# \overline\Psi \qquad\text{for every $t\geq0$,}
\end{equation}
where $\bY(t,0,\cdot)$ is the transition map (see Definition~\ref{T142}) associated with the ODE \eqref{I4} (with $\Psi$ in place of $\Psi^N$), $\overline\Psi$ is a given initial distribution of agents and probability on the labels, and the symbol $\#$ denotes the push-forward measure (see Definition~\ref{pushforward}).

In Section~\ref{sec:LF} we also retrieve and extend the main result of \cite{ABRS2018} as a special case of our analysis. 
We associate with a solution to \eqref{I5} the followers and leaders distributions %$\mu_\Psi^F$ and $\mu_\Psi^L$
\begin{equation}\label{I7}
\mu^F_\Psi(B):=\int_{B \times \cP(\{F, L\})} \lambda(\{F\}) \,\mathrm{d}\Psi(x, \lambda),\qquad \mu^L_\Psi(B):=\int_{B \times \cP(\{F, L\})} \lambda(\{L\})) \,\mathrm{d}\Psi(x, \lambda),
\end{equation}
for each Borel set $B \subset \mathbb{R}^d$. 
If the vector field $v_\Psi$ has the special structure \eqref{I1} and under suitable structural assumptions on the transition rates $\alpha_F$ and $\alpha_L$ (see \eqref{T177} below), using the definition of Eulerian solution, we are able to show that $\mu^F_\Psi$ and $\mu^L_\Psi$ are indeed the unique solutions to the system
\begin{align}\label{I8}
\left\{\begin{aligned}
\partial_t \mu^F_t& = -\mathrm{div}\big((K^{F}\star\mu^F_t + K^{L}\star\mu^L_t)\mu^F_t\big) - \alpha_F(x,\mu^F_t,\mu^L_t)\mu^F_t + \alpha_L(x, \mu^F_t,\mu^L_t)\mu^L_t,\\
\partial_t \mu^L_t& = -\mathrm{div}\big((K^{F}\star\mu^F_t + K^{L}\star\mu^L_t)\mu^L_t\big) +\alpha_F(x, \mu^F_t,\mu^L_t)\mu^F_t -\alpha_L(x, \mu^F_t,\mu^L_t)\mu^L_t
\end{aligned}\right.
\end{align}
considered in \cite{ABRS2018}.  Notice that, as discussed in Remark \ref{Tfin}, rewriting \eqref{T140} as a system of equation in $\mu^F$ and $\mu^L$ is not possible for a velocity field explicitly depending on $\lambda$ such as
\[
\begin{split}
v_{\Psi^N}(x_i, \lambda_i)=& \lambda_i(\{F\})\left(\frac{1}{N}\sum_{j=1}^N K^{FF}(x_i-x_j)\lambda_j(\{F\})+\frac{1}{N}\sum_{j=1}^N K^{LF}(x_i-x_j)\lambda_j(\{L\})\right)+\\
&\lambda_i(\{L\})\left(\frac{1}{N}\sum_{j=1}^N K^{FL}(x_i-x_j)\lambda_j(\{F\})+\frac{1}{N}\sum_{j=1}^N K^{LL}(x_i-x_j)\lambda_j(\{L\})\right)
\end{split}
\]
considered for instance in \cite{francesco2013measure}. In the presence of exchange rates, the correct mean-field description of the above particle system is given by \eqref{T140} in the product space $\mathbb{R}^d\times \cP_1(\{F, L\})$.

The paper is organized as follows. 
In Section~\ref{sec:prel}, we recall the basic notions of measure theory that will be needed in the sequel, and prove a corollary of a theorem by Brezis \cite[Sect.~I.3, Thm.~1.4, Cor.~1.1]{Brezis} on the well-posedness of ODE's in Banach spaces.
In Section~\ref{sec:abstract_mod} we introduce our abstract model and we prove our main result, Theorem~\ref{T150}, on the mean-field limit of the dynamics.
Section~\ref{sec:LF} is devoted to the special case of $U=\{F,L\}$, modeling the leader-follower dynamics. We apply the abstract results to this case and recover the results of \cite{ABRS2018}, extending them to the case of $x$-dependent transition rates.
Finally, Section~\ref{discreteU} we extend the results of Section~\ref{sec:LF} to any finite numbers of labels, whereas in Section~\ref{continuousU} we discuss some explicit examples of velocity fields $v_\Psi$ and transition operators $\cT(x,\Psi)$ which are encompassed by our setting in the case of a continuum of labels, and we compare them with those considered in \cite{AFMS2018}.

\section{Preliminaries} \label{sec:prel}
\subsection{Basic notation} 
If $(X,\sfd_X)$ is a metric space, we denote by $\mathcal{M}(X)$ the space of signed Borel measures in $X$ with finite total variation,
by $\mathcal{M}_+(X)$ and $\cP(X)$ the convex subsets of nonnegative measures and probability measures, respectively. 
The notation $\cP_c(X)$ will be used for measures having compact support in $X$. 
For $\mu \in\mathcal{M}(X)$, $|\mu|$ denotes the total variation of $\mu$.  
If we denote by  $C_0(X)$ the space of continuous functions vanishing at the boundary of $X$, and by $C_b(X)$ the space of bounded continuous functions, the weak$^*$ and narrow convergence in $\mathcal{M}(X)$ are defined by the convergence of the duality products
\[
\int_X \phi\,\mathrm{d}\mu_h \to \int_X \phi\,\mathrm{d}\mu\,,\qquad h \to \infty
\]
for each $\phi \in C_0(X)$ and $\phi \in C_b(X)$, respectively.

Whenever $X=\mathbb{R}^d$, $d\ge 1$, it remains understood that it is endowed with the Euclidean norm (and induced distance), which shall be simply denoted by $|\cdot|$. 

For a Lipschitz function $f\colon X\to\mathbb{R}$ we denote by
$$
\mathrm{Lip}(f)\coloneqq\sup_{x,y\in X \atop x\neq y}\frac{|f(x)-f(y)|}{\sfd_X(x,y)}
$$
the Lipschitz constant. The notations $\mathrm{Lip}(X)$ and $\mathrm{Lip}_b(X)$ will be used for the spaces of Lipschitz and bounded Lipschitz functions on $X$, respectively. Both are normed spaces with the norm $\lVert f\rVert \coloneqq \lVert f\rVert_\infty+ \mathrm{Lip}(f)$.

In a complete and separable metric space $(X,\sfd_X)$,  we shall use the Kantorovich-Rubinstein
 distance $\mathcal{W}_1%(\mu,\nu)
 $ in the class $\cP(X)$, defined as
$$
\mathcal{W}_1(\mu,\nu)\coloneqq\sup\bigg\{\int_X\varphi\,\mathrm{d}\mu-\int_X\varphi\,\mathrm{d}\nu: \varphi\in\mathrm{Lip}_b(X), \,\mathrm{Lip}(\varphi)\leq 1\bigg\}
$$
or equivalently (thanks to the Kantorovich duality) as
$$
\mathcal{W}_1(\mu,\nu)\coloneqq\inf\bigg\{\int_{X\times X} {\crd  \sfd_X(x,y)}\,\mathrm{d}\Pi(x,y): 
\Pi(A\times X)=\mu(A),\,\,\Pi(X\times B)=\nu(B)\bigg\},
$$
involving couplings $\Pi$ of $\mu$ and $\nu$. Notice that $\cW_1(\mu,\nu)$ is finite
if $\mu$ and $\nu$ belong to the space
\begin{equation*}
\cP_1(X)\coloneqq \bigg\{\mu\in\cP(X):  \text{$\int_X \sfd_X(x,\bar x)\,\mathrm{d}\mu(x)<+\infty$ for some $\bar x\in X$}\bigg\}
\end{equation*}
and that $(\cP_1(X),\mathcal{W}_1)$ is complete if $(X,\sfd_X)$ is complete.
For a positive measure $\mu \in\mathcal{M}_+(E)$, for $E$ being a Banach space, we define the first moment $m_1(\mu)$ as
\[
m_1(\mu)\coloneqq\int_{E} \lVert x \rVert_E\,\mathrm{d}\mu\,.
\]
Notice that, for a probability measure $\mu$, finiteness of the integral above is equivalent to $\mu \in \cP_1(E)$, whenever $E$ is endowed with the distance induced by the norm $\lVert\cdot  \rVert$.

We now recall the definition of push-forward measure.
\begin{defin}\label{pushforward}
Let  $\mu\in\mathcal{M}_+(X)$ and  $f  \colon X\to Z$ a
$\mu$-measurable function be given.   The push-forward measure $f_\#\mu\in\mathcal{M}_+(Z)$ is defined by $f_\#\mu(B)=\mu(f^{-1}(B))$ for any Borel set $B\subset Z$.
The push-forward measures has the same total mass as $\mu$, namely $\mu(X)=f_\#\mu(Z)$.
It also holds the change of variables formula
$$
\int_Z g\,\mathrm{d}f_\#\mu=\int_X g\circ f\,\mathrm{d}\mu
$$
whenever either one of the integrals makes sense.
\end{defin}

For $E$ being  a Banach space, the notation $C^1_b(E)$ will be used to denote the subspace of $C_b(E)$ of functions having bounded continuous Fr\'echet differential at each point. The notation $D\phi(\cdot)$ will be used to denote the Fr\'echet differential. In the case of a function $\phi \colon [0,T]\times E \to \mathbb{R}$, the simbol $\partial_t$ will be used to denote partial differentiation with respect to $t$, while $D$ will only stand for differentiation with respect to the variables in $E$.

\subsection{Functional setting} The space of labels $U$ will be assumed  to be a compact metric space.
The state of the system is described by $y\coloneqq (x,\lambda)\in \R{d}\times\cP(U)\eqqcolon Y$. 
The component $x\in\R{d}$ describes the location of an agent in space, whereas the component $\lambda\in\cP(U)$ describes the distribution of labels of the agent.
%(in a context where $U$ is the set of strategies, $\lambda\in\cP(U)$ denotes a \emph{mixed strategy}).
A probability distribution $\Psi\in\cP(Y)$ denotes a distribution of agents with labels.

To define the functional setting for the dynamics, we need to consider the \emph{free space}
\begin{equation}\label{P1}
\cF(U)\coloneqq \overline{\spann(\cP(U))}^{\lVert\cdot\rVert_{\BL}}\subset (\Lip(U))'\,,
\end{equation}
which is also called in the literature Arens-Eells space, see \cite{AP,AE1956} and \cite[Chapter~3]{Weaver2018}.
The closure in \eqref{P1} is taken with respect to the \emph{bounded Lipschitz} norm $\lVert\cdot\rVert_{\BL}$, which is defined, for $\mu\in(\Lip(U))'$, by
\begin{equation*}
\lVert \mu \rVert_{\BL}\coloneqq\sup \big\{\langle \mu,\varphi\rangle:  \varphi\in \Lip(U),  \|\varphi\|_{\Lip}\leq 1\big\}.
\end{equation*}
With the free space $\cF(U)$ at hand, we define $\overline Y\coloneqq \R{d}\times\cF(U)$ and the norm $\lVert\cdot\rVert_{\overline Y}$ by
\begin{equation}\label{P2}
\lVert y\rVert_{\overline Y}=\lVert(x,\lambda)\rVert_{\overline Y}\coloneqq \lvert x \rvert+\lVert \lambda \rVert_{\BL}\,;
\end{equation}
and we pose $\lVert y\rVert_Y=\lVert y\rVert_{\overline Y}$.

For a given $R>0$, we denote by $B_R$ the closed ball of radius $R$ in $\R{d}$ and by $B_R^Y$ the ball of radius $R$ in $Y$, namely $B_R^Y=\{y\in Y:\lVert y\rVert_{\overline Y}\leq R\}$, and observe that it is a compact set, since $Y$ is locally compact by our assumptions on $U$.
The Banach space structure of $\overline Y\supset Y$ allows us to define the first moment $m_1(\Psi)$ for a probability measure $\Psi \in \cP(Y)$ as
\[
m_1(\Psi)\coloneqq\int_Y \lVert y\rVert_{\overline Y} \,\mathrm{d}\Psi
\]
so that the space $\cP_1(Y)$ can be equivalently characterized as
$$\cP_1(Y)=\{\Psi\in\cP(Y): m_1(\Psi)<+\infty\}.$$
We will sometimes use the notation $\cP(K)$ to denote probability measures with support contained on a given compact subset $K\subset Y$. Notice that trivially we have $\cP(K)\subset \cP_1(Y)$.

\subsection{Well-posedness of ODE's in Banach spaces} We recall here a theorem by Brezis \cite[Sect.~I.3, Thm.~1.4, Cor.~1.1]{Brezis} on the well-posedness of ODE's in Banach spaces. 
\begin{theorem}\label{thm:Brezis}
Let $(E,\|\cdot\|_E)$ be a Banach space, $C$ a closed convex subset of $E$, and let $A(t,\cdot)\colon C\to E$, $t\in [0,T]$, be a family of operators satisfying the following properties: 
\begin{enumerate}
\item there exists a constant $L\geq 0$ such that for every $c_1,\,c_2\in C$ and $t\in [0,T]$
\begin{equation}\label{eq:15}
\|A(t,c_1)-A(t,c_2)\|_E\le L\|c_1-c_2\|_E;
\end{equation}
\item for every $c\in C$ the map $t\mapsto A(t,c)$ is continuous in $[0,T]$;
\item for every $R>0$ there exists $\theta>0$ such that
\begin{equation}\label{eq:16}
c\in C,\ \|c\|_E\le R\quad \Rightarrow\quad c+\theta A(t,c)\in C.
\end{equation}
\end{enumerate}
Then for every $\bar c\in C$ there exists a unique curve $c\colon[0,T]\to C$ of class $C^1$ satisfying $c_t\in C$ for all $t\in [0,T]$ and
\begin{equation}\label{eq:17}
\frac{\de}{\de t}c_t=A(t,c_t)\quad\text{in }[0,T],\qquad  c_0=\bar c.
\end{equation}
Moreover, if $c^1,\,c^2$ are the solutions starting from the initial data $\bar c^1,\,\bar c^2\in C$ respectively, we have
\begin{equation}\label{eq:18}
\|c^1_t-c^2_t\|_E\le \mathrm e^{Lt}\|\bar c^1-\bar c^2\|_E,\qquad\text{for every $t\in [0,T]$.}
\end{equation}
\end{theorem}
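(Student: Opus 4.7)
The statement falls naturally into three pieces: the stability inequality \eqref{eq:18} (which subsumes uniqueness), the existence of a $C^1$-curve in $C$ solving \eqref{eq:17}, and the upgrade of the constructed solution to $C^1$-regularity. I would treat them in this order, the first being essentially a Gronwall argument and the second requiring an Euler-type approximation scheme compatible with the invariance condition (iii).

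For \eqref{eq:18}, given two $C^1$-solutions $c^1, c^2$ of \eqref{eq:17}, rewriting $c^i_t = \bar c^i + \int_0^t A(s,c^i_s)\,\mathrm ds$ and invoking \eqref{eq:15} yields
\[
\|c^1_t - c^2_t\|_E \leq \|\bar c^1 - \bar c^2\|_E + L\int_0^t \|c^1_s - c^2_s\|_E\,\mathrm ds,
\]
so the integral Gronwall inequality produces \eqref{eq:18}; specializing to $\bar c^1 = \bar c^2$ gives uniqueness.

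For existence, I would build piecewise-affine approximations $c^n\colon [0,T]\to C$ by iterating the tangency condition (iii). Setting $t_0^n = 0$ and $c^n(0) = \bar c$, I would inductively select $\theta_k^n \in (0, \min\{1/n,T - t_k^n\}]$ such that $c^n(t_k^n) + \theta_k^n A(t_k^n, c^n(t_k^n))\in C$ (permissible by \eqref{eq:16} together with the convexity of $C$, which allows $\theta$ to be freely shrunk), define $t_{k+1}^n = t_k^n + \theta_k^n$, and extend $c^n$ affinely on $[t_k^n, t_{k+1}^n]$. Convexity of $C$ guarantees $c^n(t)\in C$ throughout. Uniform-in-$n$ bounds on $\|c^n\|_{C([0,T],E)}$ follow from the linear growth $\|A(t,c)\|_E \leq \|A(t,\bar c)\|_E + L\|c-\bar c\|_E$ (a consequence of \eqref{eq:15} together with the finiteness of $\sup_{[0,T]}\|A(t,\bar c)\|_E$ granted by (ii)) via a discrete Gronwall argument; these in turn provide a uniform lower bound on the admissible $\theta_k^n$, ensuring the recursion reaches $t=T$ in finitely many steps.

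Passing to the limit is then a Cauchy-sequence argument in $C([0,T],E)$. By comparing the recursions defining $c^n$ and $c^m$ using \eqref{eq:15}, the uniform continuity of $s\mapsto A(s,\bar c)$ on $[0,T]$ coming from (ii), and the uniform Lipschitz-in-$t$ regularity of $\{c^n\}$, one obtains an integral inequality of the form
\[
\|c^n(t) - c^m(t)\|_E \leq \omega(n,m) + L\int_0^t \|c^n(s) - c^m(s)\|_E\,\mathrm ds
\]
with $\omega(n,m)\to 0$, so Gronwall yields the Cauchy property. The limit $c$ lies in $C$ by closedness of $C$ and satisfies the integral form of \eqref{eq:17}; continuity of $A$ then upgrades $c$ to a $C^1$-map. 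The principal technical obstacle is securing a uniform lower bound on the Euler step sizes so that the construction terminates at $t=T$---this rests on the a priori boundedness of $c^n$, since \eqref{eq:16} by itself only provides some admissible $\theta$ without quantitative control.
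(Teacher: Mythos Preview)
The paper does not actually prove Theorem~\ref{thm:Brezis}: it is stated as a known result and attributed to Br\'ezis \cite[Sect.~I.3, Thm.~1.4, Cor.~1.1]{Brezis}, with no argument given. So there is no ``paper's own proof'' to compare against; the authors simply quote the statement and then prove their own Corollary~\ref{cor:Brezis} as a consequence.

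Your Euler-scheme sketch is a reasonable self-contained route to the result. The Gronwall part for \eqref{eq:18} is correct and standard. For existence, the explicit Euler iteration using condition~(iii) to keep the nodes in $C$, combined with convexity to interpolate and to shrink step sizes, is the natural constructive approach in this setting. You correctly identify the one nontrivial point: the a~priori bound on $\|c^n\|$ (via \eqref{eq:15}, assumption~(ii), and a discrete Gronwall) must be established \emph{along} the inductive construction, so that at each step the uniform $\theta$ from \eqref{eq:16} is already available and the recursion provably reaches $T$ in finitely many steps. As written your argument gestures at this but presents the bound and the lower bound on $\theta_k^n$ as sequential rather than simultaneous; to make it airtight you would phrase it as an induction on $k$ proving $\|c^n(t_k^n)\|_E \le R$ and $\theta_k^n \ge \min\{1/n,\theta(R)\}$ together. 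With that adjustment the sketch is sound.
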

For our purposes, we need the following generalization.
\begin{cor}\label{cor:Brezis}
Let hypotheses (ii) and (iii) of Theorem~\ref{thm:Brezis} hold for a family of operators $A(t,\cdot)\colon C\to E$, $t\in[0,T]$.
Assume, in addition that
\begin{enumerate}
\item[(i')] for every $R>0$ there exists a constant $L_R\geq 0$ such that for every $c_1,\,c_2\in C\cap B_R$ and $t\in [0,T]$
\begin{equation}\label{eq:151}
\|A(t,c_1)-A(t,c_2)\|_E\le L_R\|c_1-c_2\|_E;
\end{equation}
\item[(i'')] there exists $M>0$ such that for every $c\in C$, there holds 
\begin{equation}\label{eq:152}
\lVert A(t,c)\rVert_E\leq M(1+\lVert c\rVert_E).
\end{equation}
\end{enumerate}
Then for every $\bar c\in C$ there exists a unique curve $c\colon[0,T]\to C$ of class $C^1$ satisfying $c_t\in C$ for all $t\in [0,T]$ and
\begin{equation}\label{eq:171}
\frac{\de}{\de t}c_t=A(t,c_t)\quad\text{in }[0,T],\qquad  c_0=\bar c.
\end{equation}
Moreover, if $c^1,\,c^2$ are the solutions starting from the initial data $\bar c^1,\,\bar c^2\in C\cap B_R$ respectively, there exists a constant $L=L(M,R,T)>0$ such that
\begin{equation}\label{eq:181}
\|c^1_t-c^2_t\|_E\le \mathrm e^{Lt}\|\bar c^1-\bar c^2\|_E,\qquad\text{for every $t\in [0,T]$.}
\end{equation}
\end{cor}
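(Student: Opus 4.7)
\medskip\noindent\textbf{Proof plan for Corollary~\ref{cor:Brezis}.} The strategy is to combine the sublinear growth~(i'') with a radial cutoff of $A$ so as to reduce to the global Lipschitz setting of Theorem~\ref{thm:Brezis}, while checking that the a priori bound keeps the solution inside the ball where the cutoff is inactive. For any candidate $C^1$ solution $c_t$ starting from $\bar c$, the growth assumption~\eqref{eq:152} together with Gr\"onwall's inequality applied to
\[
\|c_t\|_E\le \|\bar c\|_E+\int_0^t M\bigl(1+\|c_s\|_E\bigr)\,\de s
\]
yields the a priori estimate $\|c_t\|_E\le (1+\|\bar c\|_E)\mathrm e^{MT}-1\eqqcolon R_0(\|\bar c\|_E)$, a bound that depends only on $M$, $T$, and $\|\bar c\|_E$.

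Fixing such $R_0$, I would then pick a smooth cutoff $\chi\colon[0,\infty)\to[0,1]$ with $\chi\equiv 1$ on $[0,R_0]$, $\chi\equiv 0$ on $[2R_0,\infty)$, and Lipschitz on the whole real line, and define the truncated operator
\[
\tilde A(t,c)\coloneqq \chi(\|c\|_E)\,A(t,c),\qquad c\in C,\ t\in[0,T].
\]
On the ball $B_{2R_0}$, assumption~(i') together with the bound $\|A(t,c)\|_E\le M(1+2R_0)$ coming from (i'') gives a global Lipschitz constant for $\tilde A$; outside $B_{2R_0}$ the operator vanishes, so the global Lipschitz estimate \eqref{eq:15} holds on the whole of $C$. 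Continuity in $t$ is inherited from (ii). For the tangentiality condition~\eqref{eq:16}, the key observation is the convex-combination identity
\[
c+\theta\tilde A(t,c)=\bigl(1-\chi(\|c\|_E)\bigr)\,c+\chi(\|c\|_E)\,\bigl(c+\theta A(t,c)\bigr);
\]
whenever $\chi(\|c\|_E)>0$ one has $\|c\|_E\le 2R_0$, and the hypothesis~(iii) applied with $R=2R_0$ furnishes a $\theta>0$, independent of $c$, such that $c+\theta A(t,c)\in C$, so the right-hand side is a convex combination of two elements of $C$ and hence belongs to $C$.

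At this point Theorem~\ref{thm:Brezis} applies to $\tilde A$ and produces, for every $\bar c\in C$, a unique $C^1$ curve $c\colon[0,T]\to C$ solving $\dot c_t=\tilde A(t,c_t)$ with $c_0=\bar c$. The same Gr\"onwall argument used above, applied now to the modified equation (and using that $\|\tilde A(t,c)\|_E\le\|A(t,c)\|_E$), ensures $\|c_t\|_E\le R_0$ for every $t\in[0,T]$, so $\chi(\|c_t\|_E)=1$ and $c$ actually solves~\eqref{eq:171}. Uniqueness for the original equation is obtained in a standard way: two solutions $c^1,c^2$ are continuous on $[0,T]$, hence uniformly bounded by some $R'$, and assumption~(i') on $B_{R'}$ combined with Gr\"onwall forces $c^1\equiv c^2$. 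For the stability estimate~\eqref{eq:181}, one considers $\bar c^1,\bar c^2\in C\cap B_R$, applies the whole construction with the uniform choice $R_0=R_0(R,M,T)=(1+R)\mathrm e^{MT}-1$, so that both solutions stay in $B_{R_0}$ where $\tilde A=A$; the Brezis stability bound~\eqref{eq:18} applied to $\tilde A$ then gives~\eqref{eq:181} with $L=L(M,R,T)$ equal to the global Lipschitz constant of $\tilde A$.

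The main delicate point is the verification of the tangential condition~(iii) for the truncated field $\tilde A$, because naive cutoffs destroy the invariance of $C$; the convexity of $C$ together with the radial, scalar nature of the cutoff $\chi$ is precisely what makes the convex-combination trick work and is what justifies the choice of a multiplicative (rather than additive) modification of $A$.
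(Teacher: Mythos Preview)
Your proposal is correct and follows essentially the same route as the paper: multiply $A$ by a radial cutoff $\chi(\|c\|_E)$ to obtain a globally Lipschitz field on $C$, apply Theorem~\ref{thm:Brezis}, and then use the sublinear growth (i'') together with Gr\"onwall to confine the solution to the region where the cutoff is inactive. Your treatment of the tangential condition via the convex-combination identity $c+\theta\tilde A=(1-\chi)c+\chi(c+\theta A)$ is exactly the content of the paper's one-line remark that $c+\theta\chi(\|c\|_E)A(t,c)\in C$ whenever $c+\theta A(t,c)\in C$, and your explicit a~priori constants differ from the paper's $\bar R=(\|\bar c\|_E+MT)\mathrm e^{MT}$ only cosmetically.
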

\begin{proof}
Let us fix the initial datum $\bar c\in C$, and let us choose $\bar R\coloneqq (\lVert \bar c\rVert_E+MT)e^{MT}$.
Consider a smooth function with compact support $\chi\colon \R+\to [0,1]$ such that $\chi(r)=1$ for every $r\leq \bar R$ and set $B(t,c)\coloneqq \chi(\lVert c\rVert_E)A(t,c)$. 
Then one can see that $B$ satisfies hypotheses (i) and (ii) of Theorem~\ref{thm:Brezis}.
To see that hypothesis (iii) is also satisfied, it suffices to notice that, by convexity and since $0\leq\chi\leq 1$, $c+\theta \chi(\lVert c\rVert_E)A(t,c)\in C$ whenever $c+\theta A(t,c)\in C$.
Therefore there exists a unique solution $t\mapsto c(t)$ of class $C^1$ of 
\begin{equation}\label{eq:182}
\frac{\de}{\de t} c_t=B(t,c_t)\quad\text{in }[0,T],\qquad  c_0=\bar c.
\end{equation}
Using again that $0\leq \chi\leq 1$ and \eqref{eq:152}, one can see that 
\begin{equation}\label{eq:183}
\lVert c_t\rVert_E\leq \lVert \bar c\rVert_E+MT+M\int_0^T \lVert c_s\rVert_E\,\de s,
\end{equation}
hence Gronwall's Lemma implies that $\lVert c_t\rVert_E\leq \bar R$ for every $t\in[0,T]$.
With this, $c_t$ solves \eqref{eq:171}. A similar argument shows that any other solution $t\mapsto \hat c_t$ to \eqref{eq:171} must satisfy $\lVert \hat c_t\rVert_E\leq \bar R$ for every $t\in[0,T]$. Thus, uniqueness of solutions for \eqref{eq:171} follows from the uniqueness of solutions to \eqref{eq:182}.
A similar argument also yields \eqref{eq:181}.
\end{proof}

\section{The abstract model} \label{sec:abstract_mod}
The state of our system is described pairs $y\coloneqq(x,\lambda)$. The element $x\in\R{d}$ denotes the position of the agents, whereas the element $\lambda\in\cP(U)$ denotes a (probability) distribution over the space %of labels 
$U$, which we assume to be a compact metric space, which can be interpreted as a space of strategies (as in \cite{AFMS2018}, in which case an element of $\cP(U)$ is a mixed strategy) or as a space of labels (as in \cite{ABRS2018}, where the case $U=\{\text{leader},\text{follower}\}$ was considered).
A distribution of states %players and their (distributions of) labels 
will be described by an element $\Lambda\in\cP(Y)$, where $Y\coloneqq \R{d}\times\cP(U)$. 
We will be concerned with the evolution of $\Lambda$, given an initial $\Lambda^0$, determined by the laws of evolution of $x$ and $\lambda$, which are going to be discussed below.

%It will be convenient for us to have the vector field driving the evolution defined on the linear space $\overline Y=\R{d}\times\cF(U)$.
%The reason will be apparent when discussing the uniqueness of the solution for the mean-field limit.
For $y=(x,\lambda)\in  Y$ and $\Psi\in\cP_1(Y)$, we define a vector field $b_\Psi\colon  Y\to\overline Y$ through
%For every continuous curve $[0,T]\ni t\mapsto \Lambda_t\in\cP_1(Y)$ we consider a time-dependent vector field $b\colon [0,T]\times Y\to \overline Y$ with the following structure
\begin{equation}\label{T034}
b_\Psi(y)\coloneqq \vec{v_\Psi(y)}{\cT^*(x,\Psi)\lambda}
%b(t,y)=b_{\Lambda_t}(y)\coloneqq %\int_C K(y,y')\,\de\Lambda_t(y'),
%\vec{v_{\Lambda_t}(y)}{\cT^*(x,\Lambda_t)\lambda},
\end{equation}
%where the first component drives the evolution of the positions of the agents and the second one governs the exchange among different labels. 
The first component of $b_\Psi$ is a velocity field in $\R{d}$ determined by the global state of the system $\Psi$; the second component is expressed in terms of the adjoint $\cT^*(x,\Psi)$ of an operator $\cT(x,\Psi)$ which sees the location of the agents and the global state of the system around them.
In order to state the regularity assumptions that we make on $b_\Psi$, we will discuss separately the assumptions on $v_\Psi$ and on $\cT$.

We assume that the velocity field $v_\Psi\colon  Y\to\R{d}$ satisfies the following conditions:
\begin{itemize}
%\item[(v0)] for every $\Psi\in\cP(Y)$, $y\mapsto v_\Psi(y)$ is of class $C^1$;
\item[(v1)] for every $R>0$, for every $\Psi\in\cP(B_R^Y)$, $v_\Psi\in\Lip(B_R^{Y};\R{d})$ uniformly with respect to $\Psi$, namely there exists a constant $L_{v,R}>0$ such that
\begin{equation}\label{T100}
\lvert v_\Psi(y^1)-v_\Psi(y^2)\rvert \leq L_{v,R}\lVert y^1-y^2 \rVert_{\overline Y}\,;
\end{equation}
\item[(v2)] for every $R>0$, for every $\Psi\in\cP(B_R^Y)$, there exists a constant $L_{v,R}>0$ such that for every $y\in B_R^{Y}$, and for every $\Psi^1,\Psi^2\in\cP(B_R^Y)$
\begin{equation}\label{T101}
\lvert v_{\Psi^1}(y)-v_{\Psi^2}(y)\rvert \leq L_{v,R}\cW_1(\Psi^1,\Psi^2);
\end{equation}
\item[(v3)] there exists $M_v>0$ such that for every $y\in Y$ and for every $\Psi\in \cP_1(Y)$ there holds
\begin{equation}\label{T102}
\lvert v_\Psi(y)\rvert \leq M_v \big(1+\lVert y \rVert_{\overline Y} + m_1(\Psi)\big). 
\end{equation}
\end{itemize}

We now describe the assumptions on $\cT$. %, which is an operator of the space $C(\overline U)$ of uniformly continuous functions on $U$.
For $(x,\Psi)\in\R{d}\times\cP_1(Y)$, let $\cT(x,\Psi)\colon \Lip(U)\to \Lip(U)$ be an operator such that 
\begin{itemize}
\item[(T0)] for every $(x,\Psi)\in\R{d}\times\cP_1(Y)$, constants are in the kernel of $\cT(x,\Psi)$, that is
\begin{equation}\label{T122}
\cT(x,\Psi)1=0;
\end{equation}
\item[(T1)] for every $(x,\Psi)\in \R{d}\times\cP_1(Y)$, there exists a constant $M_\cT>0$ such that the operator norm satisfies
\begin{equation}\label{T111}
\lVert \cT(x,\Psi)\rVert_{\cL(\Lip(U);\Lip(U))}\leq M_\cT\big(1+\lvert x\rvert+m_1(\Psi)\big);
\end{equation}
\item[(T2)] for every $R>0$ there exists $L_{\cT,R}>0$ such that, for every $(x^1,\Psi^1),(x^2,\Psi^2)\in B_R\times\cP(B_R^Y)$, 
\begin{equation}\label{T007}
\lVert \cT(x^1,\Psi^1)-\cT(x^2,\Psi^2)\rVert_{\cL(\Lip(U);\Lip(U))}\leq L_{\cT,R}\big(|x^1-x^2|+\cW_1(\Psi^1,\Psi^2)\big);
\end{equation}
\item[(T3)] for every $R>0$ there exists $\delta_R>0$ such that for every $(x,\Psi)\in B_R\times\cP_1(Y)$ we have
\begin{equation}\label{T118}
\cT(x,\Psi)+\delta_R I\geq0.
\end{equation}
\end{itemize}
The following lemma is easily proved.
\begin{lemma}\label{T020}
Let $\cT(x,\Psi)\colon \Lip(U)\to \Lip(U)$ satisfy (T0)-(T3) above and let $\cT^*(x,\Psi)\colon\cF(U)\to\cF(U)$ be its adjoint. 
Then
\begin{itemize}
\item[(T*1)] for every $(x,\Psi)\in \R{d}\times\cP_1(Y)$, there exists $M_\cT>0$ such that the operator norm satisfies
\begin{equation}\label{T120}
\lVert \cT^*(x,\Psi)\rVert_{\cL(\cF(U);\cF(U))}\leq M_\cT(1+\lvert x\rvert+m_1(\Psi));
\end{equation}
%for every $R>0$ there exists $\delta_R>0$ such that for all $(x,\Psi)\in B_R\times\cP(B_R^Y)$,
%$$\cT^*(x,\Psi)+\delta_R I\geq0;$$
\item[(T*2)] for every $R>0$ there exists $L_{\cT,R}>0$ such that, for every $(x^1,\Psi^1),(x^2,\Psi^2)\in B_R\times\cP(B_R^Y)$, 
\begin{equation}\label{T009}
\lVert \cT^*(x^1,\Psi^1)-\cT^*(x^2,\Psi^2)\rVert_{\cL(\cF(U);\cF(U))}\leq L_{\cT,R}\big(|x^1-x^2|+\cW_1(\Psi^1,\Psi^2)\big),
\end{equation}
\item[(T*3)] for every $R>0$ there exists $\delta_R>0$ such that 
\begin{equation}\label{T119}
\cT^*(x,\Psi)+\delta_R I\geq 0,
\end{equation}
%for all $t\in\R{}$ and $(x,\Lambda)\in\R{d}\times\cP(C)$,
%$$e^{tT^*(x,\Lambda)}\geq0;$$
%\item[(T*4)] if $\mu^0\in\cP(U)$, the solution $\mu_t$ to \eqref{T008} is in $\cP(U)$ to for all $t$.
%\end{itemize}
%Moreover, if $T(x,\Lambda)$ satisfies (T1'), then $T^*(x,\Lambda)$ satisfies
%\begin{itemize}
%\item[(T*1')] for every $K\subset\R{d}$, there exists $\lambda_K>0$ such that for all $(x,\Lambda)\in K\times\cP(C)$, 
%$$T^*(x,\Lambda)+\lambda_K I\geq0.$$
\end{itemize}
\end{lemma}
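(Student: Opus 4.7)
My plan is to derive each of (T*1)--(T*3) from the corresponding property of $\cT$ via elementary duality, the only genuine subtlety being to verify that $\cT^*(x,\Psi)$, defined \emph{a priori} as an operator on the whole dual $(\Lip(U))'$, actually restricts to a bounded operator from $\cF(U)$ into itself. This is where assumption (T0) plays a role beyond the cosmetic.

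First I would invoke the defining identity $\langle \cT^*(x,\Psi)\mu,\varphi\rangle=\langle \mu,\cT(x,\Psi)\varphi\rangle$, valid for $\mu\in(\Lip(U))'$ and $\varphi\in\Lip(U)$. Since the $\BL$-norm on $\cF(U)$ is the restriction to $\cF(U)$ of the dual norm on $(\Lip(U))'$, the standard two-sided supremum argument yields the equality of operator norms
\[
\lVert \cT^*(x,\Psi)\rVert_{\cL((\Lip(U))';(\Lip(U))')}=\lVert \cT(x,\Psi)\rVert_{\cL(\Lip(U);\Lip(U))}.
\]
Applied to $\cT(x,\Psi)$ itself and to the difference $\cT(x^1,\Psi^1)-\cT(x^2,\Psi^2)$, this identity produces, together with (T1) and (T2), the estimates (T*1) and (T*2) at the level of $(\Lip(U))'$. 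To transfer them to estimates on $\cF(U)$ one needs $\cT^*(x,\Psi)(\cF(U))\subset\cF(U)$. By $\BL$-continuity of $\cT^*(x,\Psi)$ and the very definition of $\cF(U)$ as the $\BL$-closure of $\spann(\cP(U))$, it is enough to check the inclusion on $\cP(U)$. For $\lambda\in\cP(U)$, a combination of (T0), (T3) and the Riesz representation theorem identifies $\cT^*(x,\Psi)\lambda$ with a finite signed Borel measure: indeed, $(\cT^*(x,\Psi)+\delta_R I)\lambda$ is a positive linear functional on $\Lip(U)$ by (T3), whose action on the constant function $1$ equals $\delta_R$ thanks to (T0). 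This makes it bounded for the sup norm on $\Lip(U)$, hence extendable to a positive Radon measure on $U$, so that $\cT^*(x,\Psi)\lambda\in \spann(\cM_+(U))\subset\spann(\cP(U))\subset\cF(U)$.

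For (T*3), I would read (T3) in its pointwise sense: whenever $\varphi\in\Lip(U)$ is nonnegative on $U$, so is $(\cT(x,\Psi)+\delta_R I)\varphi$. Dually, for every $\mu\in\cF(U)\cap\cM_+(U)$ and every nonnegative $\varphi\in\Lip(U)$,
\[
\langle(\cT^*(x,\Psi)+\delta_R I)\mu,\varphi\rangle=\langle \mu,(\cT(x,\Psi)+\delta_R I)\varphi\rangle\geq 0,
\]
which is exactly the statement that $(\cT^*(x,\Psi)+\delta_R I)\mu$ is a nonnegative measure, proving (T*3). No individual step is technically deep, and the main item requiring care is the self-mapping property $\cT^*(x,\Psi)(\cF(U))\subset\cF(U)$ used in (T*1)--(T*2); this is where one must combine (T0) with the Riesz representation theorem in order to fall back onto the familiar space of finite signed measures.
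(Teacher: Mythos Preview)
Your proof is correct and follows the same duality route as the paper: compute the operator norm of $\cT^*$ and of differences of $\cT^*$'s by unwinding the sup over $\mu$ and $\varphi$ and passing the operator to the other side of the pairing, then invoke (T1)--(T3). The paper's argument for (T*3) is the one-line observation that the adjoint of a positive operator is positive, which your pointwise duality computation makes explicit.

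The one genuine addition in your write-up is the verification that $\cT^*(x,\Psi)$ actually maps $\cF(U)$ into itself, which the paper simply asserts in the statement of the lemma without proof. Your argument---combining (T3) to get positivity of $(\cT^*(x,\Psi)+\delta_R I)\lambda$ as a functional on $\Lip(U)$, then (T0) to bound its value on constants, then positivity to get sup-norm continuity and hence a Riesz representation as a finite positive measure---is correct and is the natural way to close this gap. This is the only place where (T0) enters the proof, and the paper's version does not mention (T0) at all; so your treatment is in fact more complete on this point.
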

\begin{proof}
To see that (T*2) holds, we apply the definition of operator norm
\begin{equation}\label{T107}
\lVert \cT^*(x,\Psi)\rVert_{\cL(\cF(U);\cF(U))}\coloneqq \sup\big\{ \lVert \cT^*(x,\Psi)\mu\rVert_{\BL}: \mu\in\cF(U),\lVert\mu\rVert_{\BL}\leq1\big\}.
\end{equation}
For $(x^1,\Psi^1),(x^2,\Psi^2)\in B_R\times\cP(B_R^Y)$, we have,
\begin{equation*}
\begin{split}
\lVert \cT^* & (x^1,\Psi^1)- \cT^*(x^2,\Psi^2)\rVert_{\cL(\cF(U);\cF(U))}\\
=& \sup\big\{ \lVert (\cT^*(x^1,\Psi^1)-\cT^*(x^2,\Psi^2))\mu\rVert_{\BL}: \mu\in\cF(U),\lVert\mu\rVert_{\BL}\leq1\big\} \\
=& \sup\Big\{ \sup\big\{ \langle (\cT^*(x^1,\Psi^1)-\cT^*(x^2,\Psi^2))\mu, \varphi\rangle: \lVert\varphi\rVert_{\Lip}\leq1\big\} : \mu\in\cF(U),\lVert\mu\rVert_{\BL}\leq1\Big\} \\
=& \sup\Big\{ \sup\big\{ \langle \mu, (\cT(x^1,\Psi^1)-\cT(x^2,\Psi^2))\varphi\rangle:  \lVert\varphi\rVert_{\Lip}\leq1\big\} : \mu\in\cF(U),\lVert\mu\rVert_{\BL}\leq1\Big\} \\
\leq& \sup\Big\{ \sup\big\{ \lVert\mu\rVert_{\BL} \lVert(\cT(x^1,\Psi^1)-\cT(x^2,\Psi^2))\varphi\rVert_{\Lip}:  \lVert\varphi\rVert_{\Lip}\leq1\big\} : \mu\in\cF(U),\lVert\mu\rVert_{\BL}\leq1\Big\} \\
\leq & \sup\big\{ \lVert(\cT(x^1,\Psi^1)-\cT(x^2,\Psi^2))\varphi\rVert_{\Lip}:  \lVert\varphi\rVert_{\Lip}\leq1\big\} \\
\leq & \lVert \cT(x^1,\Psi^1)-\cT(x^2,\Psi^2)\rVert_{\cL(\Lip(U);\Lip(U))}\leq L_{\cT,R}\big(|x^1-x^2|+\cW_1(\Psi^1,\Psi^2)\big),
\end{split}
\end{equation*}
and (T*2) follows from (T2). A similar argument using \eqref{T107} also gives (T*1).
To prove (T*3), ler $R>0$ be fixed and let $\delta_R>0$ be such that \eqref{T118} holds.
Then 
%\begin{equation}\label{T119}
$$
\cT^*(x,\Psi)+\delta_R I=(\cT(x,\Psi)+\delta_R I)^*\geq 0,
$$
%\end{equation}
since the adjoint operator preserves positivity.
\end{proof}
%\begin{remark}\label{T118}
%Let $R>0$ and denote by $I$ the identity operator. From \eqref{T111} we obtain that there exists $\delta_{R}>0$ such that for every $(x,\Psi)\in B_R\times\cP_1(Y)$ we have
%$\cT(x,\Psi)+\delta_R I\geq0$.
%Then 
%\begin{equation}\label{T119}
%\cT^*(x,\Psi)+\delta_R I=(\cT(x,\Psi)+\delta_R I)^*\geq 0,
%\end{equation}
%since the adjoint operator preserves positivity.
%\end{remark}
%
%To see that (T*3) holds, recall the well-known formula
%$$e^{tT^*(x,\Lambda)}=\lim_{n\to\infty} \bigg(I+\frac{tT^*(x,\Lambda)}{n}\bigg)^n;$$
%by (T*1) we have that for $n$ large enough $I+tT^*(x,\Lambda)/n\geq0$, so that also $(I+tT^*(x,\Lambda)/n)^n\geq0$, which gives (T*3).
%
%Finally, (T*4) is a consequence of (T0): indeed
%$$\frac{\de}{\de t}\mu_t(U)=\frac{\de}{\de t} \langle \mu_t,1 \rangle=\langle T^*(x,\Lambda)\mu_t,1 \rangle=\langle\mu,T(x,\Lambda)1\rangle=0,$$
%so that the measure of $U$ does not change in time; we conclude since $\mu_0(U)=\mu^0(U)=1$.
\begin{proposition}\label{T105}
For $y\in Y$ and $\Psi\in\cP_1(Y)$, define $b_\Psi(y)$ as in \eqref{T034}.
Assume that $v_\Psi\colon Y\to\R{d}$ satisfies (v1)-(v3) and $\cT(x,\Psi)\colon \Lip(U)\to\Lip(U)$ satisfies (T0)-(T3). Then
\begin{itemize}
\item[(i)] for every $R>0$, for every $\Psi\in \cP(B_R^Y)$, and for every $y^1,y^2\in B_R^Y$, there exists $L_R>0$ such that 
\begin{equation}\label{T106}
\lVert b_\Psi(y^1)-b_\Psi(y^2)\rVert_{\overline Y}\leq L_R \lVert y^1-y^2\rVert_{\overline Y}\,;
\end{equation}
\item[(ii)] for every $R>0$, for every $\Psi^1,\Psi^2\in \cP(B_R^Y)$, and for every $y\in B_R^Y$, there exists $L_R>0$ such that 
\begin{equation}\label{T117}
\lVert b_{\Psi^1}(y)-b_{\Psi^2}(y)\rVert_{\overline Y}\leq L_R \cW_1(\Psi^1,\Psi^2);
\end{equation}
\item[(iii)] for every $R>0$, there exists $\theta>0$ such that for every $y\in B_R^Y$ and for every $\Psi\in \cP(B_R^Y)$
\begin{equation}\label{T109}
y+\theta b_\Psi(y)\in Y;
\end{equation}
\item[(iv)] there exists $M>0$ such that for every $y\in Y$ and for every $\Psi\in \cP_1(Y)$ there holds
\begin{equation}\label{T110}
\lVert b_{\Psi}(y)\rVert_{\overline Y}\leq M \big(1+ \lVert y\rVert_{\overline Y} +m_1(\Psi)\big)\,.
\end{equation}
\end{itemize}
\end{proposition}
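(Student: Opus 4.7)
The proof of Proposition~\ref{T105} will essentially be a bookkeeping exercise: the two components of $b_\Psi(y)$ are controlled separately by the hypotheses on $v_\Psi$ and on $\cT$ (through its adjoint, via Lemma~\ref{T020}), and the norm on $\overline Y$ from \eqref{P2} is additive in the two slots, so each estimate reduces to a triangle inequality followed by one direct application of the assumptions. The only genuinely nontrivial item is (iii), where one has to verify that the label component $\lambda+\theta\cT^*(x,\Psi)\lambda$ remains a \emph{probability} measure on $U$; here both conditions (T0) and (T3) enter in an essential way.

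For (i), for $y^k=(x^k,\lambda^k)\in B_R^Y$ I would estimate $\lvert v_\Psi(y^1)-v_\Psi(y^2)\rvert$ directly by (v1) and bound the label component via the triangle inequality
\[
\lVert\cT^*(x^1,\Psi)\lambda^1-\cT^*(x^2,\Psi)\lambda^2\rVert_{\BL}\leq \lVert\cT^*(x^1,\Psi)\rVert\,\lVert\lambda^1-\lambda^2\rVert_{\BL}+\lVert\cT^*(x^1,\Psi)-\cT^*(x^2,\Psi)\rVert\,\lVert\lambda^2\rVert_{\BL},
\]
then use (T*1) together with $\lvert x^1\rvert\leq R$ and $m_1(\Psi)\leq R$ on the first term, and (T*2) on the second, noting $\lVert\lambda^2\rVert_{\BL}\leq 1$ since $\lambda^2\in\cP(U)$. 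Summing the bounds over the two components and absorbing constants into $L_R$ gives \eqref{T106}. Item (ii) is analogous: use (v2) on the first component, and apply (T*2) to the label component after writing $\cT^*(x,\Psi^1)\lambda-\cT^*(x,\Psi^2)\lambda$.

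For (iii), the statement $y+\theta b_\Psi(y)\in Y$ is trivial in the $\R{d}$-slot and amounts to requiring $\lambda+\theta\cT^*(x,\Psi)\lambda\in\cP(U)$. Total mass is preserved because by duality
\[
\langle \lambda+\theta\cT^*(x,\Psi)\lambda,\ind_U\rangle=\lambda(U)+\theta\langle\lambda,\cT(x,\Psi)\ind_U\rangle=1,
\]
where $\cT(x,\Psi)\ind_U=0$ by (T0). For positivity, fix $R>0$, let $\delta_R>0$ be given by (T*3), and take $\theta\leq 1/\delta_R$. Then
\[
\lambda+\theta\cT^*(x,\Psi)\lambda=\theta\bigl(\cT^*(x,\Psi)+\tfrac{1}{\theta}I\bigr)\lambda\geq \theta\bigl(\cT^*(x,\Psi)+\delta_R I\bigr)\lambda\geq 0,
\]
since $\lambda\geq 0$ and $\cT^*(x,\Psi)+\delta_R I$ maps nonnegative measures to nonnegative measures by (T*3). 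This is the only place where the modeling assumptions (T0) and (T3) become visible; I expect it to be the main conceptual step of the proposition, since all the other items are triangle-inequality estimates.

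Finally, for (iv), I would write
\[
\lVert b_\Psi(y)\rVert_{\overline Y}=\lvert v_\Psi(y)\rvert+\lVert \cT^*(x,\Psi)\lambda\rVert_{\BL}\leq M_v\bigl(1+\lVert y\rVert_{\overline Y}+m_1(\Psi)\bigr)+M_\cT\bigl(1+\lvert x\rvert+m_1(\Psi)\bigr)\lVert\lambda\rVert_{\BL},
\]
using (v3) and (T*1). Since $\lambda\in\cP(U)$ gives $\lVert\lambda\rVert_{\BL}\leq 1$ and $\lvert x\rvert\leq\lVert y\rVert_{\overline Y}$, the right-hand side is controlled by $M(1+\lVert y\rVert_{\overline Y}+m_1(\Psi))$ for $M\coloneqq M_v+M_\cT$, which is \eqref{T110}.
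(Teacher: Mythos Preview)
Your proof is correct and follows essentially the same approach as the paper: each item is handled componentwise, invoking (v1)--(v3) on the velocity slot and the adjoint estimates (T*1)--(T*3) from Lemma~\ref{T020} on the label slot, with the key conceptual step being (iii), where (T0) gives mass preservation and (T*3) gives positivity of $\lambda+\theta\cT^*(x,\Psi)\lambda$. The paper's own proof is terser (it simply fixes $\theta=1/\delta_R$ rather than allowing $\theta\leq 1/\delta_R$, and for (i) cites only (v1) and (T*2) without spelling out the triangle-inequality split that also uses (T*1)), but the argument is the same.
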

\begin{proof}
Property (i) is a consequence of \eqref{T100} and \eqref{T009}; property (ii) follows from \eqref{T101} and \eqref{T009}.
Observing that for all $y=(x,\lambda)\in Y$ we have that $\lVert \lambda\rVert_{\BL}\leq 1$, we obtain \eqref{T110} from \eqref{T102} and \eqref{T120}.
Finally, to prove (iii), since $\R{d}$ is convex, we simply have to show that for every $R>0$, there exists $\theta>0$ such that for every $y=(x,\lambda)\in B_R^Y$ and for every $\Psi\in \cP(B_R^Y)$
\begin{equation}\label{T121}
\lambda+\theta \cT^*(x,\Psi)\lambda\in \cP(U).
\end{equation}
From \eqref{T119}, we have that, for $\theta=1/\delta_R$, $\lambda+\theta \cT^*(x,\Psi)\lambda$ is a positive measure.
Since by \eqref{T122} $\langle \cT^*(x,\Psi)\lambda, 1\rangle=\langle \lambda, \cT(x,\Psi)1\rangle=0$, we get \eqref{T121} and we conclude the proof.
 \end{proof}

\subsection{The discrete problem and statement of the main result}\label{section:discrete}
We consider a particle system of $N$ agents evolving according to
\begin{equation}\label{T130}
\begin{cases}
\dot x_t^i = v_{\Lambda_t^N}(x_t^i,\lambda_t^i), \\
\dot \lambda_t^i = \cT^*(x_t^i,\Lambda_t^N)\lambda_t^i,
\end{cases}
\qquad\text{for $i=1,\ldots,N$, $t\in[0,T]$,}
\end{equation}
where $x^i\in\R{d}$, $\lambda^i\in\cP(U)$ for each $i\in\{1,\ldots,N\}$, and 
\begin{equation}\label{T131}
\Lambda_t^N\coloneqq \frac1N\sum_{i=1}^N \delta_{(x_t^i,\lambda_t^i)}
\end{equation}
is the empirical measure associated with the system.
Recalling the definition of $b$ in \eqref{T034}, the evolution \eqref{T130} can be written in compact form as
\begin{equation}\label{T132}
y_t^i=b_{\Lambda_t^N}(y_t^i), \qquad\text{for $i=1,\ldots,N$, $t\in[0,T]$.}
\end{equation}
We first discuss the well posedness of system \eqref{T130} for every choice of an initial datum $\bar y^i=(\bar x^i,\bar \lambda^i)$, for $i=1,\ldots,N$.
\begin{proposition}\label{T133}
Assume that for every $y\in Y$ and $\Psi\in\cP_1(Y)$ the velocity $v_\Psi\colon Y\to\R{d}$ satisfies (v1)-(v3) and the operator $\cT(x,\Psi)\colon \Lip(U)\to\Lip(U)$ satisfies (T0)-(T3). 
Then, for every choice of $\bar y^i\in Y$, $i=1,\ldots,N$, the system \eqref{T132} has a unique solution.
\end{proposition}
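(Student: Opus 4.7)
The plan is to reformulate \eqref{T132} as a single ODE in a product Banach space and then apply Corollary~\ref{cor:Brezis}. Set $E\coloneqq \overline Y^N$ endowed with the norm $\lVert (y^1,\dots,y^N)\rVert_E = \sum_{i=1}^N \lVert y^i\rVert_{\overline Y}$, and take as closed convex subset $C\coloneqq Y^N\subset E$. Define the (time-independent) operator $A\colon C\to E$ by
\[
A(y^1,\dots,y^N)\coloneqq \bigl(b_{\Lambda^N}(y^1),\dots,b_{\Lambda^N}(y^N)\bigr),\qquad \Lambda^N\coloneqq \frac1N\sum_{i=1}^N\delta_{y^i},
\]
so that \eqref{T132} becomes $\dot c_t=A(c_t)$ with initial datum $\bar c=(\bar y^1,\dots,\bar y^N)\in C$. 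The task then reduces to verifying the hypotheses (i'), (i''), (ii), (iii) of Corollary~\ref{cor:Brezis} for $A$.

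The bridge between the pointwise estimates of Proposition~\ref{T105} and the corresponding estimates for $A$ is the observation that the empirical measure is tightly controlled by the configuration: coupling particle-wise yields
\[
\cW_1(\Lambda^N_1,\Lambda^N_2)\leq \frac1N\sum_{i=1}^N\lVert y^i_1-y^i_2\rVert_{\overline Y}\leq \lVert c_1-c_2\rVert_E,
\]
and $m_1(\Lambda^N)=\lVert c\rVert_E/N$. Moreover, if $\lVert c\rVert_E\leq R$ then each $y^i\in B_R^Y$ and consequently $\Lambda^N\in\cP(B_R^Y)$. Hypothesis (ii) is trivial since $A$ does not depend on $t$. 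Hypothesis (i') is obtained by summing over $i$ the pointwise estimates \eqref{T106} and \eqref{T117}, applied with both pointwise arguments and empirical measures living in $B_R^Y$ (resp.\ $\cP(B_R^Y)$), and using the above Wasserstein bound. Hypothesis (i'') follows directly from \eqref{T110}, since
\[
\lVert A(c)\rVert_E\leq \sum_{i=1}^N M\bigl(1+\lVert y^i\rVert_{\overline Y}+m_1(\Lambda^N)\bigr)\leq MN+2M\lVert c\rVert_E\leq M_N(1+\lVert c\rVert_E),
\]
with $M_N$ depending on $N$ but not on $c$.

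Hypothesis (iii) is where one must be slightly careful: we need a single $\theta>0$, independent of the component $i$, such that $c+\theta A(c)\in C=Y^N$ whenever $\lVert c\rVert_E\leq R$. This is not a true obstacle, however: since $\lVert c\rVert_E\leq R$ implies $y^i\in B_R^Y$ for every $i$ and $\Lambda^N\in\cP(B_R^Y)$, the same $\theta=\theta(R)$ provided by \eqref{T109} works simultaneously for all components, so $y^i+\theta b_{\Lambda^N}(y^i)\in Y$ for each $i$, that is $c+\theta A(c)\in C$. Applying Corollary~\ref{cor:Brezis} to the initial datum $\bar c$ then yields the desired unique $C^1$ solution $t\mapsto c_t\in C$, whose components $y_t^i=(x_t^i,\lambda_t^i)$ provide the unique solution to \eqref{T132}.
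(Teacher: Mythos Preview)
Your proof is correct and follows essentially the same approach as the paper's: both reformulate the system as a single ODE in the product Banach space $\overline Y^N$ with $C=Y^N$, and verify the hypotheses of Corollary~\ref{cor:Brezis} via Proposition~\ref{T105}, using the particle-wise coupling bound on $\cW_1$ and the identity for $m_1(\Lambda^N)$. The only cosmetic difference is that the paper endows $\overline Y^N$ with the averaged norm $\frac1N\sum_i\lVert y^i\rVert_{\overline Y}$ while you use the unnormalized sum; this merely shifts some $N$-dependent factors into the constants $L_R$ and $M_N$, which is harmless since $N$ is fixed.
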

\begin{proof}
We introduce the vector-valued variable $\by\coloneqq(y^1,\ldots,y^N)\in Y^N\subset\overline Y^N$, which we endow with the norm
\begin{equation}\label{T136}
\lVert \by \rVert_{\overline Y^N}\coloneqq \frac1N\sum_{i=1}^N \lVert y^i\rVert_{\overline Y},
\end{equation}
and the associated empirical measure $\Lambda^N\coloneqq\frac1N \sum_{i=1}^N \delta_{y^i}$, which belongs to $\cP(B_R^Y)$ whenever $\by\in (B_R^Y)^N$.
Consider the map $\bb^N\colon Y^N\to\overline Y^N$ whose components are defined through 
\begin{equation}\label{T134}
b_i^N(\by)\coloneqq b_{\Lambda^N}(y^i).
\end{equation}
Then the Cauchy problem associated with \eqref{T132} can be written as
\begin{equation}\label{T135}
\begin{cases}
\dot \by_t=\bb^N(\by_t), \\
\by_0=\bar \by.
\end{cases}
\end{equation}
In order to apply Corollary~\ref{cor:Brezis} to the system above, we first notice that assumption (ii) is automatically satisfied since the system is autonomous. To check the other assumptions, we fix a ball $B_R^{Y^N}$ and notice that $B_R^{Y^N}\subset (B_{R}^Y)^N$.

Applying \eqref{T109} with $\Psi=\Lambda^N$ to each component $y^i$ of $\by$, we get that assumption (iii) of Corollary~\ref{cor:Brezis} is satisfied.

We now show that assumption (i') holds. Fix $\by_1,\by_2\in B_R^{Y^N}\subset (B_R^Y)^N$, and let $\Lambda_1^N,\Lambda_2^N$ be the associated empirical measures. 
Notice that 
\begin{equation}\label{T138}
\cW_1(\Lambda_1^N,\Lambda_2^N) \leq \frac1N\sum_{i=1}^N \lVert y_1^i - y_2^i \rVert_{\overline Y} = \lVert \by_1 - \by_2 \rVert_{\overline Y^N}.
\end{equation}
With this, by the triangle inequality, \eqref{T106}, and \eqref{T117}, we estimate
\begin{equation}\label{T137}
\begin{split}
\lVert \bb^N(\by_1)-\bb^N(\by_2)\rVert_{\overline Y^N} = & \frac1N \sum_{i=1}^N \lVert b_{\Lambda_1^N}(y_1^i) - b_{\Lambda_2^N}(y_2^i) \rVert_{\overline Y} \\
\leq & L_R \cW_1(\Lambda_1^N,\Lambda_2^N) + \frac{L_R}N\sum_{i=1}^N \lVert y_1^i - y_2^i \rVert_{\overline Y} \leq 2L_R \lVert \by_1 - \by_2 \rVert_{\overline Y^N},
\end{split}
\end{equation}
which yields \eqref{eq:151}. To see that also \eqref{eq:152}, and therefore assumption (i''), holds, we apply \eqref{T110} and obtain, upon noticing that $m_1(\Lambda^N)= \lVert \by\rVert_{\overline Y^N}$,
\begin{equation}\label{T139}
\lVert \bb^N(\by)\rVert_{\overline Y^N}=\frac1N \sum_{i=1}^N \lVert b_{\Lambda^N}(y^i)\rVert_{\overline Y} \leq \frac{M}{N} \sum_{i=1}^N \big(1+\lVert y^i\rVert_{\overline Y}+m_1(\Lambda^N)\big)= M\big(1+ 2\lVert \by\rVert_{\overline Y^N}\big),
\end{equation}
Therefore we can apply Corollary~\ref{cor:Brezis}, which proves the statement.
\end{proof}
We are now in a position to state the main result of our paper, concerning the mean-field limit as $N\to\infty$ of the solutions $(y_t^1,\ldots,y_t^N)$ to \eqref{T132}, or equivalently the limiting behavior of the associated empirical measures $\Lambda_t^N$.
In order to do so, we first need to recall the concept of Eulerian solution to the continuity equation.
\begin{defin}[Eulerian solution]\label{T038}
Let $\Lambda\in C^0([0,T];(\cP_1(Y),\cW_1))$ and let $\bar\Lambda\in\cP_c(Y)$ be a given initial datum.
We say that $\Lambda$ is a Eulerian solution to the initial value problem for the equation
\begin{equation}\label{T140}
\partial_t \Lambda_t+\div(b_{\Lambda_t}\Lambda_t)=0
\end{equation}
starting from $\bar\Lambda$ if and only if $\Lambda_0=\bar\Lambda$ and, for every $\phi\in C^1_b([0,T]\times \overline Y)$,
\begin{equation}\label{T037}
%\begin{split}
\int_Y \phi(t,y)\,\de\Lambda_t(y)- %& 
\int_Y \phi(0,y)\,\de\Lambda_0(y) %\\
=%& 
\int_0^t\int_Y \big(\partial_t\phi(s,y)+ D\phi(s,y)\cdot b_{\Lambda_s}(y)\big)\,\de\Lambda_s(y)\de s,
%\end{split}
\end{equation}
where $D\phi(s,y)$ is the Fr\'echet differential of $\phi$ in the $y$ variable. 
\end{defin}
The main result of our paper is the following theorem, stating the existence of a unique Eulerian solution to \eqref{T140} and its characterization as the mean-field limit of solutions to the discrete problem \eqref{T132}.
\begin{theorem}\label{T150}
Let  $r>0$ and $\bar\Lambda\in\cP(B_r^Y)$ be a given initial datum.
Then 
\begin{itemize}
\item[(i)] there exists a unique Eulerian solution $t\mapsto\Lambda_t$ to \eqref{T140} starting from $\bar\Lambda$;
\item[(ii)] if $\bar\Lambda^N=\frac1N\sum_{i=1}^N \delta_{\bar y^{N,i}}$ is a sequence of atomic measures in $\cP(B^Y_r)$ such that
$$\lim_{N\to\infty} \cW_1(\bar \Lambda^N,\bar\Lambda)=0$$
and, for fixed $N$, $\Lambda_t^N$ are the empirical measures associated with the unique solution to \eqref{T132} with initial datum $\bar y^{N,i}$, we have
$$\lim_{N\to\infty} \cW_1(\Lambda_t^N,\Lambda_t)=0\qquad\text{uniformly with respect to $t\in[0,T]$.}$$
\end{itemize}
\end{theorem}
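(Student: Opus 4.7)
The plan is to establish existence and uniqueness via a Lagrangian reformulation, following the standard strategy for mean-field limits of non-local transport equations. I would first derive a uniform a priori bound on the support: by the sublinearity estimate \eqref{T110} and Gronwall, any solution starting from $\bar\Lambda\in\cP(B_r^Y)$ must remain in $\cP(B_R^Y)$ for some $R=R(r,T)$. Working in the complete metric space
$$\cX:=\bigl\{\Psi\in C^0([0,T];(\cP_1(Y),\cW_1)) : \Psi_t\in\cP(B_R^Y), \ \Psi_0=\bar\Lambda\bigr\}$$
endowed with the uniform-in-time $\cW_1$ distance, for each $\Psi\in\cX$ Proposition~\ref{T105} shows that the time-dependent vector field $(t,y)\mapsto b_{\Psi_t}(y)$ meets all hypotheses of Corollary~\ref{cor:Brezis}, producing a unique $C^1$ flow $\bY_\Psi(t,s,\cdot)\colon Y\to Y$. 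I would then define the operator $\Gamma\colon\cX\to\cX$ by $\Gamma(\Psi)_t:=\bY_\Psi(t,0,\cdot)_\#\bar\Lambda$; the tangency condition \eqref{T109} ensures $\Gamma(\Psi)_t\in\cP(Y)$, the sublinearity bound keeps the support in $B_R^Y$, and time-continuity follows from \eqref{T110}.

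Next I would show $\Gamma$ admits a unique fixed point. Comparing two flows via \eqref{eq:181} combined with \eqref{T117} yields, for $y\in B_R^Y$,
$$\lVert \bY_{\Psi^1}(t,0,y)-\bY_{\Psi^2}(t,0,y)\rVert_{\overline Y}\leq C_R\int_0^t \cW_1(\Psi^1_s,\Psi^2_s)\,\de s,$$
by a standard Gronwall argument combining the Lipschitz bound on $b_\Psi$ in $y$ with the Lipschitz dependence on $\Psi$. Pushing forward $\bar\Lambda$ and using the Kantorovich--Rubinstein duality for $\cW_1$ produces a contraction estimate on a short time interval; iterating gives a unique fixed point $\Lambda\in\cX$.

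To obtain the Eulerian characterization (i), I would plug the change-of-variables formula of Definition~\ref{pushforward} into the left-hand side of \eqref{T037}, differentiate under the integral using the $C^1$ regularity of the flow and dominated convergence (justified by the compactness of the trajectories), and identify $\partial_t\bY_\Lambda=b_{\Lambda_t}(\bY_\Lambda)$; this shows the fixed point $\Lambda$ is a Eulerian solution. For uniqueness, given any Eulerian solution $\ti\Lambda$, a superposition-type argument adapted to $Y=\R{d}\times\cP(U)$ (which is locally compact, with trajectories remaining in the compact set $B_R^Y$, and with $b_\Psi$ locally Lipschitz uniformly in $\Psi\in\cP(B_R^Y)$) shows $\ti\Lambda_t=\bY_{\ti\Lambda}(t,0,\cdot)_\#\bar\Lambda$, whence $\ti\Lambda$ is also a fixed point of $\Gamma$ and must coincide with $\Lambda$.

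For the mean-field limit (ii), the key observation is that the empirical measure $\Lambda^N_t$ associated with the particle system \eqref{T132} is precisely $\bY_{\Lambda^N}(t,0,\cdot)_\#\bar\Lambda^N$, i.e.\ it solves the same Lagrangian fixed-point equation with initial datum $\bar\Lambda^N$. The same Gronwall comparison developed above, applied now between the curves $\Lambda^N$ and $\Lambda$, should yield
$$\sup_{t\in[0,T]}\cW_1(\Lambda^N_t,\Lambda_t)\leq C(T)\,\cW_1(\bar\Lambda^N,\bar\Lambda),$$
and the right-hand side vanishes by hypothesis. The step I expect to be the main obstacle is the uniqueness of Eulerian solutions: the state space $Y$ is infinite dimensional in the $\lambda$-component, so verifying that a generic distributional solution of \eqref{T140} is actually transported by the characteristic flow requires a careful application of the superposition principle in this non-standard setting; I expect that the compactness of $\cP(U)$, the support bound, and the uniform local Lipschitz estimates on $b_\Psi$ will allow a direct proof via a duality argument with test functions constructed from the backward flow, bypassing the fully general Ambrosio--Gigli--Savar\'e framework.
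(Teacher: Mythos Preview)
Your overall strategy is sound and yields the same conclusions, but it differs from the paper's in one structural respect and contains one genuine gap.

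\textbf{Different route for existence.} You construct the Lagrangian solution by a Banach fixed-point argument on the map $\Gamma(\Psi)_t=\bY_\Psi(t,0,\cdot)_\#\bar\Lambda$. The paper instead proves existence by \emph{finite-dimensional approximation}: it first establishes a stability estimate for Lagrangian solutions (your Gronwall comparison, their Step~1), then takes any sequence of atomic data $\bar\Lambda^N\to\bar\Lambda$, observes that the empirical measures $\Lambda^N_t$ are themselves Lagrangian solutions, and uses stability to show they form a Cauchy sequence whose limit is the desired solution. This has the pleasant feature that part~(ii) is obtained simultaneously with existence, rather than as a separate corollary. Your fixed-point route is perfectly legitimate, but be careful with the invariance $\Gamma(\cX)\subset\cX$: the bound \eqref{T110} contains the term $m_1(\Psi)$, so for a \emph{generic} $\Psi\in\cX$ with support in $B_R^Y$ the flow starting from $B_r^Y$ lands in a ball of radius $(r+M(1+R)T)e^{MT}$, which is $\le R$ only for short time. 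The paper's a~priori bound (their Lemma~\ref{T162}) exploits the self-consistency $m_1(\Lambda_t)\le \max_{y\in B_r^Y}\lVert\bY_\Lambda(t,0,y)\rVert$ available only at the fixed point, yielding $R=(r+MT)e^{2MT}$; you cannot simply import this $R$ into the definition of $\cX$. Your iteration on short subintervals fixes both the contraction and the invariance, but you should say so explicitly.

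\textbf{The gap.} Your proposed bypass of the superposition principle via backward-flow duality does not work as stated. The duality argument requires the backward flow $\bY_{\ti\Lambda}(T,t,\cdot)$ to be well defined, which in turn needs $b_{\ti\Lambda_t}$ to be Lipschitz uniformly in $t$; by \eqref{T106}--\eqref{T117} this holds only once you know $\ti\Lambda_t\in\cP(B_R^Y)$ for some fixed $R$. But an arbitrary Eulerian solution is, a~priori, only in $C^0([0,T];(\cP_1(Y),\cW_1))$ with compactly supported initial datum: propagation of compact support is exactly what must be proved, and your listed ingredients (compactness of $\cP(U)$, local Lipschitz bounds) do not give it directly. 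The paper handles this in Theorem~\ref{equivalence} by invoking the Banach-space superposition principle of \cite{AFMS2018}: it first represents $\ti\Lambda_t=(\ev_t)_\#\eeta$ with $\eeta$ concentrated on integral curves, then uses the sublinearity \eqref{T110} together with $m_1(\ti\Lambda_t)$ bounded to Gr\"onwall each curve into a fixed ball, and only \emph{then} applies the ODE uniqueness of Proposition~\ref{T143}. So the superposition principle is not being used to identify the flow---your duality argument could do that---but to obtain the compact support that makes the flow exist in the first place. You should either invoke the same principle or supply an independent argument for propagation of compact support of Eulerian solutions.
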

The proof of Theorem~\ref{T150} will be based on a fixed point argument and on the notion of Lagrangian solution, which are going to be introduced in the next subsection.

\subsection{Lagrangian solutions}
We start by proving an auxiliary well posedness result for and ODE in $Y$ of the form
\begin{equation}\label{T141}
\dot y_t=b_{\Psi_t}(y_t),\qquad y_0=\bar y,
\end{equation}
where $[0,T]\ni t\mapsto\Psi_t\in\cP_1(Y)$ is a given continuous curve and $\bar y\in Y$.
\begin{proposition}\label{T143}
Assume that for every $y\in Y$ and $\Psi\in\cP_1(Y)$ the velocity $v_\Psi\colon Y\to\R{d}$ satisfies (v1)-(v3) and the operator $\cT(x,\Psi)\colon \Lip(U)\to\Lip(U)$ satisfies (T0)-(T3). 
Let $\Psi\in C^0([0,T];(\cP_1(Y),\cW_1))$ and assume that there exists $R>0$ such that, in addition, $\Psi_t\in\cP(B_R^Y)$ for all $t\in[0,T]$.
Then, for every choice of $\bar y\in Y$, % and every given continuous curve $[0,T]\ni t\mapsto\Psi_t\in\cP_1(Y)$, 
the ODE \eqref{T141} has a unique solution.
\end{proposition}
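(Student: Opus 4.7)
The plan is to reduce the statement to a direct application of Corollary~\ref{cor:Brezis} in the Banach space $E = \overline Y = \R{d}\times\cF(U)$, with the closed convex set $C = Y = \R{d}\times\cP(U)$ and the time-dependent operator
\[
A(t,y) \coloneqq b_{\Psi_t}(y).
\]

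First I would briefly verify that $Y$ is a closed convex subset of $\overline Y$: convexity is immediate since $\cP(U)$ is convex, and closedness follows because $\cP(U)$ is weak${}^*$-closed in $\cF(U)$ under the $\BL$-norm (convergence in $\BL$ preserves total mass and nonnegativity, since $U$ is compact so both $\mathbf 1$ and indicators of closed sets can be approximated in the relevant duality). Then I would check each of the hypotheses of Corollary~\ref{cor:Brezis} in turn.

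For (i'), fix $R>0$ and $y^1,y^2\in B_R^Y$; since $\Psi_t\in\cP(B_R^Y)$ for every $t$ by assumption, Proposition~\ref{T105}(i) yields
\[
\lVert A(t,y^1)-A(t,y^2)\rVert_{\overline Y}=\lVert b_{\Psi_t}(y^1)-b_{\Psi_t}(y^2)\rVert_{\overline Y}\leq L_R\lVert y^1-y^2\rVert_{\overline Y},
\]
with $L_R$ independent of $t$. For (i''), Proposition~\ref{T105}(iv) together with the uniform bound $m_1(\Psi_t)\leq R$ gives
\[
\lVert A(t,y)\rVert_{\overline Y}\leq M\bigl(1+\lVert y\rVert_{\overline Y}+R\bigr)\leq M'\bigl(1+\lVert y\rVert_{\overline Y}\bigr),
\]
which is the required sublinear growth. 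For the continuity hypothesis (ii), I fix $y\in Y$, pick $R'$ so that $y\in B_{R'}^Y$ and $\Psi_t\in\cP(B_R^Y)$ for every $t$, set $\tilde R\coloneqq\max(R,R')$, and use Proposition~\ref{T105}(ii) to obtain
\[
\lVert A(t,y)-A(s,y)\rVert_{\overline Y}\leq L_{\tilde R}\,\cW_1(\Psi_t,\Psi_s),
\]
which tends to zero as $s\to t$ by the assumed continuity of $s\mapsto\Psi_s$ in $\cW_1$. Finally, the invariance condition (iii) is exactly the content of Proposition~\ref{T105}(iii): for every $R>0$ there exists $\theta>0$ such that $y+\theta\,b_{\Psi_t}(y)\in Y$ for all $y\in B_R^Y$ and all $t$, since $\Psi_t\in\cP(B_R^Y)$.

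All hypotheses of Corollary~\ref{cor:Brezis} being verified, existence and uniqueness of a $C^1$ curve $t\mapsto y_t\in Y$ solving \eqref{T141} follow at once. The only non-completely-routine step is the verification that the closed convex structure of $Y\subset\overline Y$ is preserved by the forward Euler step, but this is already packaged into Proposition~\ref{T105}(iii), so the argument is essentially an assembly of earlier pieces.
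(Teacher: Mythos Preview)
Your proof is correct and follows essentially the same route as the paper's own argument: both apply Corollary~\ref{cor:Brezis} with $E=\overline Y$, $C=Y$, and $A(t,y)=b_{\Psi_t}(y)$, verifying hypotheses (i'), (i''), (ii), (iii) via the four parts of Proposition~\ref{T105}. Your write-up is in fact slightly more detailed (you explicitly note the closedness and convexity of $Y$, and you handle the possible mismatch between the radius $R$ from the hypothesis and the radius needed in the Lipschitz estimate by passing to $\tilde R=\max(R,R')$), but the structure of the argument is identical.
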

\begin{proof}
We set $b(t,y)\coloneqq b_{\Psi_t}(y)$ according to \eqref{T034}.
Since $t\mapsto\Psi_t$ is continuous, using \eqref{T117} we get that, for any fixed $y\in Y$, $b(\cdot,y)$ is continuous, which is condition (ii) in Corollary~\ref{cor:Brezis}.
Condition (iii) of Corollary~\ref{cor:Brezis} is a direct consequence of \eqref{T109}.
Furthermore, \eqref{T106} and \eqref{T110}  yield \eqref{eq:151} and \eqref{eq:152}, respectively. 
The proof is concluded by Corollary~\ref{cor:Brezis}.
\end{proof}
In view of the previous result, the following definition is justified.
\begin{defin}[Transition map]\label{T142}
The transition map $\bY_\Psi(t,s,\bar y)$ associated with the ODE \eqref{T141}, replacing the initial condition by $y_s=\bar y$, is defined through 
$$\bY_\Psi(t,s,\bar y)=y_t,$$
where $t\mapsto y_t$ is the unique solution to \eqref{T141} .
\end{defin}
We can now proceed to defining the notion of Lagrangian solution to \eqref{T140}.
\begin{defin}[Lagrangian solutions]\label{T038}
Let $\Lambda\in C^0([0,T];(\cP_1(Y),\cW_1))$ and let $\bar\Lambda\in\cP_c(Y)$ be a given initial datum. 
We say that $\Lambda$ is a Lagrangian solution to the initial value problem for the equation \eqref{T140} starting from $\bar\Lambda$ if and only if it satisfies the fixed point condition
\begin{equation}\label{T039}
\Lambda_t=\bY_\Lambda(t,0,\cdot)_{\#} \bar\Lambda\qquad\text{for every $0\leq t\leq T$},
\end{equation}
where $\bY_\Lambda(t,s,y)$ are the transition maps associated with the ODE \eqref{T141}.
\end{defin}
\begin{remark}\label{T155}
It follows from Definition~\ref{pushforward} that Lagrangian solutions are also Eulerian solutions.
\end{remark}
\begin{remark}\label{156}
For a fixed $N\in\N$, let $\Lambda_t^N$ be the empirical measures associated with the unique solution to \eqref{T132} with initial datum $\bar y^i$, $i=1,\ldots,N$. If we now set $\bar\Lambda^N\coloneqq \frac1N\sum_{i=1}^N \delta_{\bar y^i}$, by Definition~\ref{T142} there holds
\begin{equation}\label{T157}
\Lambda_t^N=\bY_{\Lambda^N}(t,0,\cdot)_{\#} \bar\Lambda^N\qquad\text{for every $0\leq t\leq T$}.
\end{equation}
Hence, $\Lambda^N$ is a Lagrangian and Eulerian solution to \eqref{T140} starting from $\bar\Lambda^N$.
\end{remark}
We now want to show that an infinite-dimensional converse of Proposition~\ref{T143} holds, proving that indeed, in our case, every Eulerian solution is also a Lagrangian solution.
This stems out of a general abstract principle known as the \emph{superposition principle}, in the version introduced in \cite{AFMS2018} (see also \cite[Theorem 8.2.1]{AGS2008} and \cite[Theorem 7.1]{AT2014}). In the statement below, the {\it evaluation map} $\ev_t$ is defined, at a given $t \in [0, T]$, by
\[
\textstyle \ev_t(\gamma)\coloneqq \gamma(t)\quad \hbox{for all }\gamma \in C([0, T]: E)\,. 
\]
\begin{theorem}[superposition principle]\label{superposition}
Let $(E,\|\cdot\|_E)$ be a separable Banach space, let $b\colon (0,T)\times E\to E$ be a Borel vector field and 
let $\mu_t\in\cP(E)$, $t\in [0,T]$, be a continuous curve with
\begin{equation}\label{eq:prix}
\int_0^T \int_E \|b_t\|_E\,\de\mu_t\de t<+\infty.
\end{equation}
If
$$\frac{\de}{\de t}\mu_t+\mathrm{div}({ b_t}\mu_t)=0$$
in duality with cylindrical functions $\phi\in C^1_b(E)$, precisely of the form (here $\langle\cdot,\cdot\rangle$ denotes the
duality map between $E$ and $E'$)
$$
\varphi(\langle y,z_1'\rangle,\langle y,z_2'\rangle,\ldots,\langle y,z_N'\rangle)
$$
with $\varphi\in C^1_b(\R{N})$ and $z_1',\ldots,z_N'\in E'$, then there exists $\eeta\in\cP(C ([0,T];E))$
concentrated on absolutely continuous solutions to the ODE $\dot{y}=b_t(y)$
and with $(\ev_t)_\# \eeta=\mu_t$ for all $t\in [0,T]$.
\end{theorem}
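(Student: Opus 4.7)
The plan is to reduce to the finite-dimensional superposition principle of Ambrosio--Gigli--Savar\'e by projecting onto finite-dimensional subspaces, apply the classical theorem there, and then pass to the limit via a compactness/tightness argument. First I would fix a countable family $\{z_k'\}_{k\in\mathbb{N}}\subset E'$ which is total in the sense that it separates points of $E$ (this exists because $E$ is separable, hence so is its unit ball in the weak-$*$ topology). For each $N$, define the projection $\pi_N\colon E\to\mathbb{R}^N$ by $\pi_N(y)=(\langle y,z_1'\rangle,\ldots,\langle y,z_N'\rangle)$, set $\mu_t^N\coloneqq (\pi_N)_\#\mu_t$, and introduce the averaged vector field $b^N_t(z)\coloneqq \mathbb{E}_{\mu_t}\bigl[\pi_N(b_t(\cdot))\mid \pi_N=z\bigr]$, i.e.\ the disintegration-based conditional expectation of $\pi_N\circ b_t$ given $\pi_N$.

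Second, I would verify that $(\mu^N_t,b^N_t)$ satisfies the continuity equation on $\mathbb{R}^N$ in the usual sense. This is a direct consequence of testing the hypothesized cylindrical continuity equation against functions of the form $\varphi(\langle y,z_1'\rangle,\ldots,\langle y,z_N'\rangle)$, combined with the definition of conditional expectation. The integrability assumption \eqref{eq:prix} pushes down to $\int_0^T\!\!\int_{\mathbb{R}^N}|b^N_t|\,\de\mu^N_t\,\de t<+\infty$ by Jensen's inequality, so the finite-dimensional superposition principle (see \cite[Theorem 8.2.1]{AGS2008}) yields $\eeta^N\in\cP(C([0,T];\mathbb{R}^N))$ concentrated on absolutely continuous solutions to $\dot z=b^N_t(z)$ with $(\ev_t)_\#\eeta^N=\mu^N_t$.

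Third, I would lift the $\eeta^N$'s to measures on $C([0,T];E)$. One can build a joint measure $\widetilde\eeta^N\in\cP(C([0,T];E))$ whose projection under $(\pi_N)_*$ (acting pathwise) equals $\eeta^N$ and whose time marginals remain $\mu_t$ (this uses a measurable selection/disintegration of $\mu_t$ along the fibres of $\pi_N$ and the fact that each $\mu_t$ can be disintegrated over $\mu^N_t$). The key step is establishing tightness of $\{\widetilde\eeta^N\}$ in $\cP(C([0,T];E))$. For tightness of the time marginals $\mu_t$ we use that $[0,T]\ni t\mapsto\mu_t$ is continuous in $\cW_1$ hence pointwise tight; for equicontinuity of paths we exploit that curves supported by $\widetilde\eeta^N$ inherit the integral bound $\int_0^T\|b_t\|_E\,\de t$ in the sense of mean values from \eqref{eq:prix} (via Fubini and the bound on $b^N_t$), which gives an Arzel\`a--Ascoli-type compactness criterion in $C([0,T];E)$ after passing to a suitable equicontinuous subset of full $\widetilde\eeta^N$-measure.

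Finally, extract a narrowly converging subsequence $\widetilde\eeta^N\rightharpoonup\eeta\in\cP(C([0,T];E))$. The identity $(\ev_t)_\#\eeta=\mu_t$ passes to the limit by continuity of $\ev_t$, and one concludes by showing that $\eeta$-a.e.\ path $y(\cdot)$ solves $\dot y=b_t(y)$. The latter is the main obstacle: the nonlinear composition $b_t\circ y(\cdot)$ is not continuous in $y$ with respect to uniform convergence, so one must use a lower semicontinuity argument in the spirit of \cite[Theorem 7.1]{AT2014} and \cite{AFMS2018}. Concretely, one tests against cylindrical functionals of the trajectories, uses the fact that each $\widetilde\eeta^N$ is concentrated on integral curves of the averaged field $b^N_t$, and exploits that $b^N_t\to b_t$ in $L^1(\mu_t\otimes\de t)$ as $N\to\infty$ (by the martingale convergence theorem applied to the conditional expectations with respect to the increasing $\sigma$-algebras generated by $\pi_N$) to identify the limit as a genuine solution in $E$.
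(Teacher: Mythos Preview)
The paper does not actually prove this theorem: its entire proof reads ``See \cite[Theorem~5.2]{AFMS2018}.'' So there is no argument in the paper to compare against beyond that citation. Your sketch is, in broad outline, the strategy that \cite{AFMS2018} itself follows (reduction to the finite-dimensional superposition principle of \cite{AGS2008} via cylindrical projections, then a limit passage), so in that sense you are reconstructing the cited proof rather than proposing an alternative.

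That said, your Step~3 contains a genuine gap. You claim one can build $\widetilde\eeta^N\in\cP(C([0,T];E))$ whose pathwise $\pi_N$-pushforward is $\eeta^N$ \emph{and} whose time marginals are $\mu_t$. These two constraints are not jointly achievable by a simple disintegration, because a curve $z(\cdot)$ in $\mathbb{R}^N$ does not determine a curve in $E$, and gluing fibrewise over each time $t$ separately does not produce continuous paths in $E$. The standard argument bypasses this: one works with the projected path measures $\eeta^N$ on $C([0,T];\mathbb{R}^N)$, identifies $E$ (via the separating sequence $\{z'_k\}$) isometrically with a closed subspace of a sequence space such as $\ell^\infty$ or $\mathbb{R}^{\mathbb{N}}$ with a suitable metric, and then obtains tightness and a limit $\eeta$ directly on $C([0,T];\mathbb{R}^{\mathbb{N}})$, showing a posteriori that $\eeta$ is concentrated on curves with values in $E$. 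Your tightness discussion is also incomplete: in an infinite-dimensional $E$, Arzel\`a--Ascoli requires pointwise relative compactness, which equicontinuity plus the $L^1$ bound on $\|b_t\|$ alone do not provide; one must use the tightness of the single measures $\mu_t$ (inherited from narrow continuity) together with the equicontinuity estimate in a joint argument. These are exactly the points where the full proof in \cite{AFMS2018} does nontrivial work.
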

\begin{proof}
See \cite[Theorem~5.2]{AFMS2018}.
\end{proof}
Combining the abstract result Theorem~\ref{superposition} with the uniqueness granted by Proposition~\ref{T143}, we can prove the announced equivalence result. Notice that the proof has an intermediate step, since in order to apply Proposition~\ref{T143} we must first ensure that a Eulerian solution $\Lambda_t$ has (equi)compact support for all $t$. We are able to deduce this from Theorem~\ref{superposition} and the assumption that the initial datum $\bar\Lambda\in\cP_c(Y)$.
\begin{theorem}\label{equivalence}
Let $\Lambda\in C^0([0,T];(\cP_1(Y),\cW_1))$ and let $\bar\Lambda\in\cP_c(Y)$ be a given initial datum. 
Assume that $\Lambda$ is a Eulerian solution to the initial value problem for $\partial_t \Lambda_t+\div(b_{\Lambda_t}\Lambda_t)=0$ (see \eqref{T140}) starting from $\bar \Lambda$, in the sense of \eqref{T037}, then there exists $R>0$ such that $\Lambda_t\in\cP(B_R^Y)$ for all $t\in[0,T]$ and
\begin{equation*}%\label{T039}
\Lambda_t=\bY_\Lambda(t,0,\cdot)_{\#} \bar\Lambda\qquad\text{for every $0\leq t\leq T$},
\end{equation*}
where $\bY_\Lambda(t,s,y)$ are the transition maps associated with the ODE \eqref{T141}.
\end{theorem}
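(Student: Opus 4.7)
The plan is to deduce the fixed point identity from the superposition principle (Theorem~\ref{superposition}), after first lifting $\Lambda_t$ to a probability measure on trajectories in $\overline Y$, and then identifying the trajectories with the transition map via the uniqueness result of Proposition~\ref{T143}. The compact support property and the Lagrangian identity will then emerge together from the same disintegration.

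First I would verify the hypotheses of Theorem~\ref{superposition} applied to the separable Banach space $\overline Y = \R{d} \times \cF(U)$ (separability of $\cF(U)$ follows from separability of the compact metric space $U$). The continuity of $t \mapsto \Lambda_t$ in $(\cP_1(Y),\cW_1)$ entails continuity, hence boundedness, of the first moment, so $m_1(\Lambda_t) \leq C_1$ on $[0,T]$. Combined with the sublinearity bound \eqref{T110}, this produces the $L^1$-integrability condition \eqref{eq:prix}. Borel measurability of $(t,y) \mapsto b_{\Lambda_t}(y)$ follows from the Lipschitz estimates \eqref{T106}--\eqref{T117} and the $\cW_1$-continuity of $t \mapsto \Lambda_t$. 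Since the Eulerian formulation \eqref{T037} is tested against functions in $C^1_b([0,T]\times\overline Y)$, which in particular includes the cylindrical functions required in Theorem~\ref{superposition}, we obtain a measure $\eta \in \cP(C([0,T];\overline Y))$ concentrated on absolutely continuous solutions of $\dot\gamma = b_{\Lambda_t}(\gamma)$ with $(\ev_t)_\# \eta = \Lambda_t$.

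Next I would upgrade this concentration to obtain both the compact support of $\Lambda_t$ and the fact that $\eta$-a.e.\ curve remains in $Y$. Since $(\ev_0)_\# \eta = \bar\Lambda$ is supported in $B_r^Y$, for $\eta$-a.e.\ $\gamma$ one has $\gamma(0) \in B_r^Y$. Applying Gronwall's inequality to $\|\gamma(t)\|_{\overline Y}$ via \eqref{T110} and the moment bound $m_1(\Lambda_s) \leq C_1$ yields $\|\gamma(t)\|_{\overline Y} \leq R$ on $[0,T]$, with $R = R(r,T,C_1,M)$. Moreover, property (iii) of Proposition~\ref{T105} guarantees that the explicit Euler scheme for $\dot\gamma = b_{\Lambda_t}(\gamma)$ preserves the closed convex set $Y$, so a standard time-discretization argument shows $\gamma(t) \in Y$ for all $t$. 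Pushing forward by $\ev_t$ then gives $\Lambda_t \in \cP(B_R^Y)$ for every $t \in [0,T]$.

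Finally, with the support bound in hand, Proposition~\ref{T143} applies to the non-autonomous ODE $\dot y = b_{\Lambda_t}(y)$, providing a unique solution from each initial datum, which coincides with the transition map $\bY_\Lambda(t,0,\cdot)$. Hence for $\eta$-a.e.\ $\gamma$ we have $\gamma(t) = \bY_\Lambda(t,0,\gamma(0))$, and the identity $(\ev_t)_\# \eta = \Lambda_t$ rewrites as the Lagrangian formula $\Lambda_t = \bY_\Lambda(t,0,\cdot)_\# \bar\Lambda$. The main delicate point is the bootstrapping in the third step: the compact support of $\Lambda_t$ is what enables the application of Proposition~\ref{T143}, yet it is itself deduced from the disintegration given by the superposition principle, so the argument must be organized so that the moment bound supplies \eqref{eq:prix} \emph{first}, the disintegration is invoked \emph{next}, and only at the end does uniqueness identify the trajectories with the transition map.
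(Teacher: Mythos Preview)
Your proposal is correct and follows essentially the same route as the paper: verify \eqref{eq:prix} via the moment bound and \eqref{T110}, apply the superposition principle on $\overline Y$, use Gronwall on the resulting trajectories together with $\gamma(0)\in B_r^Y$ to obtain equicompact support, and then invoke Proposition~\ref{T143} to identify $\gamma(t)$ with $\bY_\Lambda(t,0,\gamma(0))$.

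One small imprecision: the Euler-scheme argument you invoke to show that the $\eta$-curves remain in $Y$ does not apply directly, since those curves are handed to you by the superposition principle rather than constructed by discretisation. The clean way to get $\gamma(t)\in Y$ for $\eta$-a.e.\ $\gamma$ is to use that $(\ev_t)_\#\eta=\Lambda_t\in\cP(Y)$ for every $t$, take a countable dense set of times, and then pass to all $t$ by continuity of $\gamma$ and closedness of $Y=\R{d}\times\cP(U)$ in $\overline Y$. The paper glosses over this same point, simply asserting $y(t)\in B_R^Y$ after the Gronwall step; your attempt to justify it is commendable, but the mechanism should be the one just described rather than time-discretisation.
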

\begin{proof}
Since $\Lambda\in C^0([0,T];(\cP_1(Y),\cW_1))$, the map
\begin{equation}
t\mapsto m_1(\Lambda_t)= \int_{Y} \lVert y \rVert_{\overline Y}\,\de\Lambda_t(y)
\end{equation}
is continuous, and hence bounded, in $[0,T]$.
Set $b_t(y)\coloneqq b_{\Lambda_t}(y)$, for $y\in Y$, and extend it to zero on $\overline Y\setminus Y$.
Using \eqref{T110}, we have
\begin{equation}\label{T161}
\begin{split}
\!\!\!\! \int_0^T \int_{\overline Y} & \lVert b_t\rVert_{\overline Y}\, \de\Lambda_t(y)\de t =  \int_0^T \int_{Y} \lVert b_t\rVert_{\overline Y}\,\de\Lambda_t(y)\de t \\
& \leq \int_0^T \int_{Y} M\big(1+ \lVert y \rVert_{\overline Y}+ m_1(\Lambda_t)\big)
\,\de\Lambda_t(y)\de t 
\leq  TM\Big(1+2\max_{t\in[0,T]} m_1(\Lambda_t)\Big) <+\infty.
\end{split}
\end{equation}
Hence, we can apply Theorem~\ref{superposition} with $E=\overline Y$ and $\mu_t=\Lambda_t$, obtaining that $\Lambda_t=(\ev_t)_\#\eeta$ for a suitable $\eeta\in\cP(C([0,T];\overline Y))$ concentrated on absolutely continuous solutions
to ODE
\begin{equation}\label{T141+}
\dot{y}=b_{\Lambda_t}(y)\qquad \text{in $[0,T]$.}
\end{equation}
Now, using \eqref{T110} again, we have
\begin{equation}\label{T162}
\lVert b_{\Lambda_t}(y) \rVert_{\overline Y} \le M\left(1+ \max_{t\in[0,T]}m_1(\Lambda_t)+ \lVert y \rVert_{\overline Y}\right) \le M_\Lambda \left(1+ \lVert y \rVert_{\overline Y}\right)\,, 
\end{equation}
where we set $M_\Lambda\coloneqq M\left(1+ \max_{t\in[0,T]}m_1(\Lambda_t)\right)$. The equality  $\bar\Lambda=\Lambda_0=(\ev_0)_\#\eeta$, which reads
\[
\int_{\cP(C([0,T];\overline Y))} \phi(\gamma(0))\,\mathrm{d}\eeta(\gamma)=\int_Y \phi(y)\,\mathrm{d}\bar\Lambda(y)
\]
for each $\phi \in C_b(Y)$, implies that $\eeta$ is concentrated on the set of solutions to \eqref{T141+} satisfying $y(0) \in B^Y_r$, where $r$ is such that $\mathrm{supp}\,\bar \Lambda \subset B^Y_r$. With \eqref{T162} and the Gr\"onwall inequality, each of these solutions must satisfy $y(t)\in  B^Y_{R}$, where $R$ is explictly given by
\[
R:=R_{r, M, \Lambda, T}=(r+M_\Lambda T)\mathrm{e}^{M_\Lambda T}\,.
\]
From the equality $\Lambda_t=(\ev_t)_\#\eeta$ we then deduce that $\Lambda_t \in \cP_1(B^Y_R)$ for all $t \in [0, T]$.
We can therefore apply Proposition~\ref{T143} and exploit the uniqueness of the solution to the Cauchy problem \eqref{T141} to deduce the representation
\[
\gamma(t)= \bY_\Lambda(t,0,\gamma(0))
\]
with $\gamma(0) \in   B^Y_r$, for each continuous path $\gamma \in \mathrm{supp}\,\eeta$. With the equality $\Lambda_t=(\ev_t)_\#\eeta$, this gives
\[
\int_Y \phi(y)\,\mathrm{d}\Lambda_t(y)= \int_{\cP(C([0,T];\overline Y))} \phi(\bY_\Lambda(t,0,\gamma(0)))\,\mathrm{d}\eeta(\gamma)=  \int_Y \phi(\bY_\Lambda(t,0,y))\,\mathrm{d}\bar\Lambda(y)
\]
for each $\phi \in C_b(Y)$, which implies the conclusion.
\end{proof}

\subsection{Proof of Theorem~\ref{T150}} As a preliminary step towards the proof, we need the following lemma, assuring that the size of the support of a Lagrangian solution in the sense of \eqref{T039} can be \emph{a priori} estimated from the data of the problem.
\begin{lemma}\label{T162}
Assume that for every $y\in Y$ and $\Psi\in\cP_1(Y)$ the velocity $v_\Psi\colon Y\to\R{d}$ satisfies (v1)-(v3) and the operator $\cT(x,\Psi)\colon \Lip(U)\to\Lip(U)$ satisfies (T0)-(T3).  Let $\Lambda\in C^0([0,T];(\cP_1(Y),\cW_1))$ and let $\bar\Lambda\in\cP_c(Y)$ be a given initial datum. Fix $r>0$ such that $\bar\Lambda$ has support in $B^Y_r$, and let $M$ be the constant given by \eqref{T110}.
Assume that $\Lambda$ is a Lagrangian solution to the initial value problem for the equation \eqref{T140} starting from $\bar\Lambda$ in the sense of \eqref{T039}. Then, for $R=(r+MT)\mathrm{e}^{2MT}$ we have
$$\Lambda_t \in \cP_1(B^Y_{R})\quad \hbox{for all }t\in [0, T]\,.$$
\end{lemma}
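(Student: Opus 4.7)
The plan is to exploit the Lagrangian representation $\Lambda_t = \bY_\Lambda(t,0,\cdot)_\# \bar\Lambda$ and derive a scalar integral inequality for the radius of $\supp \Lambda_t$, to be closed by Gronwall's lemma. Since $\bar\Lambda$ is concentrated on $B^Y_r$, the support of $\Lambda_t$ is the image of $\supp \bar\Lambda$ under the transition map, so it is enough to bound $\lVert \bY_\Lambda(t,0,\bar y)\rVert_{\overline Y}$ uniformly in $\bar y\in\supp \bar\Lambda$.

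First I would fix $\bar y \in \supp \bar\Lambda$ and integrate the ODE \eqref{T141} (with $\Psi = \Lambda$) satisfied by $y_t := \bY_\Lambda(t,0,\bar y)$. Invoking the sublinearity bound \eqref{T110} of Proposition~\ref{T105}(iv) yields
\begin{equation*}
\lVert y_t\rVert_{\overline Y} \le r + \int_0^t M\bigl(1 + \lVert y_s\rVert_{\overline Y} + m_1(\Lambda_s)\bigr)\,\de s.
\end{equation*}
I would then set $\phi(t) := \sup_{\bar y \in \supp\bar\Lambda} \lVert \bY_\Lambda(t,0,\bar y)\rVert_{\overline Y}$, which is finite for each $t$ by the Lipschitz dependence on initial data \eqref{eq:181} and the compactness of $\supp \bar\Lambda$, and observe that by the push-forward identity $\supp \Lambda_s$ is contained in the closed ball of radius $\phi(s)$, so that $m_1(\Lambda_s) \le \phi(s)$. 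Substituting this in and then taking the supremum over $\bar y \in \supp \bar\Lambda$ produces the autonomous integral inequality
\begin{equation*}
\phi(t) \le (r + MT) + 2M\int_0^t \phi(s)\,\de s,
\end{equation*}
to which Gronwall's lemma delivers $\phi(t) \le (r + MT)\mathrm{e}^{2Mt} \le R$ for every $t \in [0,T]$; hence $\supp \Lambda_t \subset B^Y_R$ and $\Lambda_t \in \cP_1(B^Y_R)$.

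The main conceptual point, and the only non-routine step, is handling the apparent circularity by which $m_1(\Lambda_s)$ on the right-hand side of the sublinearity bound is itself part of the unknown. The trick that unlocks Gronwall is to dominate both $\lVert y_s\rVert_{\overline Y}$ and $m_1(\Lambda_s)$ by the \emph{same} one-variable quantity $\phi(s)$, thereby converting the a priori coupled estimate into a standard scalar integral inequality; the factor $2$ in the exponent is precisely the price paid for the double use of $\phi$. Everything else—well-definedness and continuity of $\phi$, measurability of the sup, and the push-forward interpretation of $m_1(\Lambda_s)$—is covered by Proposition~\ref{T143} together with the continuous-dependence estimate from Corollary~\ref{cor:Brezis}.
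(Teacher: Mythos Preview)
Your proof is correct and follows essentially the same approach as the paper: define the radius function $\phi(t)=\sup_{\bar y}\lVert \bY_\Lambda(t,0,\bar y)\rVert_{\overline Y}$, use the Lagrangian representation to bound $m_1(\Lambda_s)\le \phi(s)$, plug this into the sublinearity estimate \eqref{T110}, and close by Gronwall. The only cosmetic differences are that the paper takes the supremum over all of $B^Y_r$ rather than just $\supp\bar\Lambda$, and keeps the sharper intermediate bound $r+M\int_0^t(1+2f(s))\,\de s$ before applying Gronwall, leading to $f(t)\le (r+Mt)\mathrm{e}^{2Mt}$; both variants are dominated by $R=(r+MT)\mathrm{e}^{2MT}$.
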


\begin{proof}
For $r$, $R$ as in the statement, it suffices to show that we have
\begin{equation}\label{T556}
\max_{y \in B_r^Y}\lVert \bY_{\Lambda^i} (t,0,y)\rVert_{\bar Y}\le R
\end{equation}
for all $t \in [0, T]$. Indeed, if this holds the statement immediately follows by \eqref{T039} and elementary properties of the push-forward measure, taking into account that $\bar\Lambda$ has support in $B^Y_r$.
To prove the above claim,  we first observe, that by definition of Lagrangian solutions and the fact that $\bar \Lambda \in \cP(B^Y_r)$ we immediately have
\begin{equation}\label{T564}
m_1(\Lambda_t)\le \max_{y \in B_r^Y}\lVert \bY_{\Lambda} (t,0,y)\rVert_{\bar Y}
\end{equation}
for all $t \in [0, T]$.  We now set $f(s)=\max_{y \in B_r^Y}\lVert \bY_{\Lambda} (s,0, y)\rVert_{\bar Y}$. Then, one has by definition  of the transition map, \eqref{T110}, and \eqref{T564} that for every choice of $y \in B^Y_r$
\[
\lVert \bY_{\Lambda} (t,0, y)\rVert_{\bar Y}\le r +M\int_0^t (1+\lVert \bY_{\Lambda} (s,0, y)\rVert_{\bar Y}+ m_1(\Lambda_s))\,\mathrm{d}s \le  r +M\int_0^t (1+2 f(s))\,\mathrm{d}s
\]
which implies by the Gronwall inequality $f(t)\le (r+Mt)\mathrm{e}^{2Mt}$ for all $t$, confirming \eqref{T556}.
\end{proof}

\begin{proof}[\it Proof of Theorem \ref{T150}]
The proof goes through a finite dimensional approximation and involves three steps.

{\it Step 1: stability of Lagrangian solutions}. We fix $r>0$, two initial data $\bar \Lambda^1$, $\bar \Lambda^2 \in \cP(B^Y_r)$ and assume that two Lagrangian solutions $\Lambda^1_t$, $\Lambda^2_t$, starting from $\bar \Lambda^1$,  and $\bar \Lambda^2$, respectively, exist. We fix $R=(r+MT)\mathrm{e}^{2MT}$ and the corresponding contant $L_R$ provided by \eqref{T106}-\eqref{T117}.
We claim that 
\begin{equation}\label{T163}
\mathcal{W}_1(\Lambda^1_t, \Lambda^2_t)\le \mathrm{e}^{L_R t+ \mathrm{e}^{L_R t}-1}\,\mathcal{W}_1(\bar \Lambda^1, \bar\Lambda^2)\quad\hbox{for all }t\in [0, T]\,.
\end{equation}
To prove this claim, we fix $y_1$ and $y_2 \in B^Y_r$ and observe that
\begin{equation}\label{T165}
\lVert \bY_{\Lambda^i} (t,0, y_i)\rVert_{\bar Y} \le R
\end{equation}
for all $t \in [0, T]$ and $i=1, 2$. This can be proved along similar lines as in the proof of \eqref{T564}. With \eqref{T165} and \eqref{T106}-\eqref{T117}, the solutions $y^1(t)$ and $y^2(t)$ to the ODE's $\dot y^i= b_{\Lambda^i}(y^i)$ with initial data $y^1$ and $y^2$ respectively, satisfy 
\begin{equation*}
\begin{split}
\frac{\mathrm{d}}{\mathrm{d} t}\lVert y^1-y^2\rVert(t)&\leq 
\lVert b_{\Lambda^1}(y^1(t))-b_{\Lambda^1}(y^2(t))\rVert+\lVert b_{\Lambda^1}(y^2(t))-b_{\Lambda^2}(y^2(t))\rVert
\\
&\leq
L_R\lVert y^1-y^2\rVert(t)+L_R \mathcal{W}_1(\Lambda_t^1,\Lambda_t^2)\,.
\end{split}
\end{equation*}
This gives, by means of a comparison argument, that 
\begin{equation*}
\lVert y^1(t)-y^2(t)\rVert\leq 
\mathrm e^{L_R t}\lVert y^1-y^2\rVert+L_R\int_0^t \mathrm e^{L_R(t-\tau)}\mathcal{W}_1(\Lambda_\tau^1,\Lambda_\tau^2)\,\mathrm{d}\tau\,;
\end{equation*}
equivalently,
\begin{equation}\label{T166}
\lVert\bY_{\Lambda^1} (t,0, y^1)-\bY_{\Lambda^2} (t,0, y^2)\rVert\leq 
\mathrm e^{L_R t}\lVert y^1-y^2\rVert+L_R\int_0^t \mathrm e^{L_R(t-\tau)}\mathcal{W}_1(\Lambda_\tau^1,\Lambda_\tau^2)\,\mathrm{d}\tau\,
\end{equation}
for alle $t \in [0, T]$ and $y_1$ and $y_2 \in B^Y_r$.

Now, let $\Pi$ be an optimal coupling between $\bar \Lambda^1$ and $\bar \Lambda^2$. Then clearly, by the definition of Lagrangian solutions,  $\left(\bY_{\Lambda^1}(t,0, y^1),\bY_{\Lambda^2}(t,0,y^2)\right)_\#{\Pi}$ 
 is a coupling between $\Lambda_t^1$ and $\Lambda_t^2$. Therefore 
\begin{equation*}
\begin{split}
\mathcal{W}_1(\Lambda_t^1,\Lambda_t^2)\leq{} & \int_{Y\times Y} \lVert\bY_{\Lambda^1}(t,0,y^1)-\bY_{\Lambda^2}(t,0,y^2)\rVert\,\mathrm{d}\Pi(y^1,y^2)\\
={}& \int_{B^Y_r\times B^Y_r} \lVert \bY_{\Lambda^1}(t,0,y^1)-\bY_{\Lambda^2}(t,0,y^2)\rVert\,\mathrm{d}\Pi(y^1,y^2)\,,
\end{split}
\end{equation*}
where we also used that $\bar \Lambda^1$, $\bar \Lambda^2 \in \cP(B^Y_r)$. Hence, using \eqref{T166} we get
\begin{equation*}
\begin{split}
\mathcal{W}_1(\Lambda_t^1,\Lambda_t^2)\leq{} & 
\mathrm e^{L_R t}\int_{B^Y_r\times B^Y_r} \lVert y^1-y^2\rVert\,\mathrm{d}\Pi(y^1,y^2)+ L_R\int_0^t\mathrm e^{L_R(t-\tau)}\mathcal{W}_1(\Lambda_{\tau}^1,\Lambda_{\tau}^2)\,\mathrm{d}\tau\\
={}& \mathrm e^{L_R t}\mathcal{W}_1(\bar \Lambda^1,\bar \Lambda^2)+L_R\int_0^t\mathrm e^{L_R(t-\tau)}\mathcal{W}_1(\Lambda_{\tau}^1,\Lambda_{\tau}^2)\,\mathrm{d}\tau\,.
\end{split}
\end{equation*}
With this and the Gr\"onwall inequality, we get \eqref{T163}.

{\it Step 2: existence and approximation of Lagrangian solutions}. We start by fixing a sequence of atomic measures $\bar \Lambda^N \in \cP(B^Y_r)$ such that 
\begin{equation}\label{T167}
\lim_{N\to \infty}\mathcal{W}_1(\bar \Lambda_N, \bar \Lambda)=0\,.
\end{equation}
Such a sequence can be for instance constructed as follows: choose $\bar y_i(\omega)\in Y$ independent and identically distributed, with law
$\bar\Lambda$, so that the random measures
$\bar\Lambda^N(\omega)\coloneqq\frac1N\sum_{i=1}^N\delta_{\bar y_i(\omega)}$ almost surely converge 
in $\cP_1(Y)$ to $\bar\Lambda$, and choose $\omega$ such that this happens. Now,  let $\Lambda_t^N$ be the empirical measures associated with the unique solution to \eqref{T132} with initial datum $\bar y^i$, $i=1,\ldots,N$. As noticed in  \eqref{T157}, $\Lambda_t^N$ are Lagrangian solutions to \eqref{T140} starting from  $\bar \Lambda^N$. Hence, \eqref{T163} provides a constant $C:=C(M, r, T)$ such that
\[
\mathcal{W}_1 (\Lambda_t^N, \Lambda_t^M)\le C \mathcal{W}_1(\bar \Lambda^N,  \bar\Lambda^M)
\] 
for all $t \in [0, T]$ and $N$, $M\in \mathbb{N}$. It follows that $\Lambda^N_t \in C([0, T]; (\cP_1(B^Y_R), \mathcal{W}_1))$ is a Cauchy sequence. Let then $\Lambda_t \in C([0, T]; (\cP_1(B^Y_R), \mathcal{W}_1))$ be the limit of the sequence  $\Lambda^N_t$. 

For a given $\bar y \in B^Y_r$, consider now the solutions $y^N(t)$ and $y(t)$ to the ODE's $\dot y^N= b_{\Lambda^N}(y^N)$, and $\dot y= b_{\Lambda}(y)$, respectively,  with initial datum $\bar y$. Let $R'\ge R$ be an upper bound\footnote{observe that at this point of the proof we cannot a priori exclude that $R'>R$, since we still do not know that $\Lambda_t$ is a Lagrangian solution, hence we cannot apply \eqref{T165} (which holds instead for $\Lambda^N$)} for $\max_{t\in [0, T]}\lVert y(t)\rVert_{\bar Y}$, which can be taken independent of $\bar y\in B^Y_r$. With  \eqref{T106}-\eqref{T117} we obtain
$$
\frac{\mathrm{d}}{\mathrm{d} t}\lVert y^N-y\rVert(t)\leq 
\lVert b_{\Lambda^N}(y^N)-b_{\Lambda}(y^N)\rVert+\lVert b_{\Lambda}(y^N)-b_{\Lambda}(y)\rVert\leq
L_{R'}\lVert y^N-y\rVert+L_{R} \mathcal{W}_1(\Lambda_t^N,\Lambda_t)\,.$$
Again by comparison, we deduce that
\[
\lVert\bY_{\Lambda^N} (t,0, \bar y)-\bY_{\Lambda} (t, 0, \bar y)\rVert\leq 
L_R\int_0^t \mathrm e^{L_{R'}(t-\tau)}\mathcal{W}_1(\Lambda_\tau^N,\Lambda_\tau)\,\mathrm{d}\tau\,
\]
which entails the uniform convergence of $\bY_{\Lambda^N} (\cdot, 0, \cdot)$ to $\bY_{\Lambda} (\cdot, 0, \cdot)$ in $[0, T]\times B^Y_r$. For each $t \in [0, T]$ this implies, together with \eqref{T167} and the fact that $\bY_{\Lambda} (t, 0, \cdot)$ is a Lipschitz map on $B_r^Y$, that
\[
\Lambda^N_t=\bY_{\Lambda^N}(t,0,\cdot)_{\#} \bar\Lambda^N \to \bY_{\Lambda}(t,0,\cdot)_{\#} \bar\Lambda
\]
in $\cP_1(Y)$, which gives \eqref{T039}.

{\it Step 3: uniqueness and conclusion}. Uniqueness of Lagrangian solutions, given the initial datum, follows now from \eqref{T163}. Existence and uniqueness of Eulerian solutions is now a consequence of Remark \ref{T155},  and Theorem \ref{equivalence}, respectively. The same argument used in the second step of this proof gives also part (ii) of the statement.
\end{proof}

\section{A leader-follower dynamics}\label{sec:LF}
We want to discuss the application of our results to the relevant scenario of two interacting populations, one consisting of ``leaders'' and the other one of ``followers'', with a switching rate between the two. In this setting, the set $U$ consists of two elements, that is $U:=\{F, L\}$ and is endowed with a two-valued distance
\[
0=\mathrm{dist }(F,F)=\mathrm{dist }(L,L)\,,\quad 1=\mathrm{dist }(F,L)=\mathrm{dist }(L,F)\,.
\]
The space $\mathrm{Lip}(\{F,L\})$ is a two-dimensional linear space spanned by the two indicator functions $\mathds{1}_F$ and $\mathds{1}_L$; accordingly, the space $\mathcal{F}(\{F,L\})$ is the two-dimensional space of signed Borel measures on the discrete set $\{F, L\}$, whose generic element $\xi$ is completely described by the two values $\xi_F\coloneqq\xi(\{F\})$ and $\xi_L\coloneqq\xi(\{L\})$.
A simple characterization of the operators $\mathcal{T}$ (and $\mathcal{T}^*$) complying with our set of assumptions (T0)-(T3) is then given in the following proposition.

\begin{proposition}\label{T175}
Let $U\coloneqq\{F, L\}$. Then $\mathcal{T}\colon \mathbb{R}^d \times \cP_1(Y)\to \mathrm{Lip}(\{F,L\})$ satisfies  (T0)-(T3)  if and only if there exist two functions $\alpha_F$, $\alpha_L \colon \mathbb{R}^d \times \cP_1(Y)\to [0, +\infty)$ such that
\begin{itemize}
\item[($\alpha$0)] for every $(x,\Psi)\in\R{d}\times\cP_1(Y)$ and $\lambda \in \cP_1(\{F,L\})$ it holds
\begin{equation*}
\begin{split}
(\cT^*(x,\Psi)\lambda)_F&=-\alpha_F (x,\Psi)\lambda_F + \alpha_L(x,\Psi)(1-\lambda_F)\,,\\
(\cT^*(x,\Psi)\lambda)_L&=\alpha_F (x,\Psi)\lambda_F - \alpha_L(x,\Psi)(1-\lambda_F);
\end{split}
\end{equation*}
\item[($\alpha$1)] for every $(x,\Psi)\in \R{d}\times\cP_1(Y)$, there exists $M_\cT>0$ such that the 
\begin{equation*}
%\begin{split}
%0\le \alpha_F(x,\Psi)\leq M_\cT\left(1+\lvert x\rvert+m_1(\Psi)\right)\,,\\
0\le \alpha_\bullet(x,\Psi)\leq M_\cT\big(1+\lvert x\rvert+m_1(\Psi)\big),\qquad\text{for $\bullet=F,L$;}
%\end{split}
\end{equation*}
\item[($\alpha$2)]for every $R>0$ there exists $L_{\cT,R}>0$ such that, for every $(x^1,\Psi^1),(x^2,\Psi^2)\in B_R\times\cP(B_R^Y)$, 
\begin{equation*}
%\begin{split}
%|\alpha_F(x^1,\Psi^1)-\alpha_F(x^2,\Psi^2)|\leq L_{\cT,R}\big(|x^1-x^2|+\cW_1(\Psi^1,\Psi^2)\big)\,,\\
|\alpha_\bullet(x^1,\Psi^1)-\alpha_\bullet(x^2,\Psi^2)|\leq L_{\cT,R}\big(|x^1-x^2|+\cW_1(\Psi^1,\Psi^2)\big),\qquad\text{for $\bullet=F,L$.}
%\end{split}
\end{equation*}
\end{itemize}
\end{proposition}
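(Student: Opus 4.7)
The plan is to exploit the finite-dimensionality of $\mathrm{Lip}(\{F,L\})$ to give an explicit matrix representation of $\mathcal{T}(x,\Psi)$, and then translate each of the conditions (T0)--(T3) into an equivalent statement on its matrix entries.

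First, I would write any $\varphi\in\mathrm{Lip}(\{F,L\})$ uniquely as $\varphi=\varphi(F)\mathds{1}_F+\varphi(L)\mathds{1}_L$, so $\mathcal{T}(x,\Psi)$ is entirely determined by the four numbers $(\mathcal{T}\mathds{1}_F)(F)$, $(\mathcal{T}\mathds{1}_F)(L)$, $(\mathcal{T}\mathds{1}_L)(F)$, $(\mathcal{T}\mathds{1}_L)(L)$. Condition (T0) applied to $1=\mathds{1}_F+\mathds{1}_L$ gives $\mathcal{T}\mathds{1}_L=-\mathcal{T}\mathds{1}_F$, so only two degrees of freedom remain; I would set
\[
\alpha_F(x,\Psi):=-(\mathcal{T}(x,\Psi)\mathds{1}_F)(F),\qquad \alpha_L(x,\Psi):=(\mathcal{T}(x,\Psi)\mathds{1}_F)(L).
\]
Writing out the adjoint pairing $\langle\mathcal{T}^*\lambda,\mathds{1}_\bullet\rangle=\langle\lambda,\mathcal{T}\mathds{1}_\bullet\rangle$ immediately yields
\[
(\mathcal{T}^*\lambda)_F=-\alpha_F\lambda_F+\alpha_L\lambda_L,\qquad (\mathcal{T}^*\lambda)_L=\alpha_F\lambda_F-\alpha_L\lambda_L,
\]
and using the probability constraint $\lambda_L=1-\lambda_F$ reproduces the formulas in ($\alpha$0).

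Next I would translate (T3). Since $\mathcal{T}^*+\delta_R I$ preserves positivity iff both its diagonal off-diagonal entries, when applied to an arbitrary non-negative pair $(\lambda_F,\lambda_L)$, produce non-negative outputs, a one-line inspection of the two inequalities $(\delta_R-\alpha_F)\lambda_F+\alpha_L\lambda_L\geq 0$ and $\alpha_F\lambda_F+(\delta_R-\alpha_L)\lambda_L\geq 0$ shows that (T3) is equivalent to $\alpha_F,\alpha_L\geq 0$ together with the choice $\delta_R\geq\sup_{B_R\times\cP(B_R^Y)}\max(\alpha_F,\alpha_L)$, which exists thanks to ($\alpha$1). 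For the norm, a direct computation gives $(\mathcal{T}\varphi)(F)=\alpha_F(\varphi(L)-\varphi(F))$ and $(\mathcal{T}\varphi)(L)=-\alpha_L(\varphi(L)-\varphi(F))$, so
\[
\|\mathcal{T}\varphi\|_{\mathrm{Lip}}=\bigl(\max(\alpha_F,\alpha_L)+(\alpha_F+\alpha_L)\bigr)|\varphi(L)-\varphi(F)|,
\]
and since $|\varphi(L)-\varphi(F)|=\mathrm{Lip}(\varphi)\leq\|\varphi\|_{\mathrm{Lip}}$, with the converse obtained by testing against $\varphi=\mathds{1}_F-\mathds{1}_L$, the operator norm is equivalent (up to absolute multiplicative constants) to $\alpha_F+\alpha_L$. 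Applying this equivalence to $\mathcal{T}(x,\Psi)$ translates (T1) into ($\alpha$1), and applying it to the difference $\mathcal{T}(x^1,\Psi^1)-\mathcal{T}(x^2,\Psi^2)$ — which has the same structural form with coefficients $\alpha_F^1-\alpha_F^2$ and $\alpha_L^1-\alpha_L^2$ — translates (T2) into ($\alpha$2).

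For the converse direction, starting from non-negative functions $\alpha_F,\alpha_L$ satisfying ($\alpha$1)--($\alpha$2), I would define $\mathcal{T}(x,\Psi)$ by setting $\mathcal{T}\mathds{1}_F:=-\alpha_F\mathds{1}_F+\alpha_L\mathds{1}_L$ and $\mathcal{T}\mathds{1}_L:=-\mathcal{T}\mathds{1}_F$, extend linearly, and run the above computations in the opposite direction to verify (T0)--(T3). I do not anticipate a genuine obstacle; the only delicate step is pinning down the precise norm equivalence in terms of $\alpha_F+\alpha_L$, which requires a careful pairing of upper and lower bounds on $\|\mathcal{T}\|_{\mathcal{L}(\mathrm{Lip},\mathrm{Lip})}$ via a specific test function.
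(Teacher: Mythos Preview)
Your proposal is correct and follows essentially the same approach as the paper: both exploit the two-dimensionality of $\mathrm{Lip}(\{F,L\})$ to write $\mathcal{T}(x,\Psi)$ as a $2\times 2$ matrix and then translate (T0)--(T3) entrywise into conditions on the two off-diagonal coefficients $\alpha_F,\alpha_L$. The paper streamlines the norm estimates by simply equipping $\mathcal{L}(\mathrm{Lip},\mathrm{Lip})$ with the Frobenius norm of the matrix representation (equivalent, in this finite-dimensional setting, to the operator norm you compute explicitly), and it observes that (T1) and (T3) \emph{together} are equivalent to ($\alpha$1)---which avoids the slight circularity in your phrasing where you invoke ($\alpha$1) while still deriving it.
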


\begin{proof}
Since $\mathrm{Lip}(\{F,L\})$ is a two-dimensional linear space, we can identify $\cT(x, \Psi)$ with its matrix representation with respect to the canonical basis $\{\mathds{1}_F,\,\mathds{1}_L\}$ and endow the space $\mathcal{L}(\mathrm{Lip}(\{F,L\}),\mathrm{Lip}(\{F,L\})$ with the Frobenius norm of such a matrix representation. Accordingly, the transpose matrix will be the representation of $\cT^*(x, \Psi)$  with respect to the canonical basis of $\mathcal{F}(\{F, L\})$ consisting of the two Dirac masses $\delta_F$ and $\delta_L$.

Now, condition (T0) is  equivalent to the fact that
\begin{equation}\label{T168}
\cT(x, \Psi)=\left(
\begin{array}{c}
\!\!-\alpha_F(x, \Psi) \quad \alpha_F(x, \Psi)\\
\alpha_L(x, \Psi)  \quad -\alpha_L(x, \Psi)
\end{array}
\right)
\end{equation}
with $\alpha_F$, $\alpha_L \colon \mathbb{R}^d \times \cP_1(Y)\to [0, +\infty)$. In turn, this is equivalent to ($\alpha$0) by a direct computation. Again by a direct computation,  (T2) is equivalent to ($\alpha$2), while (T1) is equivalent to the inequalities
\[
%\begin{split}
| \alpha_\bullet(x,\Psi)|\leq M_\cT\left(1+\lvert x\rvert+m_1(\Psi)\right)\,,\qquad\text{for $\bullet=F,L$.}
%|\alpha_L(x,\Psi)|\leq M_\cT\left(1+\lvert x\rvert+m_1(\Psi)\right)\,.
%\end{split}
\]
In particular, $\alpha_F$ and $\alpha_L$ are uniformly bounded for $(x,\Psi)\in B_R\times\cP_1(B^Y_R)$, hence (T3) holds if and only if the nondiagonal elements $\alpha_F(x, \Psi)$ and $\alpha_L(x, \Psi)$ of the matrix representation \eqref{T168} are nonnegative. This implies that (T1) and (T3) together are equivalent to ($\alpha$1), which concludes the proof.
\end{proof}

To each element $\Psi$ of $\cP_1(\mathbb{R}^d \times \cP_1(\{F, L\}))$ we can associate a followers' and a leaders' distribution in a natural way, as we are going to discuss in the next definition.

\begin{defin}\label{T911}
Let $\Psi \in \cP_1(\mathbb{R}^d \times \cP_1(\{F, L\}))$ The followers distribution $\mu^F_\Psi$ associated to $\Psi$ is the positive Borel measure on $\mathbb{R}^d$ defined by
\begin{subequations}\label{169}
\begin{equation}\label{T169}
\mu^F_\Psi(B):=\int_{B \times \cP_1(\{F, L\})} \lambda_F \,\mathrm{d}\Psi(x, \lambda)
\end{equation}
for each Borel set $B \subset \mathbb{R}^d$. Similarly, the leaders distribution $\mu^L_\Psi$ associated to $\Psi$ is the positive Borel measure on $\mathbb{R}^d$ defined by
\begin{equation}\label{T170}
\mu^L_\Psi(B):=\int_{B \times \cP_1(\{F, L\})} (1-\lambda_F) \,\mathrm{d}\Psi(x, \lambda)\,.
\end{equation}
\end{subequations}
\end{defin}

Both the measures defined above have a simple interpretation. For instance, $\mu^F_\Psi(B)$ is the expected value of the number of leaders in a region $B$ for a probability distribution $\Psi$ on $\mathbb{R}^d \times \cP_1(\{F, L\})$. We also observe that the sum of the two measures $\mu^F_\Psi$ and $\mu^L_\Psi$ is exactly the $x$-marginal of $\Psi(x, \lambda)$. From a practical point of view, the integrals appearing in \eqref{169} %-\eqref{T170}
can be computed identifying the metric space $(\cP_1(\{F, L\}), \mathcal{W}_1)$ with the $1$-dimensional symplex $[0,1]$ endowed with the Euclidean distance (this is indeed an isometry).
We also point out the following inequalities, whose proof is straightforward
\begin{equation}\label{T172}
\begin{split}
m_1(\mu^F_\Psi)+m_1(\mu^L_\Psi)&\le m_1(\Psi)\,,\quad \hbox{for all }\Psi \in   \cP_1(\mathbb{R}^d \times \cP_1(\{F, L\}));\\
\lVert \mu_{\Psi_1}^F-\mu_{\Psi_2}^F\rVert_{BL} &\le 2 \mathcal{W}_1(\Psi_1,  \Psi_2) \quad \hbox{for all }\Psi_1, \Psi_2 \in   \cP_1(\mathbb{R}^d \times \cP_1(\{F, L\}))\\
 \lVert \mu_{\Psi_1}^L- \mu_{\Psi_2}^L\rVert_{BL}&\le 2 \mathcal{W}_1(\Psi_1,  \Psi_2) \quad \hbox{for all }\Psi_1, \Psi_2 \in   \cP_1(\mathbb{R}^d \times \cP_1(\{F, L\}))\,.
\end{split}
\end{equation}

Using the previous definition, we can also provide some relevant examples of velocity fields $v_\Psi$ complying with assumptions (v1)-(v3).
\begin{proposition}
Let $U=\{F, L\}$. For $\Psi \in \cP_1(\mathbb{R}^d \times \cP_1(\{F, L\}))$, consider the velocity field
\begin{equation}\label{T171}
v_{\Psi}(x, \lambda):=\lambda_F \left(K^{FF}\star \mu^F_\Psi+K^{LF}\star \mu^L_\Psi\right)+(1-\lambda_F) \left(K^{FL}\star \mu^F_\Psi+K^{LL}\star \mu^L_\Psi\right)
\end{equation}
where $\mu^F_\Psi$ and $\mu^L_\Psi$ are defined in \eqref{169} %-\eqref{T170} 
and the interaction kernels $K^{ij}\colon \mathbb{R}^d \to \mathbb{R}^d$ satisfy
\begin{equation*}
\begin{split}
|K^{ij}(x)|&\le M(1+|x|),\quad \mbox{for all }x \in \mathbb{R}^d;
\\
|K^{ij}(x_1)-K^{ij}(x_2)|&\le L_R\,|x_1-x_2|,\quad \mbox{for all }x_1, x_2 \in B_R\,.
\end{split}
\end{equation*}
Then, $v_{\Psi}(x, \lambda)$ satisfies (v1)-(v3).
\end{proposition}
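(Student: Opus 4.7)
The plan is to leverage two structural facts about \eqref{T171}. First, by Definition~\ref{T911} and \eqref{T172} the measures $\mu^F_\Psi,\mu^L_\Psi$ on $\R{d}$ satisfy $\mu^F_\Psi(\R{d})+\mu^L_\Psi(\R{d})=1$, $m_1(\mu^F_\Psi)+m_1(\mu^L_\Psi)\le m_1(\Psi)$, and are supported in the $x$-projection of $\mathrm{supp}(\Psi)$. Second, since $U=\{F,L\}$ is endowed with the $\{0,1\}$-valued distance, the indicator $\mathds{1}_{\{F\}}$ has $\Lip$-norm equal to $2$, so $|\lambda^1_F-\lambda^2_F|=|\langle\lambda^1-\lambda^2,\mathds{1}_{\{F\}}\rangle|\le 2\,\|\lambda^1-\lambda^2\|_{\BL}$. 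Writing $v_\Psi(x,\lambda)=\lambda_F V^F_\Psi(x)+(1-\lambda_F)V^L_\Psi(x)$ with $V^F_\Psi\coloneqq K^{FF}\star\mu^F_\Psi+K^{LF}\star\mu^L_\Psi$ and $V^L_\Psi\coloneqq K^{FL}\star\mu^F_\Psi+K^{LL}\star\mu^L_\Psi$, the proof reduces to estimating the four convolutions.

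I begin with (v3), the easiest. Inserting $|K^{ij}(x-z)|\le M(1+|x|+|z|)$ inside each convolution and using the two properties of the $\mu^\bullet_\Psi$ recalled above yields $|V^F_\Psi(x)|+|V^L_\Psi(x)|\le C(1+|x|+m_1(\Psi))$; since $\lambda_F,1-\lambda_F\in[0,1]$ and $|x|\le\|y\|_{\overline Y}$, this gives \eqref{T102}.

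Next, for (v1), I fix $\Psi\in\cP(B_R^Y)$, so that $\mu^F_\Psi,\mu^L_\Psi$ are supported in $B_R$. Decomposing
\[
v_\Psi(y^1)-v_\Psi(y^2)=\lambda^1_F\bigl(V^F_\Psi(x^1)-V^F_\Psi(x^2)\bigr)+(\lambda^1_F-\lambda^2_F)V^F_\Psi(x^2)+\text{(analogous $L$ terms)},
\]
the spatial increments $V^\bullet_\Psi(x^1)-V^\bullet_\Psi(x^2)$ are controlled by $L_{2R}\,|x^1-x^2|$, since both integration points $x^i-z$ lie in $B_{2R}$ when $|x^i|\le R$ and $z\in\mathrm{supp}(\mu^\bullet_\Psi)\subset B_R$; the label increments are controlled by $2\,C_R\,\|\lambda^1-\lambda^2\|_{\BL}$ via the inequality $|\lambda^1_F-\lambda^2_F|\le 2\,\|\lambda^1-\lambda^2\|_{\BL}$ and the pointwise bound $|V^\bullet_\Psi(x^2)|\le C_R$ from (v3). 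Summing produces \eqref{T100}.

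The main obstacle is (v2). Fix $y\in B_R^Y$ and $\Psi^1,\Psi^2\in\cP(B_R^Y)$; the task is to bound $|K^{ij}\star(\mu^F_{\Psi^1}-\mu^F_{\Psi^2})(x)|$ (and its $L$-counterpart) by $\cW_1(\Psi^1,\Psi^2)$. The natural tool is \eqref{T172}, namely $\|\mu^F_{\Psi^1}-\mu^F_{\Psi^2}\|_{\BL}\le 2\,\cW_1(\Psi^1,\Psi^2)$, but the map $z\mapsto K^{ij}(x-z)$ is not an admissible test function for $\|\cdot\|_{\BL}$, being unbounded on $\R{d}$. To fix this, I introduce a Lipschitz cutoff $\chi\colon\R{d}\to[0,1]$ with $\chi\equiv 1$ on $B_R$ and $\chi\equiv 0$ outside $B_{2R}$. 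Since the measures $\mu^F_{\Psi^1}$ and $\mu^F_{\Psi^2}$ are both supported in $B_R$, replacing $K^{ij}(x-\cdot)$ by $\chi(\cdot)K^{ij}(x-\cdot)$ does not alter the integral; the truncated function has $\|\cdot\|_\infty+\Lip(\cdot)\le C(R)$ thanks to $|x|\le R$, the sublinear bound on $K^{ij}$, and its local Lipschitz constant $L_{3R}$ on $B_{3R}$. Applying \eqref{T172} therefore yields $|K^{ij}\star(\mu^F_{\Psi^1}-\mu^F_{\Psi^2})(x)|\le 2\,C(R)\,\cW_1(\Psi^1,\Psi^2)$, and the four such contributions combine with $\lambda_F,1-\lambda_F\le 1$ to produce \eqref{T101}. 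The cutoff step is the only nontrivial point; everything else is bookkeeping.
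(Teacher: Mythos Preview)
Your proof is correct and is precisely the ``direct computation'' that the paper invokes without writing out. The decomposition $v_\Psi=\lambda_F V^F_\Psi+(1-\lambda_F)V^L_\Psi$, the use of \eqref{T172}, the bound $|\lambda^1_F-\lambda^2_F|\le 2\|\lambda^1-\lambda^2\|_{\BL}$, and the cutoff trick to render $z\mapsto K^{ij}(x-z)$ an admissible $\BL$-test function are exactly the ingredients one needs, and your bookkeeping is accurate.
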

\begin{proof}
The result follows by a direct computation.
\end{proof}

\begin{remark}\label{T916}
The velocity field \eqref{T171} corresponds to a particle model where each follower experiences a velocity $K^{FF}\star \mu^F_\Psi+K^{LF}\star \mu^L_\Psi$, which combines the action of the overall followers and leaders distribution. Similarly, each leader is under the action of the velocity field $K^{FL}\star \mu^F_\Psi+K^{LL}\star \mu^L_\Psi$. Hence, \eqref{T171} is an \emph{average} velocity of the system, weigthed by the probability $\lambda$ that a particle at $x$ has of being a leader or a follower.
\end{remark}

In a similar spirit to the previous proposition, also  transition rates $\alpha_F$ and $\alpha_L$ depending on $\mu^F_\Psi$ and $\mu^L_\Psi$ can be considered in our setting. This is for instance the point of view taken in \cite[Assumption (H4) and Appendix A]{ABRS2018}, where also some explicit examples were provided. Our assumptions are actually more general than those considered there: in particular, we can allow for an explicit dependence on the space variable $x$.
\begin{proposition}\label{T918}
Let $U=\{F, L\}$. Consider two functions $\alpha_F$, $\alpha_L\colon \mathbb{R}^d \times \mathcal{M}_+( \mathbb{R}^d) \times \mathcal{M}_+( \mathbb{R}^d)\to [0, +\infty)$ satisfying the following assumptions:
\begin{itemize}
\item there exists a constant $M$ such that, for $\bullet=F,L$,
\begin{equation}\label{T173}
0\le \alpha_\bullet(x,\mu, \nu)\le M\left(1+|x|+m_1(\mu)+m_1(\nu)\right)
\end{equation}
for all $ x\in \mathbb{R}^d$ and $(\mu, \nu) \in \mathcal{M}_+( \mathbb{R}^d) \times \mathcal{M}_+( \mathbb{R}^d)$;
\item for all $R>0$, there exist a constant $L_R$ such that, for $\bullet=F,L$,
\begin{equation}\label{T174}
|\alpha_\bullet(x_1,\mu_1, \nu_1)-\alpha_\bullet(x_2,\mu_2, \nu_2)|\le L_R\left(|x_1-x_2|+\lVert \mu_1-\mu_2\rVert_{BL}+ \lVert \nu_1-\nu_2\rVert_{BL}\right)
\end{equation}
for all $x_1$, $x_2 \in B_R$ and $(\mu_1, \nu_1)$,  $(\mu_2, \nu_2) \in \mathcal{M}_+( B_R) \times \mathcal{M}_+( B_R)$.
\end{itemize}
For $\Psi \in \cP_1(\mathbb{R}^d \times \cP_1(\{F, L\}))$, define $\mu^F_\Psi$ and $\mu^L_\Psi$ as in \eqref{169}. %-\eqref{T170}. 
Then, the functions
\begin{equation}\label{T177}
\alpha_F(x, \Psi):=\alpha_F(x, \mu^F_\Psi, \mu^L_\Psi) \quad \mbox{and} \quad  \alpha_L(x, \Psi):=\alpha_L(x, \mu^F_\Psi, \mu^L_\Psi)
\end{equation}
satisfy Assumptions ($\alpha$0)-($\alpha2$) in Proposition \ref{T175}.
\end{proposition}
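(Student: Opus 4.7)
The proof is a direct verification that composing the hypotheses on $\alpha_\bullet(x,\mu,\nu)$ with the projections $\Psi \mapsto (\mu^F_\Psi, \mu^L_\Psi)$ yields functions $(x,\Psi)\mapsto \alpha_\bullet(x,\Psi)$ that fit the framework of Proposition~\ref{T175}. The key tools are already at hand: the first inequality in \eqref{T172} bounds $m_1(\mu^F_\Psi)+m_1(\mu^L_\Psi)$ by $m_1(\Psi)$, while the second and third inequalities bound the $BL$-distances of the induced measures by $2\cW_1$.

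The plan is the following. Condition ($\alpha$0) is a definition: once $\alpha_F(x,\Psi)$ and $\alpha_L(x,\Psi)$ are given as scalar-valued, nonnegative functions, one simply declares $\cT^*$ via the formulas in ($\alpha$0), so there is nothing to check. Since the composition $\Psi\mapsto \alpha_\bullet(x,\mu^F_\Psi,\mu^L_\Psi)$ inherits nonnegativity from \eqref{T173}, the bound in ($\alpha$1) reads
\[
0\leq \alpha_\bullet(x,\mu^F_\Psi,\mu^L_\Psi)\leq M\bigl(1+|x|+m_1(\mu^F_\Psi)+m_1(\mu^L_\Psi)\bigr)\leq M\bigl(1+|x|+m_1(\Psi)\bigr),
\]
where the first inequality comes from \eqref{T173} and the second from the first inequality in \eqref{T172}. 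This gives ($\alpha$1) with $M_\cT=M$.

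For ($\alpha$2), fix $R>0$ and $(x^i,\Psi^i)\in B_R\times \cP(B_R^Y)$ for $i=1,2$. Observe that if $\Psi\in\cP(B_R^Y)$, then by Definition~\ref{T911} the measures $\mu^F_\Psi$ and $\mu^L_\Psi$ are supported in $B_R\subset\R{d}$, hence belong to $\cM_+(B_R)$, so \eqref{T174} is applicable. Combining \eqref{T174} with the second and third inequalities of \eqref{T172} yields
\[
\begin{aligned}
|\alpha_\bullet(x^1,\Psi^1)-\alpha_\bullet(x^2,\Psi^2)|
&\leq L_R\bigl(|x^1-x^2|+\|\mu^F_{\Psi^1}-\mu^F_{\Psi^2}\|_{BL}+\|\mu^L_{\Psi^1}-\mu^L_{\Psi^2}\|_{BL}\bigr)\\
&\leq L_R\bigl(|x^1-x^2|+4\cW_1(\Psi^1,\Psi^2)\bigr),
\end{aligned}
\]
which is ($\alpha$2) up to renaming the constant. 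Since every step is a direct application of an inequality already recorded in \eqref{T172} or an hypothesis of the statement, there is no real obstacle; the only thing to be careful about is to notice that the support constraint $\Psi\in\cP(B_R^Y)$ transfers to the marginals $\mu^F_\Psi,\mu^L_\Psi\in\cM_+(B_R)$, so that the local Lipschitz bound \eqref{T174} is actually applicable with the same radius $R$.
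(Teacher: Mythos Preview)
Your proof is correct and follows exactly the same route as the paper's own proof, which simply states that the result follows from \eqref{T173}--\eqref{T174} by means of the inequalities in \eqref{T172}. You have spelled out the details that the paper leaves implicit, including the observation that $\Psi\in\cP(B_R^Y)$ forces $\mu^F_\Psi,\mu^L_\Psi\in\cM_+(B_R)$ so that \eqref{T174} applies.
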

\begin{proof} 
The result follows from \eqref{T173}-\eqref{T174} by means of the inequalities in \eqref{T172}.
\end{proof}

The main result of this Section is an existence and uniqueness result for the system of equations considered in \cite{ABRS2018}, which  we are going to deduce from Theorem \ref{T150}. By doing this, we will extend the result in \cite[Proposition 3.2]{ABRS2018} to the case were the transition rates $\alpha_F$, $\alpha_L$ are allowed to explicitly depend on $x$, which was not considered there. The equations we consider are namely
\begin{align}\label{eq:macroleadfollstrong}
\left\{\begin{aligned}
\partial_t \mu^F_t& = -\mathrm{div}\big((K^{F}\star\mu^F_t + K^{L}\star\mu^L_t)\mu^F_t\big) - \alpha_F(x,\mu^F_t,\mu^L_t)\mu^F_t + \alpha_L(x, \mu^F_t,\mu^L_t)\mu^L_t,\\
\partial_t \mu^L_t& = -\mathrm{div}\big((K^{F}\star\mu^F_t + K^{L}\star\mu^L_t)\mu^L_t\big) +\alpha_F(x, \mu^F_t,\mu^L_t)\mu^F_t -\alpha_L(x, \mu^F_t,\mu^L_t)\mu^L_t
\end{aligned}\right.
\end{align}
to be solved by two positive Borel measures $\mu^F$ and $\mu^L$ attaining, for $t=0$, an initial datum $\bar{\mu}^F$, and $\bar{\mu}^L$, respectively, which we assume to have compact support in $\mathbb{R}^d$. As customary in this kind of models, we will assume that the initial total population is normalized to $1$, i.e.
\[
\bar{\mu}^F(\mathbb{R}^d)+\bar{\mu}^L(\mathbb{R}^d)=1\,.
\]

\begin{remark}\label{Tfin}
The velocity field in \eqref{eq:macroleadfollstrong} corresponds to the choice $K^{FF}=K^{FL}=K^F$ and  $K^{LF}=K^{LL}=K^L$ in \eqref{T171}. The key observation is that, in this case, the field $v_\Psi(x, \lambda)$ shows no explicit dependence on $\lambda$ and is simply given by
\begin{equation}\label{T176}
v_\Psi(x)=K^{F}\star\mu^F_\Psi + K^{L}\star\mu^L_\Psi\,.
\end{equation}
This will eventually allow us to decouple equation \eqref{T140} into the simpler system \eqref{eq:macroleadfollstrong}. Such an analysis is not possible if more than two different kernels are considered in \eqref{T171}. In that general case, the mean-field limit of the associated particle system must be formulated in terms of a solution $\Lambda$ to  \eqref{T140}, defined in the product space $\mathbb{R}^d \times \cP_1(\{F, L\})$.
\end{remark}

To proceed to the announced result, we need to recall te definition of a solution to \eqref{eq:macroleadfollstrong} which has been considered in \cite{ABRS2018}. Below, the shortcut $ \mathcal{M}_c(\mathbb{R}^d)$ is used to denote a positive Borel measure having compact support in $\mathbb{R}^d$.

\begin{defin}[Solution of system \eqref{eq:macroleadfollstrong}]\label{def:solmacroleadfoll}
	Let $(\overline{\mu}^F,\overline{\mu}^L)\in\mathcal{M}_c(\mathbb{R}^d)\times \mathcal{M}_c(\mathbb{R}^d)$ be given, as well as $\mu^F,\mu^L:[0,T]\rightarrow\mathcal{M}_c(\mathbb{R}^d)$. We say that the couple $(\mu^F_t,\mu^L_t)$ is a solution of system \eqref{eq:macroleadfollstrong} with initial datum $(\overline{\mu}^F,\overline{\mu}^L)$ when
	\begin{enumerate}
		\item $\mu^F_0 = \overline{\mu}^F$ and $\mu^L_0 = \overline{\mu}^L$;
		\item for each $i\in\{F,L\}$, the function $t\to \mu^i_t$ is continuous with respect to the topology of weak convergence of measures;
		\item there exists $R_T > 0$ such that $\bigcup_{t \in [0,T]}\supp(\mu^i_t)\subseteq B_{R_T}$ for every $i \in \{F,L\}$;
		\item for every $\varphi \in \mathcal{C}^1_c(\mathbb{R}^d)$ and $i \in \{F,L\}$ it holds
		\begin{align*}\begin{split}
		\frac{d}{dt}\int_{\mathbb{R}^d}\varphi(x)d\mu^i_t(x)& = \int_{\mathbb{R}^d}\nabla\varphi(x)\cdot\left[\sum_{j\in\{F,L\}}(K^{j}\star\mu^j_t)(x)\right]d\mu^i_t(x) \\
		&\;\;\;-\int_{\mathbb{R}^d}\varphi(x)\,\alpha_i(x, \mu^F_t,\mu^L_t)\mathrm{d}\mu^i_t(x)+\int_{\mathbb{R}^d}\varphi(x)\,\alpha_{\neg i}(x,\mu^F_t,\mu^L_t)\mathrm{d}\mu^{\neg i}_t(x),
		\end{split}
		\end{align*}
		for almost every $t\in[0,T]$, with
		\begin{align*}
		\neg i \coloneqq \begin{cases}
		L & \text{ if } i = F,\\
		F & \text{ if } i = L.
		\end{cases}
		\end{align*}
	\end{enumerate}
\end{defin}

We can now state the existence and uniqueness result for system \eqref{eq:macroleadfollstrong}.

\begin{theorem}\label{T187}
Let $U=\{F, L\}$. Consider two functions $\alpha_F$, $\alpha_L\colon \mathbb{R}^d \times \mathcal{M}_+( \mathbb{R}^d) \times \mathcal{M}_+( \mathbb{R}^d)\to [0, +\infty)$ satisfying \eqref{T173}-\eqref{T174} and two kernels $K^F$, $K^L\colon \mathbb{R}^d \to \mathbb{R}^d$ with 
\begin{equation}\label{T186}
\begin{split}
|K^{F}(x)|+|K^{L}(x)|&\le M(1+|x|)\quad \mbox{for all }x \in \mathbb{R}^d;
\\
|K^{F}(x_1)-K^{F}(x_2)|+|K^{L}(x_1)-K^{L}(x_2)|&\le L_R\,|x_1-x_2|\quad \mbox{for all }x_1, x_2 \in B_R\,.
\end{split}
\end{equation}
For $\Psi \in \cP_1(\mathbb{R}^d \times \cP_1(\{F, L\}))$, define $\mu^F_\Psi$ and $\mu^L_\Psi$ as in \eqref{169}. %-\eqref{T170}. 
For $x \in \mathbb{R}^d$ and $\Psi \in \cP_1(\mathbb{R}^d \times \cP_1(\{F, L\}))$, let  $v_\Psi(x)$  and $\mathcal{T}(x, \Psi)$ be given by \eqref{T176} and \eqref{T168} 
%\red{and \eqref{T177},} 
respectively, and consider the corresponding velocity field $b_\Psi$ as in \eqref{T034}.

Then, if $\Lambda \in C([0, T];\cP_1(\mathbb{R}^d \times \cP_1(\{F, L\}), \mathcal{W}_1)$ is the unique solution to \eqref{T140} starting from $\overline{\Lambda}\in \cP_c(\mathbb{R}^d \times \cP_1(\{F, L\})$, the measures $\mu^F_t:=\mu^F_{\Lambda_t}$ and $\mu^L_t:=\mu^L_{\Lambda_t}$ are the unique solutions to \eqref{eq:macroleadfollstrong} with initial data $\bar \mu^F=\mu^F_{\bar\Lambda}$ and $\bar \mu^L=\mu^L_{\bar\Lambda}$.
\end{theorem}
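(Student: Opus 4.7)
The proof splits into existence and uniqueness, both being consequences of Theorem~\ref{T150}. The key structural observation is that, under the choice $K^{FF}=K^{FL}=K^F$ and $K^{LF}=K^{LL}=K^L$, both the velocity field \eqref{T176} and the transition rates \eqref{T177} depend on $\Psi$ only through the marginals $\mu^F_\Psi,\mu^L_\Psi$. This decoupling is precisely what closes the system \eqref{eq:macroleadfollstrong} in $(\mu^F,\mu^L)$ and drives the whole argument.

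For existence, set $\mu^F_t:=\mu^F_{\Lambda_t}$, $\mu^L_t:=\mu^L_{\Lambda_t}$. Items (1)--(3) of Definition~\ref{def:solmacroleadfoll} follow at once: the initial condition from Definition~\ref{T911} applied to $\bar\Lambda$; narrow continuity from \eqref{T172} together with $\Lambda\in C^0([0,T];(\cP_1(Y),\cW_1))$; uniform compact support from Lemma~\ref{T162}, which yields $\supp\Lambda_t\subset B^Y_R$ for $R=(r+MT)\mathrm{e}^{2MT}$. To verify (4), I would fix $\varphi\in C^1_c(\R{d})$ and a smooth cutoff $\chi\in C^1_b(\R{})$ with $\chi(s)=s$ on $[0,1]$, and test the Eulerian identity \eqref{T037} against $\phi(x,\lambda):=\varphi(x)\chi(\lambda_F)$. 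This function lies in $C^1_b([0,T]\times\overline Y)$, since the $\lambda$-component of $D\phi$, proportional to the pairing $\langle\cdot,\mathds{1}_F\rangle$, is a bounded linear functional on $\cF(U)$ (with the two-valued distance on $U$ one has $\|\mathds{1}_F\|_{\mathrm{Lip}(U)}=2$). Because $\lambda_F\in[0,1]$ on $Y$, $\chi(\lambda_F)=\lambda_F$ and $\chi'(\lambda_F)=1$, and combining \eqref{T176}, Proposition~\ref{T175} and the $\lambda$-independence of $v_\Psi$ and $\alpha_\bullet(\cdot,\Psi)$ gives
\[
D\phi(x,\lambda)\cdot b_\Psi(x,\lambda)=\nabla\varphi(x)\cdot v_\Psi(x)\,\lambda_F+\varphi(x)\bigl(-\alpha_F(x,\Psi)\lambda_F+\alpha_L(x,\Psi)\lambda_L\bigr);
\]
integrating against $\Lambda_s$ and using Definition~\ref{T911} recovers the integrated form of the first equation of \eqref{eq:macroleadfollstrong}, and the choice $\phi(x,\lambda)=\varphi(x)\chi(\lambda_L)$ yields the second.

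For uniqueness, let $(\tilde\mu^F,\tilde\mu^L)$ be any solution to \eqref{eq:macroleadfollstrong} with the same initial data; the plan is to \emph{lift} it to a Lagrangian solution $\tilde\Lambda$ of \eqref{T140} and then invoke Theorem~\ref{T150}. Freezing the non-autonomous coefficients $\tilde v_t(x):=K^F\star\tilde\mu^F_t+K^L\star\tilde\mu^L_t$ and $\tilde\alpha_\bullet(t,x):=\alpha_\bullet(x,\tilde\mu^F_t,\tilde\mu^L_t)$, whose regularity follows from \eqref{T173}--\eqref{T174}, \eqref{T186} and Definition~\ref{def:solmacroleadfoll}(2)--(3), an immediate adaptation of Proposition~\ref{T143} yields a unique flow $\tilde\bY(t,0,\cdot)$ for the non-autonomous ODE $\dot x=\tilde v_t(x)$, $\dot\lambda=\widetilde{\cT}^*_t(x)\lambda$, with $\widetilde{\cT}_t(x)$ built from $\tilde\alpha_F,\tilde\alpha_L$ via \eqref{T168}. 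Set $\tilde\Lambda_t:=\tilde\bY(t,0,\cdot)_\#\bar\Lambda$.

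The crucial consistency step is to show $\mu^F_{\tilde\Lambda_t}=\tilde\mu^F_t$ and $\mu^L_{\tilde\Lambda_t}=\tilde\mu^L_t$ for all $t$. Repeating on $\tilde\Lambda_t$ (which solves the continuity equation with the fixed coefficients $\tilde b_t(x,\lambda)=(\tilde v_t(x),\widetilde{\cT}^*_t(x)\lambda)$) the test-function computation of the existence step, one finds that $(\mu^F_{\tilde\Lambda_t},\mu^L_{\tilde\Lambda_t})$ satisfies exactly the same linear non-autonomous continuity-plus-source system as $(\tilde\mu^F_t,\tilde\mu^L_t)$, with identical initial datum $(\bar\mu^F,\bar\mu^L)$. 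Uniqueness for such a linear system—with $t$-continuous, $x$-Lipschitz coefficients and uniformly compactly supported solutions—follows from a standard Gr\"onwall argument in the bounded Lipschitz norm, and forces the two pairs to coincide. Consequently $\tilde v_t=v_{\tilde\Lambda_t}$ and $\widetilde{\cT}_t(x)=\cT(x,\tilde\Lambda_t)$, so $\tilde\Lambda$ is a Lagrangian, hence Eulerian (by Remark~\ref{T155}), solution to \eqref{T140} starting from $\bar\Lambda$. Theorem~\ref{T150} then yields $\tilde\Lambda=\Lambda$, and passing to marginals gives $\tilde\mu^F_t=\mu^F_t$, $\tilde\mu^L_t=\mu^L_t$. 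The main technical point of the argument is precisely this linear-system uniqueness step; everything else is a careful unpacking of definitions.
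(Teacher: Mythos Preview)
Your existence argument coincides with the paper's: define $\mu^\bullet_t=\mu^\bullet_{\Lambda_t}$, check items (1)--(3) via Definition~\ref{T911}, \eqref{T172}, and Lemma~\ref{T162}, and derive item (4) by testing the Eulerian identity \eqref{T037} against $\phi(x,\lambda)=\lambda_F\,\varphi(x)$ (the paper uses this test function directly; your cutoff $\chi$ is a harmless refinement ensuring $\phi\in C^1_b(\overline Y)$, since $\lambda_F$ is unbounded on $\cF(U)$).

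Your uniqueness argument, however, is genuinely different from the paper's. The paper works \emph{directly} at the level of $(\mu^F,\mu^L)$: it establishes a variation-of-constants formula \eqref{T183} for the inhomogeneous linear transport equation $\partial_t\mu+\div(v_t\mu)=\xi_t$, derives the stability estimate \eqref{T184} in the bounded Lipschitz norm, and then closes a contraction argument on small time intervals for the nonlinear system \eqref{eq:macroleadfollstrong}. Your route instead \emph{lifts} an arbitrary solution $(\tilde\mu^F,\tilde\mu^L)$ to a curve $\tilde\Lambda$ in $\cP_1(Y)$ via the flow of the frozen dynamics, checks that the marginals of $\tilde\Lambda$ solve the same frozen linear system as $(\tilde\mu^F,\tilde\mu^L)$, invokes uniqueness for that linear system, and concludes via Theorem~\ref{T150}. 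Both approaches are valid. The paper's direct argument is self-contained and yields the explicit stability bound \eqref{T184} as a by-product; your lifting argument is more structural and exploits the abstract machinery of Theorem~\ref{T150} more fully, at the price of introducing the auxiliary object $\tilde\Lambda$. Note that the ``standard Gr\"onwall argument'' you appeal to for the frozen linear system is precisely the content of the paper's Step~1 (the variation-of-constants formula plus \eqref{T184}), so the two proofs share the same analytic core; they differ only in how the nonlinear uniqueness is reduced to it.
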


\begin{proof}
We start by observing that, under our assumptions on $\alpha_F$, $\alpha_L$, $K^F$ and $K^L$, the previous results of this section assure that the field $b_\Psi$ complies with the requirements of Theorem \ref{T150}, hence existence and uniqueness of the solution $\Lambda$ to \eqref{T140} starting from $\bar\Lambda$ is guaranteed. We split the proof into two parts, proving first existence and then uniqueness of the solutions.

{\it Existence.}  For $\Lambda$ as above, define $\mu^F_t:=\mu^F_{\Lambda_t}$ and $\mu^L_t:=\mu^L_{\Lambda_t}$. By definition and Lemma \ref{T162}, conditions (i) and (iii) in Definition \ref{def:solmacroleadfoll} are satisfied. The continuity property (ii) is instead a direct consequence of \eqref{T172} and the continuity of $\Lambda$ as a function of the time.
We therefore only have to check (iv) in Definition \ref{def:solmacroleadfoll}. We perform the required computation only for $\mu^F_t$, since the one for $\mu^F_t$ follows along similar lines. We take $\varphi  \in C^1_c(\mathbb{R}^d)$ and we define, for all $(x, \lambda) \in \mathbb{R}^d \times \mathcal{F}(\{F, L\})$ the test function
\[
\phi(x, \lambda)=\lambda_F \varphi(x)\,.
\]
We notice that the $(\mathbb{R}^d)^*$-component of the  Fr\'echet differential of $\phi$ at $(x, \lambda)$ is given by $\lambda_F D\varphi(x)$, while the action of the other component is independent of $\lambda$ by linearity and is given by
\[
\xi\mapsto \varphi(x) \xi_F\,.
\]
We apply the definition \eqref{T037} of Eulerian solution to the above test function $\phi$, which does not depend on $t$, and we get
\begin{equation}\label{T037+}
\begin{split}
\frac{\mathrm{d}}{\mathrm{d}t}\int_{\mathbb{R}^d \times \cP_1(\{F, L\})}\lambda_F\varphi(x)\,\de\Lambda_t(x, \lambda)= %& 
\int_{\mathbb{R}^d \times \cP_1(\{F, L\})}\lambda_F\nabla \varphi(x)\cdot v_{\Lambda_t}(x)\,\de\Lambda_t(x, \lambda)\\
+\int_{\mathbb{R}^d \times \cP_1(\{F, L\})}\varphi(x)\left(\mathcal{T}^*(x,\Lambda_t)\lambda\right)_F\,\de\Lambda_t(x, \lambda)\,,
\end{split}
\end{equation}
for all $t \in [0, T]$. We observe now that \eqref{T169}-\eqref{T170} are equivalent to the duality relationships
\begin{equation}\label{T589}
\begin{split}
\int_{\mathbb{R}^d}\zeta(x)\,\mathrm{d}\mu^F_\Psi(x)=\int_{\mathbb{R}^d \times \cP_1(\{F, L\})} \lambda_F\, \zeta(x)\,\mathrm{d}\Psi(x, \lambda)\,,\\ \int_{\mathbb{R}^d}\zeta(x)\,\mathrm{d}\mu^L_\Psi(x)=\int_{\mathbb{R}^d \times \cP_1(\{F, L\})} (1-\lambda_F) \zeta(x)\,\mathrm{d}\Psi(x, \lambda)
\end{split}
\end{equation}
for all $\zeta \in C_b(\mathbb{R}^d)$. Applying the first one to the functions $\varphi(x)$ and $\nabla \varphi(x)\cdot v_{\Lambda_t}(x)\in C_b(\mathbb{R}^d)$ gives\footnote{here it is crucial that the velocity field $v_{\Lambda_t}(x)$ only depends on $x$.}
\begin{equation}\label{T177}
\begin{split}
\int_{\mathbb{R}^d \times \cP_1(\{F, L\})}\lambda_F\varphi(x)\,\de\Lambda_t(x, \lambda)&=\int_{\mathbb{R}^d} \varphi(x)\,\mathrm{d}\mu^F_t(x)\\
\int_{\mathbb{R}^d \times \cP_1(\{F, L\})}\lambda_F\nabla \varphi(x)\cdot v_{\Lambda_t}(x)\,\de\Lambda_t(x, \lambda)&=\int_{\mathbb{R}^d}\nabla \varphi(x)\cdot v_{\Lambda_t}(x)\,\de\mu^F_t(x)\,.
\end{split}
\end{equation}
With Proposition~\ref{T175}, \eqref{T177}, and applying \eqref{T589} %-\eqref{T170} 
to the functions $-\varphi(x)\alpha_F(x, \mu^F_t, \mu^L_t)$, and $\varphi(x)\alpha_L(x, \mu^F_t, \mu^L_t)$, respectively, we get
\[
\begin{split}
\int_{\mathbb{R}^d \times \cP_1(\{F, L\})}\varphi(x)\left(\mathcal{T}^*(x,\Lambda_t)\lambda\right)_F\,\de\Lambda_t(x, \lambda)=\\
-\int_{\mathbb{R}^d}\varphi(x)\,\alpha_F(x, \mu^F_t,\mu^L_t)\mathrm{d}\mu^F_t(x)+\int_{\mathbb{R}^d}\varphi(x)\,\alpha_{L}(x,\mu^F_t,\mu^L_t)\mathrm{d}\mu^{L}_t(x)\,.
\end{split}
\]
Hence, also using \eqref{T177} and the explicit expression \eqref{T176} of the field $v_{\Lambda_t}(x)$, \eqref{T037+} is equivalent to equality (iv) in Definition \ref{def:solmacroleadfoll} for $i=F$, as required. This proves existence of a solution to \eqref{eq:macroleadfollstrong} .

{\it Uniqueness}. The proof is divided into two steps.

{\it Uniqueness- Step 1: continuous dependence for an auxiliary equation}. For a given $Z>0$ we fix the class of Carath\'edory vector fields
\[
\mathcal{V}_Z:=\{v \in L^\infty([0, T], \mathrm{Lip}(\mathbb{R}^d)):  \lVert v \rVert_{L^\infty([0, T], \mathrm{Lip}(\mathbb{R}^d))} \le Z  \}
\]
and we denote as usual by $\mathbf{Y}_v(t, s, \cdot)$ the associated transition maps, which satisfy the equalities
\begin{equation}\label{T179}
\mathbf{Y}_v(s,s,x)=x, \quad \frac{\de}{\de t}\mathbf{Y}_v(t,s,x)=v_t(\mathbf{Y}_v(t,s,x))
\end{equation}
for every $0\le s\le t\le T$ and $x\in \mathbb{R}^d$. From these, we can deduce the Gr\"onwall-type estimates
\begin{equation}\label{T180}
\begin{split}
|\mathbf{Y}_v(t,s,x_1)-\mathbf{Y}_v(t,s,x_2)|&\le \mathrm{e}^{Z(t-s)}|x_1-x_2|,\\
|\mathbf{Y}_v(t,s,x)-\mathbf{Y}_w(t,s,x)|&\le (t-s)\mathrm{e}^{Z(t-s)}\sup_{s\in [0, t]}\lVert w_s-v_s \rVert_{C_b(\mathbb{R}^d)}
\end{split}
\end{equation}
for every $0\le s\le t\le T$,   $x$, $x_1$, $x_2\in \mathbb{R}^d$ and $v, w \in \mathcal{V}_Z$.
For $v \in \mathcal{V}_Z$ and a given narrowly continuous family $\xi_t$ of signed measures satisfying $|\xi_t| \le R$ for all $t \in [0, T]$, we consider the inhomogeneous equation
\begin{equation}\label{T182}
\partial_t \mu_t + \mathrm{div}(v_t \mu_t)=\xi_t\,.
\end{equation}
We claim that, for a given initial datum $\bar \mu \in \mathcal{M}(\mathbb{R}^d)$, the unique solution of \eqref{T182} starting from $\bar \mu$ is given by the variation of constants formula
\begin{equation}\label{T183}
\mu_t=\mathbf{Y}_v(t, 0, \cdot)_\#\bar \mu+\int_0^t \mathbf{Y}_v(t, s, \cdot)_\#\xi_s\,\de s\,.
\end{equation}
A direct computation using \eqref{T179} proves indeed that the above formula provides a solution to \eqref{T182}, while uniqueness follows by taking the difference of two solutions and using that the comparison principle \cite[Proposition 8.1.7]{AGS2008} for the continuity equation.  

Now, if $\sigma_t$ is another narrowly continous family of signed measures, using \eqref{T180} for all test functions $\varphi$ with $|\varphi|\le 1$ and  $\mathrm{Lip}(\varphi) \le 1$ we have
\[
\begin{split}
&\left|\int_0^t \int_{\mathbb{R}^d} \varphi(\mathbf{Y}_v(t,s,x))\de(\xi_s-\sigma_s)\,\de s\right|\le \\
 &\left( \int_0^t \mathrm{e}^{Z(t-s)}\de s\right)\sup_{s\in [0, t]}\lVert \xi_s-\sigma_s \rVert_{BL}\le t\, C_{T, Z}\sup_{s\in [0, t]}\lVert \xi_s-\sigma_s \rVert_{BL}\,.
\end{split}
\]
With this, \eqref{T180}, and \eqref{T183}, we also deduce that, if $\nu$ solves \eqref{T182} for another velocity field $w \in \mathcal{V}_z$, another narrowly continuous family $\sigma_t$ with $|\sigma_t|\le R$,  and  the same initial datum $\bar \mu$, the following estimate holds true:
\begin{equation}\label{T184}
\lVert \mu_t-\nu_t\rVert_{BL} \le t \, C_{\bar \mu, T, R, Z}\left(\sup_{s\in [0, t]}\lVert w_s-v_s \rVert_{C_b(\mathbb{R}^d)}+\sup_{s\in [0, t]}\lVert \xi_s-\sigma_s \rVert_{BL}\right)\,,
\end{equation}
where the constant   $C_{\bar \mu, T, R, Z}$ is given by
\[
 C_{\bar \mu, T, R, Z}:=|\bar \mu|\mathrm{e}^{ZT}+ (1+RT)C_{T, Z}\,.
\]

{\it Uniqueness- Step 2: conclusion}. Consider two solutions $(\mu^F_t, \mu^L_t)$ and $(\nu^F_t, \nu^L_t)$ of \eqref{eq:macroleadfollstrong} starting from the same initial datum $(\bar \mu^F, \bar \mu^L)$. Observe that the equation preserves the total mass, that is
\[
\mu^F_t(\mathbb{R}^d)+ \mu^L_t(\mathbb{R}^d)=\nu^F_t(\mathbb{R}^d)+\nu^L_t(\mathbb{R}^d)=1
\]
for all $t \in [0, T]$. This can be rewritten as
\begin{equation}\label{T185}
|\mu^F_t|+ |\mu^L_t|=|\nu^F_t|+|\nu^L_t|=1\,,
\end{equation}
since all the above measures are positive. Now, on the one hand $\mu^F_t$ solves  \eqref{T182} with $v_t=K^F\star \mu^F_t+K^L\star \mu^L_t$ and $\xi_t=-\alpha_F(\cdot,\mu^F_t, \mu^L_t)\mu^F_t+\alpha_L(\cdot,\mu^F_t, \mu^L_t)\mu^L_t$. On the other hand, $\nu^F_t$ solves  \eqref{T182} with $v_t$ replaced by $w_t=K^F\star \nu^F_t+K^L\star \nu^L_t$ and $\sigma_t=-\alpha_F(\cdot,\nu^F_t, \nu^L_t)\nu^F_t+\alpha_L(\cdot,\nu^F_t, \nu^L_t)\nu^L_t$ in place of $\xi_t$.Since $\mu^F_t$, $\mu^L_t$, $\nu^F_t$, and $\nu^L_t$ have compact support contained in a fixed ball $B_{R_T}$ by Definition \ref{def:solmacroleadfoll}, we can assume (up to multiplying $v_t$ and $w_t$ by a suitable cut-off function  not affecting equation \eqref{T182}) that $v$ belongs to $\mathcal{V}_Z$ for a constant $Z$ only depending on $T$, $R_T$ and the constant $M$ in \eqref{T186}. Using \eqref{T173}, \eqref{T174}, and \eqref{T185}, we also get the estimates
\[
\begin{split}
&|\xi_t|\le 2M(1+2R_T),\quad |\sigma_t|\le 2M(1+2R_T),\\
\lVert\sigma_t-\xi_t\|_{BL}&\le (M(1+2R_T)+2L_{R_T})\left(\lVert\mu^F_t-\nu^F_t\rVert_{BL}+\lVert\mu^L_t-\nu^L_t\rVert_{BL}\right).
\end{split}
\]
In a similar way, using \eqref{T186} we obtain
\[
\lVert v_t-w_t\rVert_{C_b(\mathbb{R}^d)} \le (M(1+2R_T)+L_{2R_T})\left(\lVert\mu^F_t-\nu^F_t\rVert_{BL}+\lVert\mu^L_t-\nu^L_t\rVert_{BL}\right).
\]
Combining the previous inequalities with \eqref{T184} and \eqref{T185}, we get that there exists a constant $C$, only depending on $T$, $R_T$ and $M$, such that 
\[
\lVert \mu^F_{\tau}-\nu^F_{\tau}\rVert_{BL} \le \tau \, C\left(\sup_{s\in [0, \tau]}\lVert \mu^F_{\tau}-\nu^F_\tau \rVert_{BL}+\sup_{s\in [0, \tau]}\lVert \mu^L_{\tau}-\nu^L_{\tau}\rVert_{BL}\right)\,.
\]
for all $0\le \tau \le t \le T$. A similar estimate also holds for $\lVert \mu^L_{\tau}-\nu^L_{\tau}\rVert_{BL}$. Taking the supremum in the left-hand sides and summing up the resulting inequalities we obtain
\[
\sup_{s\in [0, t]}\lVert \mu^F_{t}-\nu^F_t\rVert_{BL}+\sup_{s\in [0, t]}\lVert \mu^L_{t}-\nu^L_{t}\rVert_{BL} \le 2t \, C\left(\sup_{s\in [0, t]}\lVert \mu^F_{t}-\nu^F_t\rVert_{BL}+\sup_{s\in [0, t]}\lVert \mu^L_{t}-\nu^L_{t}\rVert_{BL}\right)\,.
\]
for all $0 \le t \le T$. This implies that $\mu^F_t=\nu^F_t$, as well as $\mu^L_t=\nu^L_t$ for all $0 \le t \le \frac{1}{2C}$. As $C$ only depends on $T$, $R_T$ and $M$, iterating the argument a finite number of times yields uniqueness on all $[0, T]$.
\end{proof}

\begin{remark}
For all choices of the inital data $\bar \mu^F$ and $\bar \mu^L$ whose sum is a probability measure $\bar \mu$ with compact support, we can construct $\bar \Lambda \in \cP_c(Y)$ so that $\mu^F_{\bar \Lambda}=\bar \mu^F$ and $\mu^L_{\bar \Lambda}=\bar \mu^L$. This can be done, for instance, as follows: if $g^F(x)$ is the Radon-Nikodym derivative of $\bar \mu^F$ with respect to $\bar \mu$, we can define, for $\bar \mu$-a.e. $x\in \mathbb{R}^d$, the measure $\lambda_x\in \cP_1(\{F, L\})$ via $\lambda_x:= g^F(x) \delta_F + (1-g^F(x))\delta_L$. Then, the measure $\bar \Lambda$, defined by duality through
\[
\int_{Y} \phi (y)\,\de \bar \Lambda:=\int_{\mathbb{R}^d} \phi(x, \lambda_x)\,\de \bar \mu(x)
\]
for all $\phi \in C_b(Y)$, satisfies the claim. Hence,  Theorem \ref{T187} is a full existence result for system \eqref{eq:macroleadfollstrong}.
\end{remark}

To conclude the Section, we observe that combining the above Remark with Theorems \ref{T150} and \ref{T187} we obtain a mean-field derivation of system \eqref{eq:macroleadfollstrong} which extends the results of \cite[Section 4]{ABRS2018}.

\section{Further applications}\label{sec:other}
In this section we present two situations which fit in the theory presented in Section~\ref{sec:abstract_mod} and are suitable to describe possible applications.
In Section~\ref{discreteU} we present the generalization of the results in Section~\ref{sec:LF} to the case of a discrete and finite label space $U$; in Section~\ref{continuousU} we focus on a continuum of labels and compare it with the results of \cite{AFMS2018}.

\subsection{Discrete and finite spaces of labels $U$}\label{discreteU}
In this case, we will identify the discrete space of labels $U=\{u_1,\ldots,u_H\}$ with the set of the indices of the labels, so that our model will be $U=\{1,\ldots,H\}$.
We will endow $U$ with the Euclidean distance (restricted to $U$), namely,
\begin{equation}\label{T900}
\dist{h}{k}=|h-k|, \qquad\text{for $h,k\in U$.}
\end{equation}
Then the space $\Lip(U)$ is an $H$-dimensional space spanned by the indicator functions $\mathds{1}_h$, for $h\in\{1,\ldots,N\}$.
Consequently, the free space $\cF(U)$ is the space of signed Borel measures on $U$ whose generic element $\xi$ is characterized by the values $\xi_h \coloneqq \xi(\{h\})$.

Analogously to Proposition~\ref{T175}, we have the following characterization of the operators $\cT$ (and $\cT^*$) satisfying assumptions (T0)-(T3).
\begin{proposition}\label{T901}
Let $U=\{1,\ldots,H\}$. 
Then $\cT\colon \R{d}\times\cP_1(U)\to\Lip(U)$ satisfies (T0)-(T3) if and only if there exist $H^2$ functions $\alpha_{hk}\colon\R{d}\times\cP_1(U)\to[0,+\infty)$ such that
\begin{itemize}
\item[($\bar\alpha$0)] for every $(x,\Psi)\in\R{d}\times\cP_1(Y)$ and $\lambda \in \cP_1(U)$ it holds
\begin{equation*}
\begin{split}
%(\cT^*(x,\Psi)\lambda)_F&=-\alpha_F (x,\Psi)\lambda_F + \alpha_L(x,\Psi)(1-\lambda_F)\,,\\
(\cT^*(x,\Psi)\lambda)_h&=-\alpha_{hh} (x,\Psi)\lambda_h + \sum_{k\neq h}\alpha_{kh}(x,\Psi)\lambda_k\,,\qquad\text{for all $h\in U$}
\end{split}
\end{equation*}
with
\begin{equation}\label{T903}
\alpha_{hh}(x,\Psi)=\sum_{k\neq h} \alpha_{hk}(x,\Psi),\qquad\text{for all $h\in U$};
\end{equation}
\item[($\bar\alpha$1)] for every $(x,\Psi)\in \R{d}\times\cP_1(Y)$, there exists $M_{\cT}>0$
such that 
\begin{equation*}
%\begin{split}
%0\le \alpha_F(x,\Psi)\leq M_\cT\left(1+\lvert x\rvert+m_1(\Psi)\right)\,,\\
0\le \alpha_{hk}(x,\Psi)\leq M_{\cT}\big(1+\lvert x\rvert+m_1(\Psi)\big),\qquad\text{for all $h,k\in U$;} %, $h\neq k$;}
%\end{split}
\end{equation*}
\item[($\bar\alpha$2)]for every $R>0$ there exists $L_{\cT,R}>0$ 
such that, for every $(x^1,\Psi^1),(x^2,\Psi^2)\in B_R\times\cP(B_R^Y)$, 
\begin{equation*}
%\begin{split}
%|\alpha_F(x^1,\Psi^1)-\alpha_F(x^2,\Psi^2)|\leq L_{\cT,R}\big(|x^1-x^2|+\cW_1(\Psi^1,\Psi^2)\big)\,,\\
|\alpha_{hk}(x^1,\Psi^1)-\alpha_{hk}(x^2,\Psi^2)|\leq L_{\cT,R}\big(|x^1-x^2|+\cW_1(\Psi^1,\Psi^2)\big),\qquad\text{for $h,k\in U$.} %, $h\neq k$.}
%\end{split}
\end{equation*}
\end{itemize}
\end{proposition}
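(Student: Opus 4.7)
The plan is to mirror the proof of Proposition~\ref{T175}, replacing the $2\times 2$ matrix representation by an $H\times H$ one. Since $\Lip(U)$ is $H$-dimensional with basis $\{\mathds{1}_1,\ldots,\mathds{1}_H\}$, I would identify $\cT(x,\Psi)$ with its matrix $(T_{hk}(x,\Psi))$ defined by $\cT(x,\Psi)\mathds{1}_k=\sum_{h=1}^H T_{hk}(x,\Psi)\mathds{1}_h$; correspondingly, $\cT^*(x,\Psi)$ acts on $\cF(U)\simeq\R{H}$ (with dual basis $\{\delta_1,\ldots,\delta_H\}$) as the transpose matrix $T^\top(x,\Psi)$. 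Equipping $\cL(\Lip(U);\Lip(U))$ with the Frobenius norm of this matrix representation makes the operator-norm bounds in (T1)--(T2) equivalent, up to constants depending only on $H$, to entry-wise sublinear and Lipschitz estimates on each $T_{hk}$.

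The core of the argument is the translation of (T0) and (T3) into scalar conditions on the entries. Since $1=\sum_{k=1}^H\mathds{1}_k$, condition (T0) is equivalent to the vanishing of every row sum, $\sum_{k=1}^H T_{hk}=0$. Setting $\alpha_{hk}:=T_{hk}$ for $k\ne h$ and $\alpha_{hh}:=-T_{hh}$, this row-sum identity becomes precisely \eqref{T903}, and a direct computation of the components of $T^\top\lambda$, separating the $\lambda_h$ term from the others, produces the formula in ($\bar\alpha$0). For (T3), the statement that $\cT^*(x,\Psi)+\delta_R I$ maps positive measures to positive measures can be tested on the Dirac atoms $\delta_k$: this yields the off-diagonal nonnegativity $T_{kh}\ge 0$ for $k\ne h$, i.e.\ $\alpha_{hk}\ge 0$ for $k\ne h$, while the diagonal inequality $T_{hh}+\delta_R\ge 0$ is automatic on any bounded set $B_R\times\cP(B_R^Y)$ thanks to the entry bound inherited from (T1). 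Combined with \eqref{T903}, this also forces $\alpha_{hh}\ge 0$, so (T1) and (T3) together become equivalent to ($\bar\alpha$1). The equivalence between (T2) and ($\bar\alpha$2) follows from the same norm-equivalence argument applied to the $(x,\Psi)$-increments of the entries.

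The converse direction is a straightforward reversal: given $\alpha_{hk}$ satisfying ($\bar\alpha$0)--($\bar\alpha$2), I would set $T_{hk}:=\alpha_{hk}$ for $k\ne h$ and $T_{hh}:=-\sum_{k\ne h}\alpha_{hk}$, and verify each of (T0)--(T3) by running the matrix-entry correspondences backward. I do not anticipate any real obstacle; the only delicate bookkeeping is to keep the transposition convention between $\cT$ and $\cT^*$ consistent, so that the source/target pattern of indices in ($\bar\alpha$0) is compatible with the vanishing of rows (rather than columns) of $T$ coming from (T0).
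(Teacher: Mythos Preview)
Your proposal is correct and matches the paper's approach exactly: the paper's proof is simply ``The proof is analogous to that of Proposition~\ref{T175}'', and you have spelled out precisely that analogy with the $H\times H$ matrix representation, the row-sum reading of (T0), the off-diagonal nonnegativity from (T3), and the norm equivalence for (T1)--(T2). The only cosmetic difference is that you test positivity in (T3) via $\cT^*$ on Dirac atoms rather than $\cT$ on indicator functions, but these are equivalent in finite dimensions and lead to the same entrywise conditions.
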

\begin{proof}
The proof is analogous to that of Proposition~\ref{T175}.
\end{proof}
%It is immediate to see that, if $U$ is finite, then conditions \eqref{T906} and \eqref{T907} are automatically satisfied, so that ($\bar\alpha$1) and ($\bar\alpha$2) can be stated as
%\begin{itemize}
%\item[($\bar\alpha$1f)] for every $(x,\Psi)\in \R{d}\times\cP_1(Y)$, there exists $M_{\cT}>0$ such that 
%\begin{equation*}
%%\begin{split}
%%0\le \alpha_F(x,\Psi)\leq M_\cT\left(1+\lvert x\rvert+m_1(\Psi)\right)\,,\\
%0\le \alpha_{hk}(x,\Psi)\leq M_{\cT}\big(1+\lvert x\rvert+m_1(\Psi)\big),\qquad\text{for all $h,k\in U$, $h\neq k$;}
%%\end{split}
%\end{equation*}
%\item[($\bar\alpha$2f)]for every $R>0$ there exists $L_{\cT,R}>0$ such that, for every $(x^1,\Psi^1),(x^2,\Psi^2)\in B_R\times\cP(B_R^Y)$, 
%\begin{equation*}
%%\begin{split}
%%|\alpha_F(x^1,\Psi^1)-\alpha_F(x^2,\Psi^2)|\leq L_{\cT,R}\big(|x^1-x^2|+\cW_1(\Psi^1,\Psi^2)\big)\,,\\
%|\alpha_{hk}(x^1,\Psi^1)-\alpha_{hk}(x^2,\Psi^2)|\leq L_{\cT,R}\big(|x^1-x^2|+\cW_1(\Psi^1,\Psi^2)\big),\qquad\text{for $h,k\in U$, $h\neq k$.}
%%\end{split}
%\end{equation*}
%\end{itemize}
We notice that a matrix representation of the operator $\cT$ analogous to that in \eqref{T168} holds
\begin{equation}\label{T902}
\cT(x,\Psi)=\left(
\begin{array}{cccc}
-\alpha_{11}(x,\Psi) & \alpha_{12}(x,\Psi) & \cdots & \alpha_{1H}(x,\Psi)  \\
\alpha_{21}(x,\Psi) & -\alpha_{22}(x,\Psi) & \cdots & \alpha_{2H}(x,\Psi)  \\
\vdots & \vdots & \ddots & \vdots  \\
\alpha_{H1}(x,\Psi) & \alpha_{H2}(x,\Psi) & \cdots & -\alpha_{HH}(x,\Psi) 
\end{array}\right),
\end{equation}
where, as sa consequence of \eqref{T903}, the sum of the elements on each row must give zero.
%\begin{equation}\label{T903}
%\sum_{k\in U} \alpha_{hk}(x,\Psi)=0,\qquad\text{for all $h\in U$, and for every $(x,\Psi)\in\R{d}\times\cP_1(Y)$.}
%\end{equation}
In this discrete setting, the operator $\cT^*(x,\psi)$ has a matrix representation given by the inverse matrix of that representing $\cT(x,\Psi)$, so that 
\begin{equation}\label{T904}
\cT^*(x,\Psi)=\left(
\begin{array}{ccccc}
-\alpha_{11}(x,\Psi) & \alpha_{21}(x,\Psi) & \cdots & \alpha_{H1}(x,\Psi)  \\
\alpha_{12}(x,\Psi) & -\alpha_{22}(x,\Psi) & \cdots & \alpha_{H2}(x,\Psi)  \\
\vdots & \vdots & \ddots & \vdots  \\
\alpha_{1H}(x,\Psi) & \alpha_{2H}(x,\Psi) & \cdots & -\alpha_{HH}(x,\Psi) 
\end{array}\right)
\end{equation}
and condition \eqref{T903} implies that 
%\begin{equation}\label{T905}
%\sum_{k\in U} \alpha_{kh}(x,\Psi)=0,\qquad\text{for all $h\in U$, and for every $(x,\Psi)\in\R{d}\times\cP_1(Y)$,}
%\end{equation}
%stating that 
the sum of the elements on each column must vanish.

Definition~\ref{T911} is adapted to the following
\begin{defin}\label{T912}
Let $\Psi \in \cP_1(\mathbb{R}^d \times \cP_1(U))$ The distribution $\mu^h_\Psi$ of the agents with label $h\in U$ associated to $\Psi$ is the positive Borel measure on $\mathbb{R}^d$ defined by
%\begin{subequations}\label{169}
\begin{equation}\label{T913}
\mu^h_\Psi(B):=\int_{B \times \cP_1(U)} \lambda_h \,\mathrm{d}\Psi(x, \lambda)
\end{equation}
for each Borel set $B \subset \mathbb{R}^d$. 
%Similarly, the leaders' distribution $\mu^L_\Psi$ associated to $\Psi$ is the positive Borel measure on $\mathbb{R}^d$ defined by
%\begin{equation}\label{T170}
%\mu^L_\Psi(B):=\int_{B \times \cP_1(\{F, L\})} (1-\lambda_F) \,\mathrm{d}\Psi(x, \lambda)\,.
%\end{equation}
%\end{subequations}
\end{defin}

Also in this case, upon choosing suitable interaction kernels, the measures $\mu_\Psi^h$ defined in \eqref{T913} can be used to construct velocity fields (of the type in \eqref{T171}) $v_\Psi$ satisfying (v1)-(v3).
\begin{proposition}\label{T914}
Let $U=\{1,\ldots,H\}$. 
For $\Psi \in \cP_1(\mathbb{R}^d \times \cP_1(U))$, consider the velocity field
\begin{equation}\label{T915}
v_{\Psi}(x, \lambda)\coloneqq \sum_{h,k\in U} \lambda_k \left(K^{hk}\star \mu^h_\Psi\right)
\end{equation}
where $\mu^h_\Psi$, for $h\in U$, are defined in \eqref{T913} %-\eqref{T170} 
and the interaction kernels $K^{hk}\colon \mathbb{R}^d \to \mathbb{R}^d$, for $h,k\in U$, satisfy
\begin{equation*}
\begin{split}
|K^{hk}(x)|&\le M(1+|x|),\quad \mbox{for all }x \in \mathbb{R}^d;
\\
|K^{hk}(x_1)-K^{hk}(x_2)|&\le L_R\,|x_1-x_2|,\quad \mbox{for all }x_1, x_2 \in B_R\,.
\end{split}
\end{equation*}
Then, $v_{\Psi}(x, \lambda)$ satisfies (v1)-(v3).
\end{proposition}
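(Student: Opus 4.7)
The proof will proceed by direct verification, parallelling the reasoning sketched for the two-label velocity field \eqref{T171}, with the duality relation
\[
\int_{\R{d}} \zeta(x)\,\mathrm{d}\mu^h_\Psi(x)=\int_{\R{d}\times\cP_1(U)} \lambda_h \zeta(x)\,\mathrm{d}\Psi(x,\lambda),\qquad \zeta\in C_b(\R{d}),
\]
(which is the direct analogue of \eqref{T589}) as the main tool. As a preliminary observation, I will record the bounds analogous to \eqref{T172}: for every $h\in U$ and every $\Psi,\Psi^1,\Psi^2\in \cP_1(\R{d}\times\cP_1(U))$,
\[
\mu^h_\Psi(\R{d})\leq 1,\qquad m_1(\mu^h_\Psi)\leq m_1(\Psi),\qquad \lVert \mu^h_{\Psi^1}-\mu^h_{\Psi^2}\rVert_{\BL}\leq 2\,\cW_1(\Psi^1,\Psi^2),
\]
where the last inequality follows by testing with $\zeta\in\Lip_b(\R{d})$ of unit norm, applying the duality above, and noticing that $(x,\lambda)\mapsto \lambda_h\zeta(x)$ is Lipschitz on $Y$ since $\lVert\mathds 1_h\rVert_{\Lip}\leq 2$.

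Condition (v3) then follows immediately by bounding
\[
\lvert v_\Psi(x,\lambda)\rvert \leq \sum_{h,k\in U}\lambda_k \int_{\R{d}} M(1+|x|+|x'|)\,\mathrm{d}\mu^h_\Psi(x'),
\]
using $\sum_k \lambda_k=1$ and the three preliminary bounds. For (v1), fix $R>0$, $\Psi\in\cP(B^Y_R)$ and $(x^1,\lambda^1),(x^2,\lambda^2)\in B^Y_R$; then $\mu^h_\Psi$ is supported in $B_R$, so I split
\[
v_\Psi(x^1,\lambda^1)-v_\Psi(x^2,\lambda^2)=\sum_{h,k}\bigl[(\lambda^1_k-\lambda^2_k)(K^{hk}\star\mu^h_\Psi)(x^1)+\lambda^2_k\bigl((K^{hk}\star\mu^h_\Psi)(x^1)-(K^{hk}\star\mu^h_\Psi)(x^2)\bigr)\bigr].
\]
The first summand is controlled by $\lVert\lambda^1-\lambda^2\rVert_{\BL}$ via $|\lambda^1_k-\lambda^2_k|\leq 2\lVert\lambda^1-\lambda^2\rVert_{\BL}$ and the uniform bound $|(K^{hk}\star\mu^h_\Psi)(x^1)|\leq M(1+2R)$; the second is controlled by $|x^1-x^2|$ using the local Lipschitz estimate of $K^{hk}$ on $B_{2R}$.

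For (v2), I will rewrite
\[
(K^{hk}\star\mu^h_{\Psi^1})(x)-(K^{hk}\star\mu^h_{\Psi^2})(x)=\int_{\R{d}\times\cP_1(U)} \lambda'_h K^{hk}(x-x')\,\mathrm{d}(\Psi^1-\Psi^2)(x',\lambda')
\]
via the duality relation above, and then estimate the right-hand side by the Kantorovich--Rubinstein formula for $\cW_1$. This requires showing that, for $(x,\lambda)\in B^Y_R$ fixed, the test function $(x',\lambda')\mapsto \lambda'_h K^{hk}(x-x')$ is Lipschitz on $B^Y_R$ with respect to $\lVert\cdot\rVert_{\overline Y}$, with a constant of the form $L_{2R}+2M(1+2R)$ controlling separately the $x'$-dependence (through the Lipschitz estimate on $K^{hk}$) and the $\lambda'_h$-dependence (through $\lVert \mathds{1}_h\rVert_{\Lip}$ and the uniform bound on $K^{hk}$). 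The only slightly delicate bookkeeping lies in this last step, since the Lipschitz norm of a product has to be decomposed so that each factor is tested against the appropriate bound; once this is done, all of (v1)--(v3) follow at once by summing the finitely many contributions over $h,k\in U$.
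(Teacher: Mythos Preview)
Your proof is correct and follows exactly the approach the paper indicates: the paper's own proof of this proposition is the single sentence ``The result follows by a direct computation,'' and you have simply carried out that computation in full, using the duality relation \eqref{T589} and the estimates \eqref{T172a} precisely as intended.
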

\begin{proof}
The result follows by a direct computation.
\end{proof}
Remark~\ref{T916} applies in this case as well, so that the velocity field $v_\Psi$ defined in \eqref{T914} can be interpreted as an average velocity of the system, weighted by the probability that a particle at $x$ has of having label $k$.

Proposition~\ref{T918} concerning transition rates $\alpha_h$'s depending on the $\mu_\Psi^h$'s and explicitly on the space variable $x$ is generalized to the following.
\begin{proposition}\label{T919}
Let $U=\{1,\ldots,H\}$. 
Consider functions $\alpha_{hk}\colon \mathbb{R}^d \times \big(\mathcal{M}_+( \mathbb{R}^d)\big)^H% \times \mathcal{M}_+( \mathbb{R}^d)
\to [0, +\infty)$, for $h,k\in U$, satisfying the following assumptions:
\begin{itemize}
\item there exists a constant $M$ such that, for all $h,k\in U$, 
\begin{equation}\label{T920}
0\leq \alpha_{hk}(x,\mu^1,\ldots, \mu^H)\leq M\bigg(1+|x|+\sum_{l\in U}m_1(\mu^l)\bigg)
\end{equation}
for all $ x\in \mathbb{R}^d$ and $(\mu^1,\ldots,\mu^H) \in \big(\mathcal{M}_+( \mathbb{R}^d)\big)^H$;
\item for all $R>0$, there exist a constant $L_R$ such that, for all $h,k\in U$,
\begin{equation}\label{T921}
|\alpha_{hk}(x_1,\mu_1^1,\ldots,\mu_1^H)-\alpha_{hk}(x_2,\mu_2^1,\ldots,\mu_2^H)|\le L_R\bigg(|x_1-x_2|+\sum_{l\in U}\lVert \mu_1^l-\mu_2^l\rVert_{BL}\bigg)
\end{equation}
for all $x_1$, $x_2 \in B_R$ and $(\mu_1^1,\ldots,\mu_1^H),(\mu_2^1,\ldots, \mu_2^H) \in \big(\mathcal{M}_+( B_R)\big)^H$.
\end{itemize}
For $\Psi \in \cP_1(\mathbb{R}^d \times \cP_1(U))$, define $\mu^h_\Psi$ as in \eqref{T913}, for $h\in U$. %-\eqref{T170}. 
Then, the functions
\begin{equation}\label{T922}
\alpha_{hk}(x, \Psi):=\alpha_{hk}(x, \mu^1_\Psi,\ldots, \mu^H_\Psi), \quad \mbox{for }h,k\in U% \quad  \alpha_L(x, \Psi):=\alpha_L(x, \mu^F_\Psi, \mu^L_\Psi)
\end{equation}
satisfy Assumptions ($\bar\alpha$0)-($\bar\alpha2$) in Proposition \ref{T901}.
\end{proposition}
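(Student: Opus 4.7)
The plan is to verify each of ($\bar\alpha$0)-($\bar\alpha$2) for the functions $\alpha_{hk}(x,\Psi)$ defined via \eqref{T922} by reducing them to their abstract counterparts \eqref{T920}-\eqref{T921} via two auxiliary estimates on the marginals $\mu^h_\Psi$ that generalize \eqref{T172} to the multi-label setting.

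First I would establish, for every $\Psi, \Psi_1, \Psi_2 \in \cP_1(\R{d}\times\cP_1(U))$ and every $h \in U$, the two bookkeeping inequalities
\begin{equation*}
\sum_{h\in U} m_1(\mu^h_\Psi) \leq m_1(\Psi), \qquad \lVert \mu^h_{\Psi_1}-\mu^h_{\Psi_2}\rVert_{\BL} \leq 2\,\cW_1(\Psi_1,\Psi_2).
\end{equation*}
The first is immediate from \eqref{T913} and the identity $\sum_h \lambda_h =1$ for $\lambda\in\cP(U)$. For the second, given a test function $\varphi \in \Lip_b(\R{d})$ with $\|\varphi\|_\infty+\Lip(\varphi)\leq 1$, I would introduce $\phi(x,\lambda):=\lambda_h\,\varphi(x)$ on $Y$ and check that its $\BL$-norm (with respect to $\|\cdot\|_{\overline Y}$) is at most $2$, using that $\lambda\mapsto \lambda_h$ is $1$-Lipschitz on $\cF(U)$ and bounded by $1$ on $\cP(U)$. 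Testing the Kantorovich-Rubinstein duality for $\cW_1(\Psi_1,\Psi_2)$ against $\phi/2$ then gives the claimed $\BL$-bound for $\mu^h_{\Psi_1}-\mu^h_{\Psi_2}$.

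With these inequalities in hand, ($\bar\alpha$1) is immediate: substituting $\mu^l=\mu^l_\Psi$ in \eqref{T920} and using the first inequality produces the bound with the same structural constant $M_\cT=M$, up to an $H$-dependent factor. For ($\bar\alpha$2), I would substitute $\mu_i^l=\mu^l_{\Psi_i}$ in \eqref{T921} and then apply the second inequality to each of the $H$ summands $\lVert \mu^l_{\Psi_1}-\mu^l_{\Psi_2}\rVert_{\BL}$, producing a new constant $L_{\cT,R}$ that depends linearly on $H$ and $L_R$. Property ($\bar\alpha$0) is structural and holds by assumption at the level of the abstract functions $\alpha_{hk}$ on $\R{d}\times(\cM_+(\R{d}))^H$, so it is preserved under the substitution $(\mu^1,\dots,\mu^H)=(\mu^1_\Psi,\dots,\mu^H_\Psi)$; in particular, the row-sum constraint \eqref{T903} transfers directly.

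There is no serious obstacle here — this is essentially a direct computation, in the same spirit as the proof of Proposition~\ref{T918}. The only mildly delicate point is producing the factor $2$ in the $\BL$-estimate for $\mu^h_{\Psi_1}-\mu^h_{\Psi_2}$: one must track carefully that the $\BL$-norm of the product $\lambda_h\varphi(x)$ on $\overline Y=\R{d}\times\cF(U)$ picks up both the sup-norm and the Lipschitz constant of $\varphi$, and that the coordinate evaluation $\lambda\mapsto\lambda_h$ contributes its own $\BL$-norm. Once this is correctly accounted for, all three properties follow with constants depending only on $M$, $L_R$, and $H$, concluding the proof.
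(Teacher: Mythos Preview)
Your proposal is correct and follows essentially the same route as the paper: the paper's proof also reduces everything to the two auxiliary inequalities $\sum_{l\in U} m_1(\mu^l_\Psi)\le m_1(\Psi)$ and $\lVert \mu^h_{\Psi_1}-\mu^h_{\Psi_2}\rVert_{\BL}\le 2\,\cW_1(\Psi_1,\Psi_2)$, stated as the direct generalization of \eqref{T172}, and then invokes \eqref{T920}--\eqref{T921}. You supply more detail on why the factor $2$ appears in the second inequality, which the paper leaves implicit, but the argument is the same.
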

\begin{proof} 
The result follows from \eqref{T920} and \eqref{T921} by means of the following  inequalities
\begin{equation}\label{T172a}
\begin{split}
\sum_{l\in U}m_1(\mu^l_\Psi)&\le m_1(\Psi)\,,\quad \hbox{for all }\Psi \in   \cP_1(\mathbb{R}^d \times \cP_1(U)),\\
\lVert \mu_{\Psi_1}^h-\mu_{\Psi_2}^h\rVert_{BL} &\le 2 \mathcal{W}_1(\Psi_1,  \Psi_2) \quad \hbox{for all }\Psi_1, \Psi_2 \in   \cP_1(\mathbb{R}^d \times \cP_1(U)),\;h\in U,
\end{split}
\end{equation}
which are a straightforward generalization of those in \eqref{T172}.
\end{proof}

Theorem~\ref{T187} can be generalized to the case of a finite discrete space of labels $U=\{1,\ldots,H\}$.
To obtain system \eqref{eq:macroleadfollstrong} in the current context, we have to assume that the interaction kernels $K^{hk}\colon\R{d}\to\R{d}$, for $h,k\in U$ introduced in Proposition~\ref{T914} are such that 
\begin{equation}\label{T923}
K^{hk}=K^h,\qquad\text{for all $h\in U$,}
\end{equation}
for $H$ kernels $K^h\colon \R{d}\to\R{d}$. In this case, analogously to the case with two labels, the velocity field $v_\Psi$ defined in \eqref{T915} does not depend on $\lambda$ anymore and has the form
\begin{equation}\label{T924}
v_\Psi(x)=\sum_{h\in U} K^h\star \mu_\Psi^h,
\end{equation}
where the $\mu_\Psi^h$'s are defined in \eqref{T913}.
Then, system \eqref{eq:macroleadfollstrong} becomes a set of $H$ equations
\begin{equation}\label{T925}
\partial_t \mu_t^h = -\div\bigg(\bigg(\sum_{k\in U} K^k\star \mu_t^k\bigg)\mu_t^h\bigg)-\alpha_{hh}(x,\mu_t^1,\ldots,\mu_t^H)\mu_t^h + \sum_{k\neq h}\alpha_{kh}(x,\mu_t^1,\ldots,\mu_t^H)\mu_t^k,
\end{equation}
to be solved for Borel measures $\mu_h$ such that, at the initial time $t=0$, $\mu_0^h=\bar\mu^h$, for $h\in U$, where $\bar\mu^1,\ldots,\bar\mu^H$ are given Borel measures satisfying
\begin{equation}\label{T926}
\sum_{h\in U} \bar\mu^h(\R{d})=1.
\end{equation}
We give the following
\begin{defin}[Solution to system \eqref{T925}]\label{T927}
	Let $(\bar{\mu}^1,\ldots,\bar{\mu}^H)\in\big(\mathcal{M}_c(\mathbb{R}^d)\big)^H$ be given such that \eqref{T926} is satisfied, as well as $\mu^1,\ldots,\mu^H\colon [0,T]\rightarrow\mathcal{M}_c(\mathbb{R}^d)$. We say that $(\mu^1_t,\ldots,\mu^H_t)$ is a solution to system \eqref{T925} with initial datum $(\bar{\mu}^1,\ldots,\bar{\mu}^H)$ when
	\begin{enumerate}
		\item $\mu^h_0 = \bar{\mu}^h$ for each $h\in U$;
		\item for each $h\in U$, the function $t\to \mu^h_t$ is continuous with respect to the topology of weak convergence of measures;
		\item there exists $R_T > 0$ such that $\bigcup_{t \in [0,T]}\supp(\mu^h_t)\subseteq B_{R_T}$ for every $h\in U$;
		\item for every $\varphi \in \mathcal{C}^1_c(\mathbb{R}^d)$ and $h \in U$ it holds
		\begin{align*}\begin{split}
		\frac{\de}{\de t}\int_{\mathbb{R}^d} & \varphi(x)\,\de\mu^h_t(x) = \int_{\mathbb{R}^d}\nabla\varphi(x)\cdot\bigg(\sum_{k\in U}(K^{k}\star\mu^k_t)(x)\bigg)\de\mu^h_t(x) \\
		&-\int_{\mathbb{R}^d}\varphi(x)\alpha_{hh}(x, \mu^1_t,\ldots,\mu^H_t)\,\mathrm{d}\mu^h_t(x)+\int_{\mathbb{R}^d}\varphi(x)\sum_{k\neq h}\alpha_{kh}(x,\mu^1_t,\ldots,\mu^H_t)\,\mathrm{d}\mu^{k}_t(x),
		\end{split}
		\end{align*}
		for almost every $t\in[0,T]$. %, with
%		\begin{align*}
%		\neg i \coloneqq \begin{cases}
%		L & \text{ if } i = F,\\
%		F & \text{ if } i = L.
%		\end{cases}
%		\end{align*}
	\end{enumerate}
\end{defin}
We are now in a position to prove the most important result of this section, namely how a solution to \eqref{T925} follows from Theorem~\ref{T150}.
\begin{theorem}\label{T928}
Let $U=\{1,\ldots,H\}$. 
Consider  functions $\alpha_{hk}\colon \mathbb{R}^d \times \big(\mathcal{M}_+( \mathbb{R}^d)\big)^H\to [0, +\infty)$ satisfying \eqref{T920} and \eqref{T921} and $H$ kernels $K^h\colon \mathbb{R}^d \to \mathbb{R}^d$ with 
\begin{equation}\label{T186}
\begin{split}
\sum_{h\in U} |K^{h}(x)|&\le M(1+|x|),\quad \mbox{for all }x \in \mathbb{R}^d;
\\
\sum_{h\in U} |K^{h}(x_1)-K^{h}(x_2)|&\le L_R\,|x_1-x_2|,\quad \mbox{for all }x_1, x_2 \in B_R\,.
\end{split}
\end{equation}
For $\Psi \in \cP_1(\mathbb{R}^d \times \cP_1(U))$, define the $\mu^h_\Psi$'s as in \eqref{T913}. %-\eqref{T170}. 
For $x \in \mathbb{R}^d$ and $\Psi \in \cP_1(\mathbb{R}^d \times \cP_1(U))$, let  $v_\Psi(x)$  and $\mathcal{T}(x, \Psi)$ be given by \eqref{T924}, and \eqref{T902}, %and \eqref{T177}, 
respectively, and consider the corresponding velocity field $b_\Psi$ as in \eqref{T034}.

Then, if $\Lambda \in C([0, T];\cP_1(\mathbb{R}^d \times \cP_1(U), \mathcal{W}_1)$ is the unique solution to \eqref{T140} starting from $\overline{\Lambda}\in \cP_c(\mathbb{R}^d \times \cP_1(U)$, the measures $\mu^h_t:=\mu^h_{\Lambda_t}$, for $h\in U$, %and $\mu^L_t:=\mu^L_{\Lambda_t}$ 
are the unique solutions to \eqref{T925} with initial data $\bar \mu^h=\mu^h_{\bar\Lambda}$, for $h\in U$.
\end{theorem}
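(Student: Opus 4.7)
The plan is to follow the two-step structure of the proof of Theorem~\ref{T187}, which adapts directly once the combinatorial bookkeeping for $H$ labels is organized. The preliminary results of Propositions~\ref{T901}, \ref{T914}, \ref{T919} already guarantee that $b_\Psi$ satisfies (v1)--(v3) and (T0)--(T3), so Theorem~\ref{T150} applies and the existence and uniqueness of $\Lambda$ is automatic.

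\smallskip
\textbf{Existence.} I would set $\mu^h_t:=\mu^h_{\Lambda_t}$ for every $h\in U$ and verify conditions (1)--(4) of Definition~\ref{T927}. Conditions (1) and (3) follow from the definition \eqref{T913} and Lemma~\ref{T162}; (2) is a direct consequence of the second inequality in \eqref{T172a} and of $\Lambda\in C^0([0,T];(\cP_1(Y),\cW_1))$. For (4), I would fix $h\in U$ and $\varphi\in C^1_c(\R d)$ and plug the test function $\phi(x,\lambda)\coloneqq\lambda_h\varphi(x)$ into the Eulerian identity \eqref{T037}: the $\R d$-component of $D\phi$ is $\lambda_h\nabla\varphi(x)$, while the action of the $\cF(U)$-component on $\xi$ is simply $\xi\mapsto\varphi(x)\xi_h$, independent of $\lambda$ by linearity. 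Using the duality relation (the direct generalization of \eqref{T589})
\[
\int_{\R d\times\cP_1(U)}\lambda_h\zeta(x)\,\de\Psi(x,\lambda)=\int_{\R d}\zeta(x)\,\de\mu^h_\Psi(x),\qquad\zeta\in C_b(\R d),
\]
applied to $\zeta=\varphi$ and $\zeta=\nabla\varphi\cdot v_{\Lambda_t}$ (which is again a pure function of $x$, thanks to \eqref{T924}), together with the explicit form of $\cT^*$ given by (\bar\alpha 0) in Proposition~\ref{T901}, one identifies the right-hand side of \eqref{T037} with the right-hand side of (4) in Definition~\ref{T927}.

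\smallskip
\textbf{Uniqueness.} Step~1 of the uniqueness part of Theorem~\ref{T187} — continuous dependence for the auxiliary inhomogeneous continuity equation \eqref{T182}, culminating in the estimate \eqref{T184} — is completely label-agnostic and can be quoted unchanged. For Step~2, I would take two solutions $(\mu^h_t)_{h\in U}$, $(\nu^h_t)_{h\in U}$ of \eqref{T925} emanating from the same initial datum. The key algebraic observation is that, summing \eqref{T925} over $h$ and using \eqref{T903} in the form
\[
\sum_{h\in U}\alpha_{hh}(x,\mu^1_t,\ldots,\mu^H_t)\mu^h_t=\sum_{h\in U}\sum_{k\neq h}\alpha_{hk}(x,\mu^1_t,\ldots,\mu^H_t)\mu^h_t=\sum_{k\in U}\sum_{h\neq k}\alpha_{kh}(x,\mu^1_t,\ldots,\mu^H_t)\mu^k_t,
\]
the source terms cancel and $\sum_h\mu^h_t$ satisfies a pure continuity equation with compactly supported initial datum of total mass $1$; hence $\sum_h|\mu^h_t|\equiv 1$ and in particular each $|\mu^h_t|\leq 1$ (analogue of \eqref{T185}). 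Next I would note that each $\mu^h_t$ solves \eqref{T182} with the common velocity field $v_t=\sum_{k\in U}K^k\star\mu^k_t$ and source $\xi^h_t=-\alpha_{hh}(\cdot,\mu^1_t,\ldots,\mu^H_t)\mu^h_t+\sum_{k\neq h}\alpha_{kh}(\cdot,\mu^1_t,\ldots,\mu^H_t)\mu^k_t$, and similarly for $\nu^h_t$ with $w_t$ and $\sigma^h_t$. Using \eqref{T186}, \eqref{T920}, \eqref{T921} and the uniform mass bound, one obtains
\[
\|v_t-w_t\|_{C_b(\R d)}+\sum_{h\in U}\|\xi^h_t-\sigma^h_t\|_{\BL}\leq C\sum_{k\in U}\|\mu^k_t-\nu^k_t\|_{\BL},
\]
with $C=C(T,R_T,M,H)$. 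Feeding this into \eqref{T184} for every $h$, summing over $h\in U$, taking suprema on $[0,\tau]$ and iterating a finite number of times on subintervals of length $1/(2CH)$ gives $\mu^h_t=\nu^h_t$ for every $h\in U$ and every $t\in[0,T]$.

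\smallskip
The main obstacle is not analytical but bookkeeping: one has to make sure that the identity \eqref{T903} (equivalently $\cT(x,\Psi)1=0$) is used correctly to preserve total mass, since without this cancellation the auxiliary-equation scheme of Step~1 would no longer deliver the uniform a~priori bound on $|\mu^h_t|$ needed to close the Gr\"onwall argument.
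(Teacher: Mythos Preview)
Your proposal is correct and follows exactly the approach the paper intends: the paper's proof of Theorem~\ref{T928} is the one-line statement ``The proof is analogous to that of Theorem~\ref{T187}'', and you have accurately written out what that analogous argument is, including the existence part via the test functions $\phi(x,\lambda)=\lambda_h\varphi(x)$, the label-agnostic auxiliary estimate \eqref{T184}, and the mass-conservation step using \eqref{T903} to obtain the analogue of \eqref{T185}.
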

\begin{proof}
The proof is analogous to that of Theorem~\ref{T187}
\end{proof}

It is useful to recall the notion of $Q$-matrix from the literature of Markov chains (see, e.g., \cite[Chapter~2]{Norris1997}).
Let $Q$ be a $H\times H$ matrix; the element $q_{hk}$ represents the transition rate from state $h$ to $k$.
Such a matrix is called $Q$-matrix satisfies the following conditions
\begin{itemize}
\item[(Q1)] $q_{hk}\geq0$, for all $h,k\in U, h \neq k$;
\item[(Q2)] $\sum_{k\in U} q_{hk}=0$, for all $h\in U$.
\end{itemize}
It is customary, in the literature on Markov chains, to complement these conditions by
\begin{itemize}
\item[(Q0)] $0\leq-q_{hh}<+\infty$, for all $h\in U$,
\end{itemize}
even though condition (Q0) is a consequence of (Q1) and (Q2).
\begin{remark}\label{T910}
It is easy to see that the matrix in \eqref{T902}, representing the operator $\cT(x,\Psi)$ for all $(x,\Psi)\in \R{d}\times\cP_1(Y)$ is a $Q$-matrix: indeed, condition \eqref{T903} is equivalent to (Q2) and ($\bar\alpha$1) is equivalent to (Q1).
\end{remark}
For $(x,\Psi)\in\R{d}\times\cP_1(\R{d}\times\cP_1(U))$, and for fixed initial conditions $(x^0,\lambda^0)\in Y$, the dynamics described by the vector field $b_\Psi$ defined in \eqref{T034} is 
\begin{equation}\label{T930}
\vec{\dot x}{\dot\lambda}=b_\Psi(x,\lambda)=\vec{v_\Psi(x)}{\cT^*(x,\Psi)\lambda},\qquad\vec{x}{\lambda}(0)=\vec{x^0}{\lambda^0}.
\end{equation}
Concerning the second equation, $\dot\lambda=\cT^*(x,\Psi)\lambda$, this is a linear equation in the space of measures, which can be easily integrated to give
\begin{equation}\label{T931}
\lambda(t)=\lambda_t=\cS_t(x,\Psi)\lambda^0,\qquad\text{with}\quad \cS_t(x,\Psi)\coloneqq e^{t\cT(x,\Psi)}.
\end{equation}
A classical result in the theory of Markov chains, \cite[Theorem~2.1.2]{Norris1997} grants that the fact that $\cT(x,\Psi)$ satisfies (Q1)-(Q2) is equivalent to the matrix $\cS_t(x,\Psi)$ being a \emph{stochastic matrix} for all times $t\geq0$, namely a matrix $S\in\R{H\times H}$ that satisfies
\begin{itemize}
\item[(S1)] $0\leq S_{kh}<+\infty$ for all $h,k\in U$;
\item[(S2)] $\sum_{h\in U} S_{kh}=1$ for all $k\in U$.
\end{itemize}
We remark that the explicit solution given by \eqref{T931} is available only for fixed $(x,\Psi)$, so that the matrix \eqref{T902} is constant and its exponential can be computed. 
This would restrict the dynamics to constant solutions $x(t)=x^0$ for all $t$ and to a constant distribution $\Psi\in\cP(Y)$.

\subsection{A continuum of labels}\label{continuousU}
We now turn to the case in which $U$ is a compact metric space, as in the general theory developed in Section~\ref{sec:abstract_mod}, and that it is a continuum.
We will shortly present some possible expressions of the velocity field $b_\Psi$ defined in \eqref{T034} in which both components feature a two-player game that determines the evolution.

Let $\bar y\in Y$ and let $\cK\colon Y\times Y\to \R{d}\times\cF(U)$ be an interaction kernel such that, for $\Psi\in\cP(Y)$,
\begin{equation}\label{T932}
b_\Psi(y)=\int_Y \cK(y,y')\,\de\Psi(y').
\end{equation}
Our aim is to study system \eqref{T141} where the evolution is driven by the field $b_\Psi$ defined in \eqref{T932}.
%for a given initial condition $\bar y\in Y$.
Recalling \eqref{T034}, let  
%Denoting by 
$\cK_x\colon Y\times Y\to\R{d}$ and $\cK^*_\lambda\colon Y\times Y\to\cF(U)$ be the components of $\cK$. Then %, and recalling the structure \eqref{T034} of $b_\Psi$, 
\eqref{T932} gives
\begin{equation}\label{T933}
v_\Psi(y)=\int_Y \cK_x(y,y')\,\de\Psi(y'),\qquad\text{and}\qquad \cT^*(x,\Psi)\lambda=\int_Y \cK^*_\lambda(y,y')\,\de\Psi(y').
\end{equation}
We furthermore assume that each of $\cK_x$ and $\cK^*_\lambda$ accounts for an averaging over all the strategies in $U$.
To this aim, let $V\colon (\R{d}\times U)^2\to\R{d}$ be such that
\begin{equation}\label{T030x}
\cK_x(y,y')=\int_U\int_U V(x,u,x',u')\,\de\lambda'(u')\de\lambda(u)
\end{equation}
The interpretation of the field $V$ is the following: the quantity 
$$V(x,u,x',u')\in\R{d}$$
gives the direction that the payer at $x$ should follow if it has label/strategy $u$ when playing a game against a player at $x'$ with label/strategy $u'$.
Keeping the structure \eqref{T141}, \eqref{T933}, and \eqref{T030x} in mind, the full law of motion for the variable $x$ is given by
\begin{equation}\label{T011}
\dot x=v_\Psi(y)= \int_U\int_Y\int_U V(x,u,x',u')\,\de\lambda'(u')\de\Psi(x',\lambda')\de\lambda(u),
\end{equation}
which can be interpreted in the following way: the player at $x$ with label/strategy $u$ plays a game with all the other players at $x'$ with their labels/strategies $u'$ and the velocity $\dot x_t$ is determined by averaging over all the possible strategies of the opponent (the integral with respect to $\de\lambda'(u')$), over all the possible distributions of opponents with their distributions of strategies (the integral with respect to $\de\Lambda(x',\lambda')$), and finally over all the possible strategies that $x$ have at their disposal (the integral with respect to $\de\lambda(u)$).

For $(x,\Psi)\in\R{d}\times\cP_1(U)$ define the operator $\cT(x,\Psi)\colon \Lip(U)\to\Lip(U)$ by
\begin{equation}\label{T951}
(\cT(x,\Psi)f)(u)\coloneqq \int_Y\int_U J(x,u,x',u')f(u')\,\de\lambda'(u')\de\Psi(x',\lambda')-g(x,\Psi)(u)f(u),
\end{equation}
where $J\colon (\R{d}\times U)^2\to \R{}$, and the function $g(x,\Psi)\in C(U)$ can be interpreted as a \emph{global departure rate} from $u$.
\begin{proposition}\label{T950}
Let $\Psi\in\cP_1(Y)$,
let $V\colon (\R{d}\times U)^2\to\R{d}$, $J\colon (\R{d}\times U)^2\to \R{}$, and $g(\cdot,\Psi)\in C(U)$ be such that
\begin{itemize}
\item[(V1)] $V$ is locally Lipschitz with respect to all of its variables;
\item[(V2)] $V$ is sublinear in the spatial variables, namely there exists $C_V>0$ such that
$$|V(x,u,x',u')|\leq C_V(1+\lvert x\rvert+\lvert x'\rvert),\qquad\text{for all $(x,u),(x'u')\in(\R{d}\times U)^2$;}$$
\item[(J1)] $J$ is locally Lipschitz with respect to all of its variables;
\item[(J2)] $J$ is sublinear in the spatial variables, namely there exists $C_J>0$ such that
$$|J(x,u,x',u')|\leq C_J(1+\lvert x\rvert+\lvert x'\rvert),\qquad\text{for all $(x,u),(x'u')\in(\R{d}\times U)^2$.}$$
\end{itemize}
Then, the field $v_\Psi\colon Y\to\R{d}$ given by\eqref{T933} via \eqref{T030x} satisfies (v1)-(v3) and the operator $\cT(x,\Psi)\colon\Lip(U)\to\Lip(U)$ given by \eqref{T951} satisfies (T1)-(T3).
If, moreover,
\begin{itemize}
\item[(g)] for $(x,\Psi)\in\R{d}\times\cP_1(Y)$, 
\begin{equation}\label{T952}
g(x,\Psi)(u)=\int_Y\int_U J(x,u,x',u')\,\de\lambda'(u')\de\Psi(x',\lambda'),
\end{equation}
\end{itemize}
then (T0) is also satisfied.
In this case, the adjoint operator $\cT^*(x,\Psi)\colon\cF(U)\to\cF(U)$ is represented as in \eqref{T933}, by means of $\cK^*_\lambda\colon Y\times Y\to\cF(U)$ such that, for $f\in\Lip(U)$, 
\begin{equation}\label{T954}
\begin{split}
\langle \cK^*_\lambda(y,y'),f\rangle %} \,\de\Psi(y')
= & \int_U\int_U J(x,u,x',u')[f(u')-f(u)]\,\de\lambda'(u')\de\lambda(u). %\bigg]}\de\Psi(x',\lambda').
\end{split}
\end{equation}
\end{proposition}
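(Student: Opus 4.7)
The plan is to verify (v1)--(v3) and (T0)--(T3) by direct computation from the integral representations \eqref{T030x} and \eqref{T951}, using Fubini freely and the BL--Lip duality together with the Kantorovich--Rubinstein characterization of $\cW_1$. The sublinearity (v3) is immediate: applying (V2) inside the triple integral yields $|v_\Psi(y)|\le C_V(1+|x|+m_1(\Psi))$, since $m_1(\Psi)\ge\int|x'|\,\de\Psi$ and $\lambda,\lambda'$ have unit mass. For (v1) I would split $v_\Psi(y^1)-v_\Psi(y^2)$ into an $x$-variation, controlled by the local Lipschitz bound from (V1), and a $\lambda$-variation: by BL-duality, the latter is bounded by $\lVert\lambda^1-\lambda^2\rVert_{\BL}$ times the Lip-norm of the function $u\mapsto\int_Y\int_U V(x^2,u,x',u')\,\de\lambda'\de\Psi$, which is controlled uniformly in terms of $R$ when $\Psi\in\cP(B_R^Y)$ (so that $m_1(\Psi)\le R$) and $x^2\in B_R$. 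Assumption (v2) is handled symmetrically: for fixed $y,u$, the map $y'=(x',\lambda')\mapsto\int_U V(x,u,x',u')\,\de\lambda'(u')$ is Lipschitz on $B_R^Y$, so $|v_{\Psi^1}(y)-v_{\Psi^2}(y)|\le L_R\cW_1(\Psi^1,\Psi^2)$ follows from Kantorovich duality.

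Under (g), property (T0) is a one-line check: $(\cT(x,\Psi)\,1)(u)=\int J\cdot 1\,\de\lambda'\de\Psi-g(x,\Psi)(u)=0$. Properties (T1) and (T2) are verified by the same template as (v1)--(v3), now applied to the linear map $f\mapsto\cT(x,\Psi)f$: one estimates both $\lVert\cT f\rVert_\infty$ and $\Lip(\cT f)$ in terms of $\lVert f\rVert_{\Lip}$ and $C_J(1+|x|+m_1(\Psi))$, and the Lipschitz dependence of $\cT$ on $(x,\Psi)\in B_R\times\cP(B_R^Y)$ is obtained from (J1) and (J2). For (T3), the decisive observation is that, under the natural sign convention $J\ge 0$ (consistent with the role of $J$ as a transition rate, cf.\ the discrete analogue in Proposition~\ref{T901} where the $\alpha_{hk}$ are nonnegative), for every $f\ge 0$ we have
\begin{equation*}
(\cT(x,\Psi)f)(u)+\delta_R f(u)=\int_Y\!\int_U J(x,u,x',u')f(u')\,\de\lambda'\de\Psi+\bigl(\delta_R-g(x,\Psi)(u)\bigr)f(u),
\end{equation*}
which is $\ge 0$ as soon as $\delta_R\ge\sup_{u\in U}g(x,\Psi)(u)$; the latter is bounded by $C_J(1+R+m_1(\Psi))$, and hence uniformly by a constant depending only on $R$ on the restricted class $\cP(B_R^Y)$ that is actually needed in Proposition~\ref{T105}(iii).

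Finally, the representation \eqref{T954} for $\cK^*_\lambda$ is obtained by computing $\langle\cT^*(x,\Psi)\lambda,f\rangle=\langle\lambda,\cT(x,\Psi)f\rangle$, substituting \eqref{T951} for $\cT f$, and applying Fubini: the first summand produces $\int_Y\!\int_U\!\int_U J\,f(u')\,\de\lambda'(u')\de\lambda(u)\de\Psi(y')$, while the second summand, once (g) is inserted to expand $g(x,\Psi)(u)$, produces $\int_Y\!\int_U\!\int_U J\,f(u)\,\de\lambda'(u')\de\lambda(u)\de\Psi(y')$; their difference is exactly the antisymmetric integrand $f(u')-f(u)$ in \eqref{T954}. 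The main obstacle will be pinning down the correct hypotheses for (T3): the sign constraint on $J$ is not explicitly stated in the proposition but seems indispensable, and the uniformity of $\delta_R$ over $\Psi\in\cP_1(Y)$ (as literally written in (T3)) requires either a boundedness assumption on $J$ or a restriction of the class of $\Psi$'s consistent with how (T3) is actually invoked downstream; all other estimates reduce to routine integral manipulations once the proper pairings are set up.
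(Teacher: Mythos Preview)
Your approach is the same as the paper's, which is extremely terse: it merely asserts that (v1)--(v3) and (T1)--(T2) follow from ``standard estimates'' using the integral structure and the $\BL$/$\Lip$ duality, and that (T3) ``follows from the boundedness of $J(\cdot,\cdot,x',u')$ and $g(\cdot,\Psi)$''. Your write-up is in fact more careful than the paper's on every point.

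In particular, your hesitation about (T3) is well-founded and the paper glosses over precisely the difficulty you identify. Boundedness alone does \emph{not} give (T3): if $J$ is allowed to take negative values, then for $f\ge 0$ the integral term $\int_Y\int_U J(x,u,x',u')f(u')\,\de\lambda'\de\Psi$ can be negative at a point $u$ where $f(u)=0$, and no choice of $\delta_R$ repairs this. So the nonnegativity $J\ge 0$ (which is natural for a transition kernel and is explicitly required in the discrete analogue, cf.\ Proposition~\ref{T901}) must be assumed, exactly as you surmise. Likewise, your remark that the literal quantifier ``$\Psi\in\cP_1(Y)$'' in (T3) is too strong for this $\cT$ is correct: $g(x,\Psi)$ grows with $m_1(\Psi)$ via (J2), so $\delta_R$ uniform over all of $\cP_1(Y)$ cannot exist; what is actually used downstream (Proposition~\ref{T105}(iii)) only needs $\Psi\in\cP(B_R^Y)$, and your argument covers that case. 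The paper's proof simply does not address either point.
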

\begin{proof}
Let $R>0$ be fixed and let $L_{V,R}$ be the Lipschitz constant of $V$ on $(B_R\times U)^2$. 
Properties (v1)-(v2) are obtained from (V1) by standard estimates keeping the definition of $\BL$ norm and the structure \eqref{T933}-\eqref{T030x} into account; the constant $L_{v,R}$ of (v1)-(v2) is determined by $L_{V,R}$ and $\diam U$.
Property (v3) follows from (V1) and (V2) and the constant $M_v$ of (v3) is determined by $C_V$.

Analogously, let $L_{J,R}$ be the Lipschitz constant of $J$ on $(B_R\times U)^2$.
Property (T2) is obtained from (J1) by standard estimates keeping the definition of $\BL$ norm and the structure \eqref{T951} into account; the constant $L_{\cT,R}$ of (T2) is determined by $L_{J,R}$ and $\diam U$.
Property (T1) is obtained also using (J2) and the constant $M_\cT$ of (T1) is determined by $C_J$.
Property (T3) follows from the boundedness of $J(\cdot,\cdot,x',u')$ and $g(\cdot,\Psi)$ on $B_R\times U$ and $U$, respectively.

Finally, a simple computation shows that (g) and (T0) are equivalent, so that, using \eqref{T952}, we obtain the following expression for \eqref{T951}
\begin{equation}\label{T953}
(\cT(x,\Psi)f)(u)= \int_Y\int_U J(x,u,x',u')[f(u')-f(u)]\,\de\lambda'(u')\de\Psi(x',\lambda'),\quad\text{for $f\in \Lip(U)$}. %-g(x,\Psi)(u)f(u),
\end{equation}
Recalling, from \eqref{T034} and \eqref{T141}, that $\dot\lambda=\cT^*(x,\Psi)\lambda$, for $f\in\Lip(U)$ we have, using \eqref{T933},
\begin{equation*}
\begin{split}
\langle\dot\lambda,f\rangle = & \langle \cT^*(x,\Psi)\lambda,f \rangle = \langle \lambda, \cT(x,\Psi)f \rangle \\
=& \bigg\langle \lambda,\int_Y\int_U J(x,u,x',u')[f(u')-f(u)]\,\de\lambda'(u')\de\Psi(x',\lambda') \bigg\rangle \\
=& \int_U\int_Y\int_U J(x,u,x',u')[f(u')-f(u)]\,\de\lambda'(u')\de\Psi(x',\lambda')\de\lambda(u) \\
=& \int_Y \bigg[\int_U\int_U J(x,u,x',u')[f(u')-f(u)]\,\de\lambda'(u')\de\lambda(u)\bigg]\de\Psi(x',\lambda'),
\end{split}
\end{equation*}
which is \eqref{T954}.
The proof is complete.
\end{proof}

\begin{remark}\label{differences}
Let us assume that the hypotheses of Proposition~\ref{T950} hold;  the ODE in \eqref{T141}, written component-wise, reads
\begin{equation}\label{T955}
\vec{\dot x}{\dot\lambda}=\vec{v_\Psi(y)}{\cT^*(x,\Psi)\lambda},
\end{equation}
for $y=(x,\lambda)\in Y=\R{d}\times\cP(U)$ and $\Psi\in\cP_1(Y)$.

A comparison with \cite{AFMS2018} is in order. In \cite[equations (1.3), (1.4), and (1.8)]{AFMS2018} the ODE studied is (keeping the notations of \cite{AFMS2018})
\begin{equation}\label{T956}
\vec{\dot x}{\dot\sigma}=\vec{a(y)}{\Delta_{\Sigma,y}\sigma},
\end{equation}
where $y=(x,\sigma)\in C\coloneqq\R{d}\times\cP(U)$ and $\Sigma\in\cP_1(C)$.
The right-hand sides are
\begin{equation}\label{T957}
\begin{split}
a(y) = & \int_U e(x,u)\,\de\sigma(u), \\
\Delta_{\Sigma,y}\sigma = & \bigg(\int_C\int_U J(x,u,x',u')\,\de\sigma'(u')\de\Sigma(x',\sigma') \\
&-\int_U \int_C\int_U J(x,w,x',u')\,\de\sigma'(u')\de\Sigma(x',\sigma')\de\sigma(w)\bigg)\sigma,
\end{split}
\end{equation}
where $e\colon\R{d}\times U\to\R{d}$ and $J\colon(\R{d}\times U)^2\to\R{}$. 
The equation for $\sigma$ in \eqref{T956} is known in the literature of evolutionary games as \emph{replicator equation} (see, e.g., \cite{HS1998}).

In comparison with \eqref{T957}, we notice that the theory developed in Section~\ref{sec:abstract_mod} allows us to deal with a broader class of velocity fields, that include agent interaction. Indeed, the velocity field $a\colon C\to\R{d}$ in \eqref{T957} is linear in the mixed strategy $\sigma$ and depends only on the position $x$ of the agent; it does not depend neither on the global distribution $\Sigma$ of the system nor on any interaction between the agents.
The velocity field $v_\Psi\colon Y\to\R{d}$ in \eqref{T011}, instead, additionally takes into account the global distribution $\Psi$ of the system as well as the interaction between players through the kernel $V$.

The equations for the second component of $y$, namely $\lambda$ and $\sigma$ in \eqref{T955} and \eqref{T956}, respectively, share even fewer features. If, on the one hand, they both depend on the global distribution of the system $\Psi$ or $\Sigma$, on the other hand the second equation in \eqref{T955} is linear in $\lambda$ (see \eqref{T954}), whereas the replicator equation is quadratic in $\sigma$, through the triple integral in \eqref{T957}.
\end{remark}

\bigskip

%%%%%%%%%%%%%%%%%%%%%%%%%%%%%%%%%%%%%%%%%%%%%%%%%%%%%%%%%
%%%%%%%%%%%%%%%%%%%%%%%%%%%%%%%%%%%%%%%%%%%%%%%%%%%%%%%%%
%%%%%%%%%%%%%%%%%%%%%%%%%%%%%%%%%%%%%%%%%%%%%%%%%%%%%%%%%
%%%%%%%%%%%%%%%%%%%%%%%%%%%%%%%%%%%%%%%%%%%%%%%%%%%%%%%%%
%%%%%%%%%%%%%%%%%%%%%%%%%%%%%%%%%%%%%%%%%%%%%%%%%%%%%%%%%

\noindent {\bf Acknowledgments.} 
The authors thank the hospitality of the departments of mathematics of the Politecnico di Torino, of the Universit\`a di Napoli, and of the E.~Schr\"{o}dinger Institute in Vienna, where this research was developed.

The authors are members of the Gruppo Nazionale per l'Analisi Matematica, la Probabi\-lit\`a e le loro Applicazioni (GNAMPA) of the Istituto Nazionale di Alta Matematica (INdAM).

The work of FS is part of the project ``Variational methods for stationary and evolution problems with singularities and interfaces'' PRIN 2017 financed by the Italian Ministry of Education, University, and Research.

\end{document}